\numberwithin{equation}{section}
\newcommand{\modules}{$\text{Proposition 1.7}$}
\newcommand{\invarena}{$\text{Lemma 1.2}$}
\newcommand{\basSKEW}{$\text{Definition 1.24}$}
\newcommand{\EHbases}{$\text{Definition 1.17}$}
\newcommand{\genCa}{$\text{Proposition 3.14}$}
\newcommand{\algebtypes}{$\text{Corollary 3.15}$}
\newcommand{\genCar}{$\text{Corollary 3.17}$}
\newcommand{\zeroprog}{$\text{Theorem 4.3}$}
\newcommand{\symtorscomp}{$\text{Proposition 4.5}$}
\newcommand{\starconnection}{$\text{Theorem 3.8}$}
\newcommand{\starconnectionsp}{$\text{Theorem 3.12}$}
\newcommand{\darbas}{$\text{Definition A.7}$}
\newcommand{\kernel}{$\text{Lemma 3.11}$}
\newcommand{\gradbas}{$\text{Proposition A.9}$}
\newcommand{\homogthem}{$\text{Theorem 5.2}$}
\newcommand{\firstjet}{$\text{Proposition 4.8}$}
\newcommand{\R}{\mathbb{R}}
\newcommand{\Z}{\mathbb{Z}}
\newcommand{\C}{\mathbb{C}}
\newcommand{\Hn}{\mathbb{H}}
\newcommand{\p}{\mathfrak{p}}
\newcommand{\g}{\mathfrak{g}}
\newcommand{\gl}{\mathfrak{gl}}
\renewcommand{\sl}{\mathfrak{sl}}
\renewcommand{\so}{\mathfrak{so}}
\renewcommand{\sp}{\mathfrak{sp}}
\renewcommand{\sc}{\mathsf{sc}}
\renewcommand{\Im}{\mathbbm{Im}}
\DeclareMathOperator{\imm}{\mathsf{Im}}
\DeclareMathOperator{\Tor}{\mathsf{Tor}}
\DeclareMathAlphabet{\mathscrbf}{OMS}{mdugm}{b}{n}
\DeclareMathOperator{\Spin}{\mathsf{Spin}}
\DeclareMathOperator{\SO}{\mathsf{SO}}
\DeclareMathOperator{\Sp}{\mathsf{Sp}}
\DeclareMathOperator{\SU}{\mathsf{SU}}
\DeclareMathOperator{\hor}{\mathsf{hor}}
\DeclareMathOperator{\pul}{\mathsf{pul}}
\DeclareMathOperator{\U}{\mathsf{U}}
\DeclareMathAlphabet{\mathpzc}{OT1}{pzc}{m}{it}
\DeclareMathOperator{\Hh}{\mathsf{H}}
\DeclareMathOperator{\Oo}{\mathsf{O}}
\DeclareMathOperator{\E}{\mathsf{E}}
\DeclareMathOperator{\Gl}{\mathsf{GL}}
\DeclareMathOperator{\Sl}{\mathsf{SL}}
\DeclareMathOperator{\Aut}{\mathsf{Aut}}
\DeclareMathOperator{\id}{\mathsf{id}}
\DeclareMathOperator{\w}{\mathsf{w}}
\DeclareMathOperator{\Ss}{S}
\DeclareMathOperator{\Ed}{\mathsf{End}}
\DeclareMathOperator{\J}{\mathsf{J}}
\DeclareMathOperator{\K}{\mathsf{K}}
\DeclareMathOperator{\Ad}{\mathsf{Ad}}
\DeclareMathOperator{\vol}{\mathsf{vol}}
\DeclareMathOperator{\Id}{\mathsf{Id}}
\newcommand{\fr}{\mathfrak}
\newcommand{\al}{\alpha}
\newcommand{\mc}{\mathcal}
\newcommand{\cc}{\big(}
\newcommand{\CC}{\Big(}
\newcommand{\rr}{\big)}
\newcommand{\RR}{\Big)}
\DeclareMathAlphabet{\mathscrbf}{OMS}{mdugm}{b}{n}
\DeclareMathOperator{\Tr}{\mathsf{Tr}}
\DeclareMathOperator{\ke}{\mathsf{Ker}}
\DeclareMathOperator{\dd}{d}
\newtheorem{theorem}{Theorem}[section]
\newtheorem{lem}[theorem]{Lemma}
\newtheorem{prop}[theorem]{Proposition}
\newtheorem{corol}[theorem]{Corollary}
\newtheorem{rem}[theorem]{Remark}
\theoremstyle{definition}
\newtheorem{defi}[theorem]{Definition}
\newtheorem{example}[theorem]{Example}
\theoremstyle{remark}
\numberwithin{equation}{section}
\definecolor{dark}{rgb}{0.18,0.18,0.68}
\definecolor{mydark}{rgb}{0.78,0.08,0.08}
\definecolor{crew}{rgb}{0.2,0.5,0.2}
\definecolor{mmg}{rgb}{0.31,0.50,0.23}
\definecolor{dblue}{rgb}{0.01,0.01,0.44}
\definecolor{red}{rgb}{0.57,0.11,0.15}
\definecolor{cobalt}{RGB}{61,89,171}
\title[Differential geometry of $\SO^\ast(2n)$-type structures - Integrability]{
Differential geometry of $\SO^\ast(2n)$-type structures - Integrability}
\author{Ioannis Chrysikos} 
\address{Faculty of Science, University of Hradec Kr\'alov\'e, Rokitanskeho 62, Hradec Kr\'alov\'e
50003, Czech Republic}
\email{ioannis.chrysikos@uhk.cz}
\author{Jan Gregorovi\v c} 
\address{Faculty of Science, University of Hradec Kr\'alov\'e, Rokitanskeho 62, Hradec Kr\'alov\'e
50003, Czech Republic}
\email{jan.gregorovic@seznam.cz}
\author{Henrik Winther} 
\address{Department of Mathematics and Statistics, Masaryk University, Kotl\'a\v{r}sk\'a 2, Brno 611 37, Czech Republic} 
\email{winther@math.muni.cz}
\begin{document}

\begin{abstract}
We study almost hypercomplex skew-Hermitian structures and almost quaternionic skew-Hermitian structures, as the geometric structures underlying $\SO^*(2n)$- and $\SO^*(2n)\Sp(1)$-structures, respectively. 
The corresponding intrinsic torsions were computed in the previous article in this series, and the algebraic types of the geometries were derived, together with the minimal adapted connections (with respect to certain normalizations conditions). 
Here we use these results to present the related first-order integrability conditions in terms of the algebraic types and other constructions. 
In particular, we use distinguished connections to provide a more geometric interpretation of the presented integrability conditions and highlight some features of certain classes. 
The second main contribution of this note is the illustration of several specific types of such geometries via a variety of examples. 
We use the bundle of Weyl structures and describe examples of $\SO^*(2n)\Sp(1)$-structures in terms of functorial constructions in the context of parabolic geometries. 
\end{abstract}

\maketitle

\tableofcontents

%%%%%%%%%%%%%%%%%%%%%%%%%%%%%%%%%%%%%%%%%%%%%%%%%
%%%%%%%%%%%%%%%%%%%%%%%%%%%%%%%%%%%%%%%%%%%%%%%%%
%%%%%%%%	INTRODUCTION     %%%%%%%%%%%%%%%%
%%%%%%%%%%%%%%%%%%%%%%%%%%%%%%%%%%%%%%%%%%%%%%%%%
%%%%%%%%%%%%%%%%%%%%%%%%%%%%%%%%%%%%%%%%%%%%%%%%%

\section*{Introduction}\label{intro}
This is the second part in a series of articles devoted to the study of geometric structures underlying manifolds admitting a reduction of the frame bundle to one of the Lie groups $\SO^*(2n)$ or $\SO^*(2n)\Sp(1)$. The group $\SO^*(2n)$ denotes the quaternionic real form of $\SO(2n, \C)$, while $\SO^*(2n)\Sp(1)=\SO^*(2n)\times_{\Z_2}\Sp(1)$. 
We call these structures \textsf{almost} \textsf{hypercomplex skew-Hermitian structures} and \textsf{almost quaternionic skew-Hermitian structures} respectively. 

In the first part \cite{CGWPartI} we describe such structures in terms of pairs $(H, \omega)$ and $(Q, \omega)$. 
Here, $H$ is an almost hypercomplex structure, $Q$ is an almost quaternionic structure and $\omega$ is a \textsf{scalar 2-form}, i.e., a non-degenerate 2-form which is $H$-Hermitian, respectively $Q$-Hermitian. 
To such pairs we may associate a symmetric 4-tensor $\Phi$ and a quaternionic skew-Hermitian form $h$. 
In \cite{CGWPartI} it is shown that both these tensors are stabilized by $\SO^*(2n)\Sp(1)$, and so each of them can serve as a defining tensor for $\SO^\ast(2n)\Sp(1)$-structures. 
This allows an alternative approach to almost quaternionic skew-Hermitian geometry, in particular, $\Phi$ is the analogue of the fundamental 4-form $\Omega$ in almost quaternionic Hermitian (qH) geometry (see for example \cite{AM}). 
Another achievement in \cite{CGWPartI} is the computation of the corresponding intrinsic torsion of $\SO^*(2n)$- or $\SO^*(2n)\Sp(1)$-structures, and the description of the corresponding (minimal) adapted connections, denoted by $\nabla^{H, \omega}$ and $\nabla^{Q, \omega}$, respectively. 
In addition, \cite{CGWPartI} provides the number of algebraic types of $\SO^*(2n)$- and $\SO^*(2n)\Sp(1)$-geometries. For $n>3$ there are seven special $\Sp(1)$-invariant pure types $\mc{X}_1, \ldots, \mc{X}_7$ of $\SO^*(2n)$-structures, and ten general types, so up to $2^{10}$ algebraic types of $\SO^*(2n)$-geometries. For $\SO^*(2n)\Sp(1)$ $(n>3)$ there are five pure types $\mc{X}_1, \ldots, \mc{X}_5$ and hence up to $2^5$ algebraic types of $\SO^*(2n)\Sp(1)$-geometries. Finally, for the low-dimensional cases $n=2, 3$, \cite{CGWPartI} shows that there exist further types of such non-integrable geometries.

In this paper we explore all these types and explain the contribution of the different intrinsic torsion components in the obstruction of the integrability of $H$, $Q$ and $\omega$. One of our main results is the description of 1st-order integrability conditions for both the $G$-structures under investigation. 
The methodology for such a procedure partially builds on the results from \cite{CGWPartI} and on branching rules. We also adopt more geometric approaches, given in terms of distinguished connections. This latter technique allows us to provide a more geometric interpretation of the presented integrability conditions and highlight some features of certain algebraic classes. Moreover, for $\SO^*(2n)\Sp(1)$-structures we establish a theory related to the covariant derivative $\nabla\Phi$, where $\nabla$ is any affine connection on $M$. Then we elaborate on this theory for an almost symplectic connection $\nabla^{\omega}$ (with respect to the related scalar 2-form $\omega$), and for the unimodular Oproiu connection $\nabla^{Q, \vol}$ (with respect to the pair $(Q, \vol=\omega^{2n})$), etc. Sections \ref{integrability} and \ref{sec2II} are devoted to the description of these methodologies and aforementioned outcome.

The other major contribution of this paper is the presentation of a variety of examples of manifolds admitting such $G$-structures, and in particular of examples realizing some of the aforementioned algebraic types.
The first class of examples is based on modifying global frames on $\R^{4n}$, or on an arbitrary almost symplectic manifold, and is presented in Section \ref{examples}.
The second class of examples is presented in Section \ref{homogeneousex}. 
Here we provide a general description of homogeneous almost hypercomplex skew-Hermitian manifolds, and almost quaternionic skew-Hermitian manifolds.
This includes a presentation of the invariant adapted connections, in terms of \textsf{Nomizu maps} (and the soldering form). We also derive how the intrinsic torsion arises from such a description. 
Then we analyze some explicit examples. In particular, we examine the reductive homogeneous space $M=K/L=\Sl(4,\R)/\Sl(2,\R)$ and show the existence of a $\Sl(4,\R)$-invariant $\SO^*(2n)$-structure of generic $\Sp(1)$-invariant type $\mc{X}_{1234567}$. 

The final class of examples is based on general functors from the categories of almost quaternionic manifolds, or quaternionic affine manifolds, and is presented in Section \ref{Weylss}.
In this direction, initially we focus on the total space of the \textsf{Weyl bundle} over the quaternionic projective space $N=\Hn {\sf P}^n = G/P$, or on the \textsf{cotangent bundle} of $N$, respectively. The related constructions are essentially motivated by the fact that for any quaternionic vector space $(V, Q)$, the contangent space $T^*V\cong V\times V^*$ admits a canonical linear quaternionic skew-Hermitian structure. In this case, the associated scalar 2-form $\omega$ is the canonical symplectic form given by the natural pairing $V\times V^*\to \R$ (see Section \ref{Weylss}). We show that there are two possible ways to generalize this result to the manifold setting, which are essentially are related to each other, since 
the Weyl bundle and cotangent bundle are diffeomorphic. When the source is a more general quaternionic affine manifold than $\Hn {\sf P}^n$, we compare the various possible combinations of the resulting almost quaternionic and almost symplectic structures. 
Then we describe conditions for their pairwise compatibility as almost quaternionic skew-Hermitian structures.

In contrast, note that there is no faithful functor from the category of almost symplectic structures to the category of almost hypercomplex/quaternionic skew-Hermitian structures. This is because hypercomplex/quaternionic morphisms are determined by finite jets and form a finite dimensional Lie group. However, symplectomorphisms are {\it not} determined by some finite jet, and their pseudo-group can be infinite dimensional.

In the final section we pose some open problems related to $\SO^*(2n)$- and $\SO^*(2n)\Sp(1)$-structures. 
Some of these emphasize further open tasks which are related to the local differential geometry of such $G$-structures (curvature invariants, twistor constructions, a metric view point of $\SO^*(2n)$-structures, etc). 
Other questions relate to a clarification of the relationship between $\SO^*(2n)\Sp(1)$-structures and quaternionic geometries in terms of parabolic geometries.
We treat some of these questions and further related tasks in the third part of this series of works, see \cite{CGWPartIII}. Finally, the current article also includes an appendix where we describe some basic topological features of manifolds admitting $\SO^*(2n)$- or $\SO^*(2n)\Sp(1)$-structures.

\medskip
\noindent {\bf Acknowledgments:}
I.C. thanks Masaryk University for hospitality. H.W. acknowledges full support from the Czech Science Foundation via the project (GA\v{C}R) No. GX19-28628X.

%%%%%%%%%%%%%%%%%%%%%%%%%%%%%%%%%%%%%%%%%%%%%%%%%%%%%
%%%%%%%%%%%%%%%%%%%%%%%%%%%%%%%%%%%%%%%%%%%%%%%%%%%%%
%%%%%%%%%%%%%%%%%%%%%%%%%%%%%%%%%%%%%%%%%%%%%%%%%%%%%
%%%%%%%%%%%%%%%%%%%%%%%%%%%%%%%%%%%%%%%%%%%%%%%%%%%%%
%%%%%%%%%%%%%%%%%%section1%%%%%%%%%%%%%%%%%%%%%%%%%%%
\section{Integrability conditions of $\SO^*(2n)$-structures and $\SO^*(2n)\Sp(1)$-structures}\label{integrability}

\subsection{Basics on almost hypercomplex/quaternionic skew-Hermitian structures}
We begin by briefly recalling the concept of almost hypercomplex skew-Hermitian structures, and almost quaternionic skew-Hermitian structures, as introduced in \cite{CGWPartI}. Let us consider a $4n$-dimensional connected smooth manifold $M$ and assume, once and for all, that $n>1$. Recall that:\\
$\bullet$ \ An \textsf{almost hypercomplex structure} $H=\{J_a : a=1, 2, 3\}$ on $M$ is a triple of almost complex structures satisfying the quaternionic relations. Then, $(M, H)$ is called an \textsf{almost hypercomplex manifold}. 
A covariant 2-tensor $F$ on $(M, H)$ is called \textsf{$H$-Hermitian} if $F(J_{a}X, J_{a}Y)=F(X, Y)$ for any $X, Y\in\Gamma(TM)$ and $a=1, 2, 3$. \\
$\bullet$ \ An \textsf{almost hypercomplex skew-Hermitian} (hs-H for short) structure on $M$ consists of a pair $(H, \omega)$, where $H$ is an almost hypercomplex structure and $\omega\in\Omega^{2}(M)$ is a non-degenerate 2-form which is $H$-Hermitian. 
Then, $(M, H, \omega)$ is called an \textsf{almost hypercomplex skew-Hermitian manifold}, and $\omega$ is called a \textsf{scalar 2-form with respect to $H$}. \\
$\bullet$ \ An \textsf{almost quaternionic structure} $Q\subset\Ed(TM)$ on $M$ is a rank $3$ sub-bundle locally spanned by an almost hypercomplex structure. This means that there exists a locally defined triple $H=\{J_{a} : a=1, 2, 3\}$ on some open neighbourhood $U\subset M$, called an \textsf{admissible basis} of $Q$, such that $Q_{x}=\langle H_{x} \rangle$, for any $x\in U$. Then, $(M, Q)$ is called an \textsf{almost quaternionic manifold}. 
A covariant 2-tensor $F$ on $(M, Q)$ is called \textsf{$Q$-Hermitian} if $F(J_aX, J_aY)=F(X, Y)$, for any $X, Y\in\Gamma(TM)$ and any admissible basis $\{J_a : a=1, 2, 3\}$ of $Q$. \\ 
$\bullet$ \ An \textsf{almost quaternionic skew-Hermitian} (qs-H for short) structure on $M$ consists of a pair $(Q , \omega)$, where $Q$ is an almost quaternionic structure and $\omega\in\Omega^{2}(M)$ is a non-degenerate 2-form which is $Q$-Hermitian. Then, $(M, Q, \omega)$ is called an \textsf{almost quaternionic skew-Hermitian manifold} and $\omega$ is called a \textsf{scalar 2-form with respect to $Q$}.

Let us now recall some of our conventions from Section 1 in \cite{CGWPartI}, in terms of the known $\E\Hh$-formalism. So, let $\E=\C^{2n}$ and $\Hh=\C^2$ be the standard representations of $\SO^*(2n)$ and $\Sp(1)$ respectively. They are both of quaternionic type. We denote by $[\E\Hh]$ the $4n$-dimensional real vector space equipped with the standard (linear) quaternionic structure $Q_0=\sp(1)\cong [S^2\Hh]$.
The module $[\E\Hh]$ is a real subspace in $\E\otimes_\C \Hh$, and elements of $[\E\Hh]$ will be often denoted by 
\[
a:= \left( 
	\begin{smallmatrix}
		a \\
		0
	\end{smallmatrix}
\right)\otimes \left( 
	\begin{smallmatrix}
		1 \\
		0
\end{smallmatrix}
\right)+\left( 
	\begin{smallmatrix}
		0 \\
		\bar{a}
	\end{smallmatrix}
\right)\otimes \left( 
	\begin{smallmatrix}
		0 \\
		1
	\end{smallmatrix}
\right)\,,\quad 
bj:= \left( 
	\begin{smallmatrix}
		0 \\
		-b
\end{smallmatrix}
\right)\otimes \left( 
	\begin{smallmatrix}
		1 \\
		0
	\end{smallmatrix}
\right)+ \left( 
	\begin{smallmatrix}
		\bar{b}\\
		0
	\end{smallmatrix}
\right)\otimes \left( 
	\begin{smallmatrix}
		0 \\
		1
	\end{smallmatrix}
\right)\,.
\]

Almost hs-H structures are in bijection with reductions $\mc{P}$ of the frame bundle $\mc{F}(M)$ of $M$ to $\SO^*(2n)$. Such reductions consist of skew-Hermitian bases of $T_{x}M$ with respect to $(H_{x}, \omega_x)$, inducing a linear hypercomplex isomorphism $u : T_{x}M\to [\E\Hh]$.
We say that a basis $e_1,\dots, e_{2n},f_1,\dots, f_{2n}$ is \textsf{adapted} to the linear hypercomplex structure $H_x=\{J_1, J_2, J_3\}$ on $T_{x}M$ if 
\[
J_1(e_{c})=e_{c+n}\,,\quad
J_2(e_{c})=f_{c}\,,\quad 
J_3(e_{c})=f_{c+n}\,,
\] 
for all $1\leq c\leq n$.
We should mention that such a basis differs from an \textsf{adapted basis (or frame) of $H$} in terms of \cite[p.~209]{AM}, but they are related by a permutation. 
Recall also that by a \textsf{skew-Hermitian basis} with respect to $(H_x, \omega_x)$, we mean 
a symplectic basis $(e_1,\dots, e_{2n},f_1,\dots, f_{2n})$ 
of the scalar 2-form $\omega_x$, i.e., 
\[
	\omega(e_r,e_s)=0\,,\quad \omega(f_r,f_s)=0\,,\quad \omega(e_r,f_r)=1\,,\quad\omega(e_r,f_s)=0\,, (r\neq s)
\]
for $1\leq r\leq 2n$ and $1\leq s\leq 2n$, which is also adapted to $H$ in the above sense, see also \cite[\basSKEW]{CGWPartI}. 
If $(u_1,\dots,u_{2n},v_1,\dots,v_{2n})^t$ are coordinates in the skew-Hermitian basis, then the isomorphism $u$ is given by
\[
	a=(u_1+iu_{n+1},\dots,u_n+iu_{2n})^t\,,\quad bj=(v_1+iv_{n+1},\dots,v_n+iv_{2n})^t.
\]
In this way, the almost hypercomplex structure $H=\{J_1, J_2, J_3\}$ corresponds to a unique admissible basis $H_0$ of $Q_0$ on $[\E\Hh]$. Moreover, the almost symplectic form $\omega$ corresponds to a unique scalar 2-form $\omega_0$ on $[\E\Hh]$. We refer to them by the terms \textsf{standard admissible basis} of $Q_0$ on $[\E\Hh]$ and \textsf{standard scalar 2-form} on $[\E\Hh]$, respectively.

Similarly, almost qs-H structures are in bijection with reductions $\mc{Q}$ of the frame bundle of $M$ to $\SO^*(2n)\Sp(1)$. Such reductions consist of all skew-Hermitian bases of $T_{x}M$ with respect to $(H_x, \omega_x)$, where $H_x$ is some admissible basis $H_{x}$ of $Q_x$. Similarly to the hypercomplex case, such bases induce a linear quaternionic isomorphisms $u : T_{x}M\to [\E\Hh]$. 

Let $\{J_{a} : a=1, 2, 3\}$ be a local admissible basis of $Q$ and let $(Q, \omega)$ be an almost qs-H structure on $M$. Then, we may attach three pseudo-Riemannian metric tensors of signature $(2n, 2n)$, given by $g_{J_a}(X, Y):=\omega(X, J_{a}Y)$. 
In terms of a skew-Hermitian frame $(e_1,\dots,e_{2n},f_1,\dots,f_{2n})$ of $TM$ and its dual coframe $(e_1^*,\dots,e_{2n}^*,f_1^*,\dots,f_{2n}^*)$, we have
\begin{align*}
g_{J_1}&=\sum_{a=1}^n (e_a^*\odot f_{n+a}^*-e_{n+a}^*\odot f_{a}^*)\,,\\
g_{J_2}&=\sum_{a=1}^n (-e_a^*\otimes e_{a}^*-f_{a}^*\otimes f_{a}^*+e_{n+a}^*\otimes e_{n+a}^*+f_{n+a}^*\otimes f_{n+a}^*)\,,\\
g_{J_3}&=\sum_{a=1}^n (-e_a^*\odot e_{a+n}^*-f_{a+n}^*\odot f_{a}^*)\,.
\end{align*}
The tensors $g_{J_{a}}$ $(a=1, 2, 3)$ are globally defined only when $H$ is a global admissible basis, i.e., when $(H, \omega)$ is an almost hs-H structure on $M$. However, the \textsf{quaternionic skew-Hermitian form $h$} defined by
\[
h(X,Y)Z:=\omega(X,Y)Z+\sum_{a} g_{J_a}(X, Y)J_aZ\,,\quad X, Y, Z\in\Gamma(TM)\,,
\]
is independent of the local admissible basis $H$ of $Q$, and moreover has stabilizer the Lie group $\SO^*(2n)\Sp(1)$. Thus, it can be viewed as 
a $(1, 3)$-tensor globally defined on $(M, Q, \omega)$. Similarly, 
the symmetric 4-tensor 
\[
\Phi:=\sum_{a}g_{J_{a}}\odot g_{J_{a}}=\mathsf{Sym}\big(\omega(\cdot\,, \Im(h)\cdot)\big)
\] 
is independent of the local admissible basis $H$ and has stabilizer $\SO^*(2n)\Sp(1)$, see \cite{CGWPartI} for more details. Thus, it is a global tensor on $(M, Q, \omega)$, i.e., $\Phi\in\Gamma(S^4T^*M)$. As a consequence, these tensors provide an alternative approach to almost qs-H structures. In terms of the skew-Hermitian (co)-frame described above, we have
{\footnotesize
\begin{eqnarray*}
\Phi&=&(\sum_{a=1}^n (e_a^*\odot f_{n+a}^*-e_{n+a}^*\odot f_{a}^*)\odot \sum_{a=1}^n (e_a^*\odot f_{n+a}^*-e_{n+a}^*\odot f_{a}^*))\\
&&+(\sum_{a=1}^n (-e_a^*\otimes e_{a}^*-f_{a}^*\otimes f_{a}^*+e_{n+a}^*\otimes e_{n+a}^*+f_{n+a}^*\otimes f_{n+a}^*))\\
&&\odot (\sum_{a=1}^n (-e_a^*\otimes e_{a}^*-f_{a}^*\otimes f_{a}^*+e_{n+a}^*\otimes e_{n+a}^*+f_{n+a}^*\otimes f_{n+a}^*))\\
&&+(\sum_{a=1}^n (-e_a^*\odot e_{a+n}^*-f_{a+n}^*\odot f_{a}^*))\odot (\sum_{a=1}^n (-e_a^*\odot e_{a+n}^*-f_{a+n}^*\odot f_{a}^*))\,.
\end{eqnarray*}}

Let us finally recall that on an almost hypercomplex/quaternionic skew-Hermitian manifold we have introduced (minimal) adapted connections $\nabla^{H, \omega}$ and $\nabla^{Q, \omega}$, respectively. In Section \ref{sec2II} we will recall further related details and provide a more geometric viewpoint of these adapted connections.

\subsection{Applications of branching rules}
In this section we use the results presented in the first article \cite{CGWPartI} on intrinsic torsion and minimal connections of $\SO^*(2n)\Sp(1)$- and $\SO^*(2n)$-structures, to derive 1st-order integrability conditions for such structures. In particular, by \cite{CGWPartI} we shall use \genCa, \algebtypes, \genCar, \zeroprog, and \symtorscomp. Some of our arguments are also based on ``\textsf{branching rules}'' and in particular on the following general remark from the theory of $G$-structures.
\begin{rem}\label{genHenrik}\textnormal{
Let $K\subset G$ be a reductive Lie subgroup of a reductive linear Lie group $G$. Then, a $K$-structure on a manifold $M$ is also an example of a $G$-structure on $M$. 
Assume that we have equivalence classes of minimal $K$-connections and minimal $G$-connections. 
Let $\nabla^K$ and $\nabla^G$ be minimal connections for the $K$-structure and $G$-structure, respectively. Then we have
\begin{equation}
	\nabla^K = \nabla^G + A
\end{equation}
for some (1, 2)-tensor field $A\in \Gamma(T^\ast M \otimes \fr{g})$, where $\fr{g}$ is the Lie algebra of $G$. Therefore, the restriction to $K$ of the intrinsic torsion module of $G$ will appear as a submodule of the intrinsic torsion module of $K$. 
Consequently, one can obtain geometric interpretations of certain submodules of the $K$-intrinsic torsion by branching the intrinsic torsion modules of $G$ to $K$.}
\end{rem}
Next we shall apply this method to $\SO^*(2n)\Sp(1)$-structures, and also to $\SO^*(2n)$-structures. 
In the latter case much of our focus is devoted to the types $\mc{X}_1, \ldots, \mc{X}_7$, characterized in terms of $\Sp(1)$-invariant conditions (see \cite[\genCar]{CGWPartI}).

\subsection{1st-order integrability conditions for $\SO^{*}(2n)\Sp(1)$-structures}\label{mainthems}
Initially, it is convenient to derive 1st-order integrability conditions about $\SO^{*}(2n)\Sp(1)$-structures.
\subsubsection{Quaternionic torsion}
Let us consider the Lie group $G:=\Gl(n, \Hn)\Sp(1)$. Then, $K:=\SO^\ast(2n)\Sp(1)\subset G$ is a (closed) Lie subgroup of $G$. The intrinsic torsion module of $G$ was computed in \cite{Salamon86} (see also \cite{AM}). They found a unique normalization condition (due to multiplicity one in the decomposition of quaternionic torsion tensors), given by
\begin{equation*}
	\mc{D}\big(\gl(n, \Hn)\oplus\fr{sp}(1)\big)= R(3\theta + \pi_1 + 2\pi_{2n})\,.
\end{equation*}
Here $R(3\theta + \pi_1 + 2\pi_{2n})$ denotes the $\fr{gl}(n, \Hn)\sp(1)$-module with highest weight $3\theta + \pi_1 + 2\pi_{2n}$, where $\theta$ is the fundamental weight of $\sp(1)$, and $\pi_i$ is the $i$-th fundamental weight of $\fr{sl}(n, \Hn)$. The identity matrix acts as scalar multiplication by $-1$, which fixes the action of the center.
Hence, any almost quaternionic connection has a $G$-equivariant decomposition of its torsion tensor with one component of this isotype. This does not require that the connection is minimal, and the component may vanish.
Therefore, this applies in particular to connections which are minimal with respect to $K$. 
Thus we may branch to $K$, a procedure which gives rise to the following
\begin{prop}
Viewed as a $\SO^\ast(2n)\Sp(1)$-module, the restriction $\mc{D}\big(\gl(n, \Hn)\oplus\fr{sp}(1)\big)|_{\SO^\ast(2n)\Sp(1)}$ satisfies the following	\begin{equation}
		\mc{D}\big(\gl(n, \Hn)\oplus\fr{sp}(1)\big)|_{\SO^\ast(2n)\Sp(1)} \cong
		[\K S^3\Hh]^*\oplus [\Lambda^3\E S^3\Hh]^*=\mc{X}_{12}=\mc{X}_1\oplus\mc{X}_2\,.
	\end{equation}
\end{prop}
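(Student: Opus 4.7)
The plan is to carry out a branching-rule computation guided by Remark \ref{genHenrik}. Since $\mc{D}\big(\gl(n,\Hn)\oplus\sp(1)\big)$ is the irreducible $\Gl(n,\Hn)\Sp(1)$-module of highest weight $3\theta+\pi_1+2\pi_{2n}$, and since the factor $3\theta$ contributes $S^3\Hh$ on the $\Sp(1)$-side in the $\E\Hh$-formalism, I would first rewrite
\[
\mc{D}\big(\gl(n,\Hn)\oplus\sp(1)\big)\cong [W\otimes S^3\Hh]^*,
\]
where $W$ is the irreducible $\Sl(n,\Hn)$-module with highest weight $\pi_1+2\pi_{2n}$.

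Next, since the $\Sp(1)$-factor is common to both $G=\Gl(n,\Hn)\Sp(1)$ and $K=\SO^\ast(2n)\Sp(1)$, restriction from $G$ to $K$ only affects the $W$-factor. I would therefore reduce the problem to the branching $W|_{\SO^\ast(2n)}$. After complexification this becomes the standard branching of an irreducible $\sl(2n,\C)$-module to $\so(2n,\C)$, where the new ingredient is the invariant symmetric bilinear form on $\E$ available for $\so(2n,\C)$ but not for $\sl(2n,\C)$. Using this form to produce explicit projections in tensor representations and comparing with the standard branching tables, I would verify that $W|_{\SO^\ast(2n)}$ splits into exactly two irreducible $\SO^\ast(2n)$-summands and identify them with the distinguished modules $\K$ and $\Lambda^3\E$ introduced in Part I (\EHbases). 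Tensoring back with $S^3\Hh$ and dualising then yields the claimed isomorphism
\[
\mc{D}\big(\gl(n,\Hn)\oplus\sp(1)\big)\big|_{\SO^\ast(2n)\Sp(1)}\cong [\K S^3\Hh]^*\oplus [\Lambda^3\E S^3\Hh]^*=\mc{X}_1\oplus\mc{X}_2.
\]

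Remark \ref{genHenrik} serves as a sanity check: it forces the restriction to lie inside $\mc{H}(\so^\ast(2n)\oplus\sp(1))=\mc{X}_1\oplus\cdots\oplus\mc{X}_5$, which drastically restricts the admissible isotypic summands a priori. A more economical route would be a pure dimension count: compute $\dim R(3\theta+\pi_1+2\pi_{2n})$ via the Weyl dimension formula, compare with $\dim[\K S^3\Hh]^*+\dim[\Lambda^3\E S^3\Hh]^*$, and combine equality with Remark \ref{genHenrik} plus the non-vanishing of the projections onto the $\K$- and $\Lambda^3\E$-isotypic components to conclude.

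The main obstacle is the precise identification of the two irreducible summands of $W|_{\SO^\ast(2n)}$ with the specific modules $\K$ and $\Lambda^3\E$, rather than with a priori other irreducibles of the same dimension; this likely requires a careful weight-level argument or an explicit projection argument exploiting the $\SO^\ast(2n)\Sp(1)$-invariant tensors $h$ and $\Phi$ highlighted in the introduction.
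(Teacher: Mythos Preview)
The paper does not actually write out a proof of this proposition; it simply states the result as the outcome of ``branching to $K$'' the known irreducible $\Gl(n,\Hn)\Sp(1)$-module $R(3\theta+\pi_1+2\pi_{2n})$ from \cite{AM}. Your proposal spells out precisely this computation---separating off the common $\Sp(1)$-factor and reducing to a $\sl(2n,\C)\to\so(2n,\C)$ branching on the $\E$-side, with Remark~\ref{genHenrik} constraining the answer to lie in $\mc{X}_1\oplus\cdots\oplus\mc{X}_5$---and is exactly what the paper has in mind; your alternative dimension-count route is also valid and arguably the cleanest way to finish.
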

Therefore, the components of the intrinsic torsion of an almost qs-H structure taking values in these submodules only depends on, and coincides with, the intrinsic torsion of the underlying almost quaternionic structure. This yields the following.
\begin{theorem}\label{quaternionicthm}
	Let $(Q,\omega)$ be an almost qs-H structure with canonical connection $\nabla^{Q,\omega}$. Then $Q$ is quaternionic, if and only if $(Q, \omega)$ is of type $\mc{X}_{345}$, that is the torsion components $[\K S^3\Hh]^*$ and $[\Lambda^3\E S^3\Hh]^*$ of $\nabla^{Q,\omega}$ vanish.
\end{theorem}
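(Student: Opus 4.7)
The plan is to deduce the theorem directly from the branching statement of the preceding proposition together with the general principle encapsulated in Remark \ref{genHenrik}, without performing any further intrinsic torsion computation.

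First I would recall that the canonical adapted connection $\nabla^{Q,\omega}$ is in particular a $\SO^\ast(2n)\Sp(1)$-connection, and since $\SO^\ast(2n)\Sp(1)\subset\Gl(n,\Hn)\Sp(1)$, it is therefore also an almost quaternionic connection on $(M,Q)$. Any almost quaternionic connection has a torsion tensor with a well-defined $\Gl(n,\Hn)\Sp(1)$-equivariant component in the quaternionic intrinsic torsion module $\mc{D}(\gl(n,\Hn)\oplus\fr{sp}(1))$ — this is where the unique normalization condition from \cite{AM} comes in, and the key point from Remark \ref{genHenrik} is that this holds regardless of whether the connection is itself minimal for the larger group $G$. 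The projection of the torsion of $\nabla^{Q,\omega}$ onto this submodule therefore \emph{is} the intrinsic torsion of the underlying almost quaternionic structure $Q$.

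Next I would apply the branching identification $\mc{D}(\gl(n,\Hn)\oplus\fr{sp}(1))|_{\SO^\ast(2n)\Sp(1)}\cong[\K S^3\Hh]^*\oplus[\Lambda^3\E S^3\Hh]^*=\mc{X}_1\oplus\mc{X}_2$ established in the proposition. Because these two $\SO^\ast(2n)\Sp(1)$-isotypes occur with multiplicity one in the total intrinsic torsion module of $(Q,\omega)$ (as recorded in \cite[\genCar]{CGWPartI}), the inclusion of the quaternionic torsion submodule into the qs-H intrinsic torsion module is forced to identify it with the $\mc{X}_1\oplus\mc{X}_2$ summand. Consequently the quaternionic intrinsic torsion of $Q$ vanishes if and only if the $\mc{X}_1$ and $\mc{X}_2$ components of the torsion of $\nabla^{Q,\omega}$ vanish, i.e.\ $(Q,\omega)$ is of type $\mc{X}_{345}$. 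Since $Q$ is a quaternionic structure precisely when its intrinsic torsion is zero, the theorem follows.

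The only mildly subtle step is the middle one: one must be sure that the abstract isomorphism produced by branching is realised by the natural projection, rather than by some identification that could depend on the choice of adapted connection within the normalization class. This is the point where one invokes the multiplicity-one feature together with Schur's lemma applied to the $\SO^\ast(2n)\Sp(1)$-equivariant inclusion of modules — a routine but essential sanity check. Everything else is formal bookkeeping already made available by \cite[\algebtypes]{CGWPartI} and \symtorscomp.
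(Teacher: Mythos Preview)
Your proposal is correct and follows essentially the same approach as the paper: the theorem is stated immediately after the branching proposition and the surrounding discussion (invoking Remark \ref{genHenrik} and the multiplicity-one property of $\mc{D}(\gl(n,\Hn)\oplus\fr{sp}(1))$), with the paper simply writing ``This yields the following'' as its proof. One small bookkeeping point: the multiplicity-one statement for the $\SO^\ast(2n)\Sp(1)$-intrinsic torsion is recorded in \cite[\genCa]{CGWPartI} (or \cite[\algebtypes]{CGWPartI}) rather than in \cite[\genCar]{CGWPartI}, which concerns the $\SO^\ast(2n)$ case.
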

\subsubsection{Symplectic torsion}
Assume that $G:=\Sp(4n,\R)$. It is well known that (see for example \cite{APick, Cap})
\begin{equation}
	\mc{D}(\fr{sp}(4n,\R)) \cong \Lambda^3 (\R^{4n})^\ast\cong \Lambda^3_{0} (\R^{4n})^\ast\oplus (\R^{4n})^\ast\,,
\end{equation}
and in particular the intrinsic torsion can be identified with the 3-form $\dd\omega$.
Similarly to the previous case, we may branch this module with respect to $\so^\ast(2n)\oplus\fr{sp}(1)$. 
\begin{prop}\label{spnormal}
Viewed as a $\SO^\ast(2n)\Sp(1)$-module,	the restriction $\mc{D}(\sp(4n,\R))|_{\SO^\ast(2n)\Sp(1)} $ satisfies the following 
\begin{equation}\label{spnormal2}
	\mc{D}(\sp(4n,\R))|_{\SO^\ast(2n)\Sp(1)} \cong
	[\Lambda^3\E S^3\Hh]^*\oplus [\K\Hh]^* \oplus [\E\Hh]^*=\mc{X}_{234}\,.
\end{equation}
\end{prop}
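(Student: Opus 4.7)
The plan is to branch the $\Sp(4n,\R)$-module $\mc{D}(\sp(4n,\R))\cong\Lambda^3(\R^{4n})^*$ to $\SO^*(2n)\Sp(1)$ directly in the $\E\Hh$-formalism. Complexifying and invoking the $\SO^*(2n)$-invariant complex symmetric form on $\E$ together with the $\Sp(1)$-invariant skew form on $\Hh$, we have $\E^*\cong\E$ and $\Hh^*\cong\Hh$, so it is enough to decompose $\Lambda^3(\E\otimes\Hh)$ as a complex $\SO^*(2n)\Sp(1)$-module and then take the real forms denoted by the square brackets.

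First, I would apply the Cauchy (plethystic) identity
\begin{equation*}
\Lambda^3(\E\otimes\Hh)\cong\bigoplus_{\lambda\vdash 3}\Ss^\lambda\E\otimes\Ss^{\lambda'}\Hh\,,
\end{equation*}
which for the partitions $\lambda\in\{(3),(2,1),(1,1,1)\}$ produces the three summands $S^3\E\otimes\Lambda^3\Hh$, $\Ss^{(2,1)}\E\otimes\Ss^{(2,1)}\Hh$, and $\Lambda^3\E\otimes S^3\Hh$. Since $\dim_\C\Hh=2$ the first summand vanishes, and in the middle one $\Ss^{(2,1)}\Hh\cong\Hh\otimes\Lambda^2\Hh\cong\Hh$ because $\Lambda^2\Hh$ is the trivial $\Sp(1)$-module.

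Next I would refine the $\E$-factors using the invariant symmetric form on $\E$. The module $\Lambda^3\E$ is already $\SO^*(2n)$-irreducible, since any contraction of an alternating 3-tensor with a symmetric bilinear form vanishes. The hook module $\Ss^{(2,1)}\E$, on the other hand, carries a non-zero $\SO^*(2n)$-equivariant contraction map to $\E$ whose kernel is the irreducible traceless piece $\K:=\Ss^{(2,1)}_0\E$; thus $\Ss^{(2,1)}\E\cong\K\oplus\E$. Assembling the pieces gives
\begin{equation*}
\Lambda^3(\E\otimes\Hh)\cong\Lambda^3\E\otimes S^3\Hh\ \oplus\ \K\otimes\Hh\ \oplus\ \E\otimes\Hh\,,
\end{equation*}
and passing to real forms and duals reproduces $[\Lambda^3\E S^3\Hh]^*\oplus[\K\Hh]^*\oplus[\E\Hh]^*=\mc{X}_{234}$.

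The main obstacle I expect is bookkeeping: one must verify that the traceless hook summand extracted here coincides with the abstract $\SO^*(2n)$-module $\K$ fixed in \EHbases of \cite{CGWPartI}, and that the two resulting $\Sp(1)$-isotypes $S^3\Hh$ and $\Hh$ line up with the pure types $\mc X_2, \mc X_3, \mc X_4$ as labelled there. Once the normalisations of the various invariant pairings (scalar $2$-form, symmetric form on $\E$, skew form on $\Hh$) are fixed consistently with those of the first paper, this identification is routine, and the decomposition \eqref{spnormal2} follows.
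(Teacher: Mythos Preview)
Your proposal is correct and carries out in detail precisely the branching that the paper invokes without proof: the paper simply states that one branches $\mc{D}(\sp(4n,\R))\cong\Lambda^3(\R^{4n})^*$ with respect to $\so^*(2n)\oplus\sp(1)$ and records the answer, whereas you supply the actual computation via the Cauchy decomposition of $\Lambda^3(\E\otimes\Hh)$ and the $\SO^*(2n)$-splitting $\Ss^{(2,1)}\E\cong\K\oplus\E$. Your identification of $\K$ as the traceless hook piece is consistent with the paper's conventions (compare the decomposition $[\E\Hh]^*\otimes[\Lambda^2\E S^2\Hh]^*$ in Proposition~\ref{irrWi}, which forces $\E\otimes\Lambda^2\E\cong\K\oplus\E\oplus\Lambda^3\E$), so the bookkeeping concern you raise is indeed routine.
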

Thus, a combination of Proposition \ref{spnormal} with \cite[\symtorscomp]{CGWPartI} yields the following theorem.
\begin{theorem}\label{symplecticthem}
	Let $(Q,\omega)$ be an almost qs-H structure with canonical connection $\nabla^{Q,\omega}$. Then $\omega$ is symplectic, if and only if $(Q, \omega)$ is of type $\mc{X}_{15}$, that is the torsion components $[\Lambda^3\E S^3\Hh]^*$, $[\K\Hh]^*$ and $[\E\Hh]^*$ of $\nabla^{Q,\omega}$ vanish.
\end{theorem}
\begin{rem}\label{confchang}
	\textnormal{Similarly to the almost-symplectic case, for $\SO^*(2n)\Sp(1)$-structures the vectorial torsion component with values in $\mc{X}_{4}=[\E\Hh]^*$ can be modified by a conformal change 
\[
	\omega \mapsto f\cdot \omega\,,
\]
	where $f\in C^\infty (M)$ is some non-vanishing function. Then, by \cite[\starconnectionsp]{CGWPartI} we deduce that the canonical connection $\nabla^{Q,\omega}$ is modified by 
\[
	 -\dd f\otimes \id-\frac{4n}{n+1} \pi_S(\omega\otimes \dd f^T)+\frac{n}{n+1}\sum_{a=1}^3 \dd f \circ J_a\otimes J_a\,,
\]
	where $\dd f^{T}$ denotes the symplectic transpose of $\dd f$ and $\pi_{S}$ is the projection defined by 
\[
\pi_S(\omega\otimes Z)(X,Y):=Sym\Big(\pi_{1, 1}(\omega(X,.) \otimes Z)\Big)Y\,,
\]
with 
\[
\pi_{1, 1} : \gl([\E\Hh])\to \gl(n,\Hn)\,,\quad \pi_{1, 1}(\omega(X,.) \otimes Z)Y=\frac{1}{4}\Big(\omega(X,Y) Z-\sum_a g_{J_a}(X,Y) J_aZ\Big)\,,
\]
see \cite[Section 3.2]{CGWPartI}. It turns out that this change does not affect the compatibility with $Q$. In this way we obtain a new $\SO^*(2n)\Sp(1)$-structure $(Q, f\cdot\omega)$. When the initial $\SO^*(2n)\Sp(1)$-structure $(Q, \omega)$ is of type $\mc{X}_{145}$, then (locally) there is a smooth function $f$ on $M$ such that $f\cdot\omega$ is symplectic. This is equivalent to saying that $(Q, f\cdot\omega)$ is of type $\mc{X}_{15}$. We deduce that the type $\mc{X}_{145}$ of $\SO^*(2n)\Sp(1)$-structures characterizes a certain subclass of the so-called \textsf{parabolic (locally) conformally symplectic structures}, examined by \v{C}ap and Sala\v{c} in \cite{Cap}.}
\end{rem}
\subsubsection{Intersection torsion and compatibility torsion}
Notice that the submodule $\mc{X}_2=[\Lambda^3\E S^3\Hh]^*$ appears in both the quaternionic and the symplectic torsion. 
Since $\mc{X}_2$ has multiplicity one in $\mc{D}(\gl(n,\Hn)\oplus\fr{sp}(1))$, $\mc{D}(\fr{sp}(4n,\R))$ and $\Lambda^2[\E\Hh]^*\otimes [\E\Hh]$, this submodule must coincide. This represents a smooth compatibility condition between those almost symplectic structures and almost quaternionic structures which are algebraically compatible. Thus we may call the submodule $\mc{X}_2$ the module corresponding to \textsf{intersection torsion}. 
On the other side, and since we have exhausted the quaternionic and symplectic torsion, we now consider the complement of their union in the intrinsic torsion. This submodule will be called the module corresponding to \textsf{compatibility torsion}, and it consists of the simple submodule $\mc{X}_5=[S^3_0\E \Hh]^*$. 
By \cite[\genCa]{CGWPartI}, this submodule is isotypically unique in the space of torsion tensors, hence it is independent of our normalization condition. From this and the previous two theorems, we obtain
\begin{theorem}
	Let $(Q,\omega)$ be an almost qs-H structure with canonical connection $\nabla^{Q,\omega}$. Then $Q$ is quaternionic and $\omega$ symplectic, if and only if $(Q, \omega)$ is of type $\mc{X}_5$, that is the torsion of $\nabla^{Q,\omega}$ is contained in the submodule $[S^3_0\E \Hh]^*$.
\end{theorem}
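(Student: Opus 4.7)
The plan is to deduce this theorem directly by intersecting the two preceding theorems (Theorem on quaternionic torsion and Theorem on symplectic torsion), using the fact that the submodule $\mc{X}_5=[S^3_0\E\Hh]^*$ of the intrinsic torsion is precisely the complement of the union of the quaternionic and symplectic torsion modules. Explicitly, the pure $\Sp(1)$-invariant types decompose as $\mc{X}_{12345}$, so that one merely needs to compute $\mc{X}_{345}\cap\mc{X}_{15}=\mc{X}_5$.

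First I would establish the ``only if'' direction. Assume that $Q$ is quaternionic and $\omega$ is symplectic. By Theorem \ref{quaternionicthm}, the torsion of $\nabla^{Q,\omega}$ has vanishing components in $[\K S^3\Hh]^*\oplus[\Lambda^3\E S^3\Hh]^*=\mc{X}_{12}$, hence lies in $\mc{X}_{345}$. By Theorem \ref{symplecticthem}, the torsion of $\nabla^{Q,\omega}$ has vanishing components in $[\Lambda^3\E S^3\Hh]^*\oplus [\K\Hh]^*\oplus [\E\Hh]^*=\mc{X}_{234}$, hence lies in $\mc{X}_{15}$. Taking the intersection in the direct-sum decomposition of the intrinsic torsion gives exactly $\mc{X}_5=[S^3_0\E\Hh]^*$.

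For the converse, I would assume $(Q,\omega)$ is of type $\mc{X}_5$. Then in particular the components of the intrinsic torsion in $\mc{X}_{12}$ vanish, so Theorem \ref{quaternionicthm} applies and yields that $Q$ is quaternionic; likewise the components in $\mc{X}_{234}$ vanish, so Theorem \ref{symplecticthem} applies and gives that $\omega$ is symplectic.

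The only subtle point is to verify that there is no ambiguity coming from the choice of normalization: one must check that the submodule $\mc{X}_5$ is well-defined independently of the particular minimal connection. This is exactly guaranteed by \cite[\genCa]{CGWPartI}, which asserts that $[S^3_0\E\Hh]^*$ is isotypically unique in the whole space of torsion tensors $\Lambda^2[\E\Hh]^*\otimes[\E\Hh]$. Consequently the vanishing (or non-vanishing) of the $\mc{X}_5$-component is an intrinsic property of the $\SO^\ast(2n)\Sp(1)$-structure $(Q,\omega)$, not an artifact of the normalization chosen in \cite{CGWPartI}. The mild obstacle, if any, is keeping the branching conventions straight so that the modules $\mc{X}_i$ appearing in the quaternionic restriction $\mc{D}(\gl(n,\Hn)\oplus\sp(1))|_{\SO^\ast(2n)\Sp(1)}$ and in the symplectic restriction $\mc{D}(\sp(4n,\R))|_{\SO^\ast(2n)\Sp(1)}$ are genuinely the same summands inside the ambient $\SO^\ast(2n)\Sp(1)$-module; this however follows from the multiplicity-one property that was already invoked to identify the intersection torsion $\mc{X}_2$ in the discussion preceding the statement.
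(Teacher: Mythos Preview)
Your proposal is correct and matches the paper's approach exactly: the paper likewise derives the theorem as an immediate consequence of the two preceding theorems (Theorems \ref{quaternionicthm} and \ref{symplecticthem}) by intersecting $\mc{X}_{345}$ with $\mc{X}_{15}$, after noting via \cite[\genCa]{CGWPartI} that $\mc{X}_5=[S^3_0\E\Hh]^*$ is isotypically unique in $\Lambda^2[\E\Hh]^*\otimes[\E\Hh]$ and hence normalization-independent. Your write-up simply spells out the two directions and the multiplicity-one check more explicitly than the paper does.
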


%%%%%%%%%%%%%%%%%%%%%%%%%%%%%%%%%%%%%%%%%%%%%%%%%%%%%%%%%%%
%%%%%%%%%%%%%%%%%%%%%%%%%%%%%%%%%%%%%%%%%%%%%%%%%%%%%%%%%%%%%
%%%%%%%%%%%%%%%%%%%%%%%%%%%%%%%%%%%%%%%%%%%%%%%%%%%%%%%%%%%%%%
%%%%%%%%%%%%%%%%%%%%%%section%%%%%%%%%%%%%%%%%%%%%%%%%%%%%%%%

\subsection{1st-order integrability conditions for $\SO^*(2n)$-structures} \label{secintegrable}
Next we will determine 1st-order integrability conditions for $\SO^*(2n)$-structures on some fixed $4n$-dimensional smooth connected manifold $M$.
To do so we benefit from \cite[\genCar]{CGWPartI} and the interpretation of the intrinsic torsion module $\mc{H}(\fr{so}^*(2n))$ as a $\Sp(1)$-module. 

\subsubsection{Hypercomplex torsion}
In terms of Remark \ref{genHenrik} let us now set 
$G:=\SO^*(2n)\Sp(1)$ and $K:=\SO^\ast(2n)\subset G$. 
Then, by \cite[\zeroprog]{CGWPartI} we may pose the following decomposition of hypercomplex torsion
\[
\imm(\pi_H) \cong \mc{X}_1\oplus \mc{X}_2\oplus\mc{X}_6 \cong 2\Lambda^3 \E^*\oplus 2\K^*\oplus2\E^*\,.
\]
Here, the first splitting should be read in terms of $G$-modules, while the second one in terms of $K$-modules.

Therefore, the components of the intrinsic torsion as an almost hs-H structure taking values in these $G$-modules only depends on, and coincides with, the intrinsic torsion of the underlying almost hypercomplex structure. 
This yields the following:
\begin{theorem}\label{hypercomplexthm}
	Let $(H,\omega)$ be an almost hs-H structure with canonical connection $\nabla^{H,\omega}$. Then $H$ is hypercomplex, if and only if $(H, \omega)$ is of type $\mc{X}_{3457}$, that is the torsion components $\mc{X}_1=[\K S^3\Hh]^*$, $\mc{X}_2=[\Lambda^3\E S^3\Hh]^*$ and $\mc{X}_6=[\E S^3\Hh]^*$ of $\nabla^{H,\omega}$ vanish.
\end{theorem}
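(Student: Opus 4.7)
The plan is to combine the general branching principle of Remark \ref{genHenrik} with the explicit decomposition of the intrinsic torsion established in \cite[\zeroprog]{CGWPartI}. The starting point is the classical fact that an almost hypercomplex structure $H=\{J_1,J_2,J_3\}$ is integrable (hypercomplex) if and only if the three Nijenhuis tensors $N_{J_a}$ vanish, equivalently if and only if the intrinsic torsion of the underlying $\Gl(n,\Hn)$-structure vanishes; this is the Newlander–Nirenberg theorem applied to each $J_a$, together with the observation that compatibility of the $J_a$ is automatic from $H=\{J_1,J_2,J_3\}$ forming a quaternionic triple.

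First, I would apply Remark \ref{genHenrik} to the Lie subgroup inclusion $K=\SO^*(2n)\subset\Gl(n,\Hn)=G$, which at the level of geometric structures corresponds to forgetting the scalar $2$-form $\omega$ and retaining only $H$. Since $\nabla^{H,\omega}$ is in particular an almost hypercomplex connection, its torsion contains a $\SO^*(2n)$-submodule that is canonically identified with the $\Gl(n,\Hn)$-intrinsic torsion. Denote by $\pi_H$ the projection of the $\SO^*(2n)$-intrinsic torsion onto this submodule. Then $\pi_H(\Tor(\nabla^{H,\omega}))=0$ is tautologically equivalent to the vanishing of the $\Gl(n,\Hn)$-intrinsic torsion, hence to $H$ being hypercomplex.

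Second, I would identify $\imm(\pi_H)$ explicitly. The decomposition stated before the theorem (which in turn invokes \cite[\zeroprog]{CGWPartI}) gives
\[
\imm(\pi_H)\cong \mc{X}_1\oplus\mc{X}_2\oplus\mc{X}_6 \cong 2\Lambda^3\E^*\oplus 2\K^*\oplus 2\E^*,
\]
where the first decomposition is as a $\SO^*(2n)\Sp(1)$-module (i.e.\ grouping pure types by $\Sp(1)$-invariance) and the second as a $\SO^*(2n)$-module. Crucially, each of the isotypes $\mc{X}_1,\mc{X}_2,\mc{X}_6$ occurs with multiplicity one in the ambient torsion space by \cite[\genCa]{CGWPartI}, so the position of $\imm(\pi_H)$ inside the intrinsic torsion module is uniquely determined and independent of any chosen complement or normalization. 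Chaining the equivalences yields
\[
H\text{ is hypercomplex}\iff \pi_H(\Tor(\nabla^{H,\omega}))=0\iff \mc{X}_1,\mc{X}_2,\mc{X}_6 \text{ components vanish}\iff (H,\omega) \text{ is of type } \mc{X}_{3457}.
\]

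The main point requiring care is the rigorous identification of $\imm(\pi_H)$ with the specific submodules $\mc{X}_1\oplus\mc{X}_2\oplus\mc{X}_6$ rather than any abstractly isomorphic copy inside $\mc{H}(\so^*(2n))$. This is exactly what is settled by \cite[\zeroprog]{CGWPartI}, together with the isotypic uniqueness from \cite[\genCa]{CGWPartI}; once these two inputs are granted, the branching argument supplies the theorem immediately, with no further computation needed.
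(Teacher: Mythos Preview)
Your proposal is correct and follows essentially the same approach as the paper: invoke the branching principle of Remark \ref{genHenrik}, identify the hypercomplex intrinsic torsion inside $\mc{H}(\so^*(2n))$ as $\imm(\pi_H)\cong\mc{X}_1\oplus\mc{X}_2\oplus\mc{X}_6$ via \cite[\zeroprog]{CGWPartI}, and conclude. Your explicit appeal to isotypic uniqueness (via \cite[\genCa]{CGWPartI}) to pin down the position of $\imm(\pi_H)$ is a welcome clarification that the paper leaves largely implicit; note only that the paper frames the relevant inclusion as $K=\SO^*(2n)\subset G=\SO^*(2n)\Sp(1)$ (to interpret the $\Sp(1)$-invariant types) rather than your $\SO^*(2n)\subset\Gl(n,\Hn)$, but both routes rest on the same input from Part I and yield the identical conclusion.
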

\subsubsection{Symplectic torsion}
Let us now apply Proposition \ref{spnormal} to obtain the following branching:
\begin{equation}\label{brancso}
		\mc{D}(\sp(4n,\R))|_{\SO^\ast(2n)} \cong	\mc{X}_{234}|_{\SO^*(2n)}=([\Lambda^3\E S^3\Hh]^*\oplus [\K\Hh]^* \oplus [\E\Hh]^*)|_{\SO^*(2n)}= 2\Lambda^3\E^*\oplus\K^*\oplus\E^*
\end{equation}
As a consequence of \cite[\symtorscomp]{CGWPartI} and Proposition \ref{spnormal} we obtain the following theorem.
\begin{theorem}\label{symplecticthem2}
	Let $(H,\omega)$ be an almost hs-H structure with canonical connection $\nabla^{H,\omega}$. Then $\omega$ is symplectic, if and only if $(H, \omega)$ is of type $\mc{X}_{1567}$, that is the torsion components $[\Lambda^3\E S^3\Hh]^*$, $[\K\Hh]^*$ and $[\E\Hh]^*$ of $\nabla^{H,\omega}$ vanish.
\end{theorem}

\subsubsection{Intersection torsion and compatibility torsion}
Observe by Theorems \ref{hypercomplexthm} and \ref{symplecticthem2} that similarly to the $\SO^*(2n)\Sp(1)$-case, the submodule $\mc{X}_2=[\Lambda^3\E S^3\Hh]^*$ appears in both the hypercomplex and the symplectic torsion. As before we call $\mc{X}_2$ the module corresponding to \textsf{intersection torsion}. Moreover, and since we have exhausted the hypercomplex and symplectic torsion, we can now consider the complement of their union in the intrinsic torsion. This submodule will be called the module corresponding to \textsf{compatibility torsion}, and in this case it consists of the mixed type module $\mc{X}_{57}=\mc{X}_5\oplus\mc{X}_7=[S^3_0\E \Hh]^*\oplus [\E\Hh]^*$. 
Consequently by Theorems \ref{hypercomplexthm} and \ref{symplecticthem2} we obtain
\begin{theorem}\label{compator}
	Let $(H,\omega)$ be an almost hs-H structure with canonical connection $\nabla^{H,\omega}$. Then $H$ is hypercomplex and $\omega$ symplectic, if and only if $(H, \omega)$ is of mixed type $\mc{X}_{57}$, that is the torsion of $\nabla^{H,\omega}$ is contained in the submodule $[S^3_0\E \Hh]^*\oplus[\E\Hh]^*$.
\end{theorem}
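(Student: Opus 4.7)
The plan is to obtain Theorem \ref{compator} essentially as the intersection of the two preceding characterisations. The key observation is that both the hypercomplexity of $H$ and the closedness of $\omega$ are phrased as the vanishing of distinct irreducible $\Sp(1)$-invariant summands of the torsion of the \emph{same} canonical connection $\nabla^{H,\omega}$, so that imposing both conditions simultaneously forces the torsion to lie in the intersection of the two corresponding submodules of $\mc{H}(\so^\ast(2n))$.

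First I would invoke Theorem \ref{hypercomplexthm}: $H$ is hypercomplex iff the torsion of $\nabla^{H,\omega}$ lies in $\mc{X}_{3457}=\mc{X}_3\oplus\mc{X}_4\oplus\mc{X}_5\oplus\mc{X}_7$, i.e.\ its $\mc{X}_1$, $\mc{X}_2$ and $\mc{X}_6$ components vanish. Next, Theorem \ref{symplecticthem2} gives that $\omega$ is symplectic iff the torsion lies in $\mc{X}_{1567}=\mc{X}_1\oplus\mc{X}_5\oplus\mc{X}_6\oplus\mc{X}_7$, i.e.\ its $\mc{X}_2$, $\mc{X}_3$ and $\mc{X}_4$ components vanish. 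Since $\mc{H}(\so^\ast(2n))=\bigoplus_{i=1}^{7}\mc{X}_i$ splits into seven pairwise non-isomorphic irreducible $\Sp(1)$-summands (by \cite[\genCar]{CGWPartI}), the projections onto the individual $\mc{X}_i$ are canonically defined, and the two characterisations can be intersected componentwise. This yields
\[
\mc{X}_{3457}\cap\mc{X}_{1567}=\mc{X}_5\oplus\mc{X}_7=\mc{X}_{57}=[S^3_0\E\Hh]^*\oplus[\E\Hh]^*\,,
\]
which is precisely the claim. This is consistent with the informal discussion preceding the theorem, where $\mc{X}_{57}$ is identified as the complement of the union of the hypercomplex and symplectic torsion modules in $\mc{H}(\so^\ast(2n))$, and therefore plays the role of \emph{compatibility torsion}.

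The only point requiring a word of care, rather than a genuine obstacle, is that both characterisations are to be read off from the same minimal adapted connection $\nabla^{H,\omega}$, so that the individual torsion components of this connection are well-defined invariants of the underlying $\SO^\ast(2n)$-structure $(H,\omega)$. This is guaranteed by the uniqueness of $\nabla^{H,\omega}$ under the normalisation conditions of \cite{CGWPartI}; once that is recorded, Theorem \ref{compator} follows immediately from Theorems \ref{hypercomplexthm} and \ref{symplecticthem2} by elementary module intersection.
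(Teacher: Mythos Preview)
Your argument is correct and follows the same route as the paper, which simply records the theorem as an immediate consequence of Theorems \ref{hypercomplexthm} and \ref{symplecticthem2} by intersecting $\mc{X}_{3457}$ with $\mc{X}_{1567}$. One small inaccuracy: the seven $\Sp(1)$-invariant types $\mc{X}_i$ are \emph{not} pairwise non-isomorphic (for instance $\mc{X}_4\cong\mc{X}_7\cong[\E\Hh]^*$), so the componentwise intersection is not justified by an isotypic argument alone; it is the chosen normalisation condition for $\nabla^{H,\omega}$ that fixes the particular complementary copies, exactly as you note at the end and as the paper emphasises in the remark following the theorem.
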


\begin{rem}
	\textnormal{Due to \cite[\genCar\ and \zeroprog]{CGWPartI}, the module $\mc{X}_{57}$ appearing in the last conclusion is uniquely fixed by our normalization condition for $\nabla^{H, \omega}$.}
\end{rem}

%%%%%%%%%%%%%%%%%%%%%%%%%%%%%%%%%%%%%%%%%%%%%%%%%%%%%
%%%%%%%%%%%%%%%%%%%%%%%%%%%%%%%%%%%%%%%%%%%%%%%%%%%%%
%%%%%%%%%%%%%%%%%%%%%%%%%%%%%%%%%%%%%%%%%%%%%%%%%%%%%
%%%%%%%%%%%%%%%%%%%%%%%%%%%%%%%%%%%%%%%%%%%%%%%%%%%%%
%%%%%%%%%%%%%%%%%%section2%%%%%%%%%%%%%%%%%%%%%%%%%
\section{Integrability conditions via distinguished connections}\label{sec2II}

\subsection{General theory}
We recall that there are several distinguished linear connections that can be used to study $\SO^*(2n)$-structures $(H, \omega)$, or $\SO^*(2n)\Sp(1)$-structures $(Q, \omega)$. For instance, in the first part we used the Obata connection $\nabla^{H}$ and the unimodular Oproiu connection $\nabla^{Q, \vol}$. Obviously there are many other connections which can facilitate an examination of the underlying geometries, such as almost symplectic connections $\nabla^{\omega}$, etc. These connections naturally act on the tensors associated to such structures (see Section 2 of \cite{CGWPartI}), but in general they do not preserve them. For example, for a generic almost hs-H structure $(H, \omega)$ we have $\nabla^{H}\omega\neq 0$, $\nabla^{H}h\neq 0$, $\nabla^{H}\Phi\neq 0$, etc. Nevertheless, we can relate the values of the corresponding covariant derivatives with certain intrinsic torsion components, and hence with 1st-order integrability conditions. Before we proceed with details, let us pose a general result which can be used as a guideline for the discussion that follows. We begin with the following definition.

\begin{defi}
Let $K,L\subset\Gl(n, \R)$ be Lie groups. We will say that a $K$-structure $ \mc{P}_K$ on a smooth manifold $M$ is \textsf{compatible} with a $L$-structure $\mc{P}_L$ on $M$, if $ \mc{P}:= \mc{P}_K\cap \mc{P}_L$ is a $G:=K\cap L$-structure on $M$. This means that for each $x\in M$, there exists a frame $u\in \mc{F}_x(M)$ that is adapted for both the $K$- and the $L$-structure.
	Here, as before $\mc{F}(M)$ denotes the frame bundle of $M$.
\end{defi}

\begin{prop}\label{prophol}
	Let $\mc{P}_K$ be a $K$-structure on an $n$-dimensional manifold $M$ with a $K$-connection $\nabla^K$. 
	Fix $f_0\in V$ for a $\Gl(n,\R)$-module $V$ with stabilizer 
	\[
	L=\{l\in \Gl(n,\R): l\cdot f_0=f_0\}
	\]
	 such that there exists a $G$-invariant direct sum decomposition $\fr{k}=\fr{g}\oplus\fr{m}$, where $G:=K\cap L$.
	Let ${\sf F }$ be a tensor field represented by the $\Gl(n,\R)$-equivariant map $F:\mc{F}(M) \to V$, such that $\mc{P}_L= F^{-1}(f_0)$ is an $L$-structure on $M$ and $\mc{P}_K$ and $\mc{P}_L$ are compatible.
	Then, the following holds for the corresponding $G$-structure $\mc{P}:= \mc{P}_K\cap \mc{P}_L$:\\
\textsf{1)} If $\nabla^L$ is a $L$-connection, then the $(1, 2)$-tensor $\mathscr{A}:=\nabla^L-\nabla^K$ takes values in $T^*M\otimes (\fr{l}+\fr{k})_{\mc{P}}$, where 
\[
(\fr{l}+\fr{k})_{\mc{P}}:= \mc{P}\times_{G}(\fr{l}+\fr{k})\,,
\]
and $\fr{l}, \fr{k}$ are the Lie algebras of $L, K$, respectively.\\
\textsf{2)} The covariant derivative $\nabla^K \mathsf{F}$ is a smooth section of a bundle $\mathscr{E}\to M$ isomorphic to the associated bundle $T^*M\otimes\big(\fr{k}/\fr{g}\big)_{\mc{P}}$, and $\fr{k}/\fr{g}$ is identified with a subspace of $V$ via the action $A \mapsto A \cdot f_0$.\\
\textsf{3)} Let ${\sf s} : \mathscr{E}\to T^*M\otimes\fr{m}_{\mc{P}}$ be a splitting of the natural projection ${\sf p} : T^*M\otimes\fr{k}_{\mc{P}}\to\mathscr{E}$, that is, 
\[
{\sf p}\circ{\sf s}=\id_{\mathscr{E}}\,.
\]
Then, the $K$-connection 
\begin{equation}\label{ncheck}
\nabla:=\nabla^K-\sf{s}(\nabla^K{\sf F})
\end{equation}
preserves ${\sf F}$, i.e., $\nabla{\sf F}=0$. So in particular, $\nabla$ is a $G$-connection.\\
\textsf{4)} Both the intrinsic torsion of the $L$-structure $\mc{P}_L$ and of the $G$-structure $\mc{P}$ are given by appropriate projections of the torsion 
\[
T=T^K-\delta\big({\sf s}(\nabla^K{\sf F})\big)
\]
of $\nabla$ to the corresponding intrinsic torsion module. Here, $\delta : T^*M\otimes \fr{k}_{\mc{P}} \to \Tor(M)$ is the Spencer alternation operator related to the $K$-structure and $T^K$ is the torsion of $\nabla^K$.
\end{prop}
\begin{proof}
Let us choose a (local) frame $u\in\Gamma(\mc{P})$. 
At $u$ the connection 1-form of $\nabla^L$ takes values in $\fr{l}$, and the connection 1-form of $\nabla^K$ in $\fr{k}$. Hence, it follows that their difference tensor $\mathscr{A}$ is a smooth section of $T^*M\otimes (\fr{l}+\fr{k})_{\mc{P}}$. This proves the first claim. As a consequence, and since the tensor ${\sf F}$ has constant coordinates $f_0$ in the frame $u$, the covariant derivative $\nabla^K{\sf F}$ is 
\[
\nabla^K f_0 =\nabla^L f_0-\mathscr{A}\cdot f_0=-\mathscr{A}\cdot f_0\,,
\]
where $\mathscr{A}\cdot f_0$ encodes the natural action of 1-forms with values in $\Ed(TM)$ on the tensor bundle over $M$. The latter action is linear, with kernel given by 1-forms with values in the Lie algebra $\fr{l}$. Therefore its image $\mathscr{A}\cdot{\sf F}$ is a section of a bundle $\mathscr{E}$ over $M$, which corresponds to the section ${\sf p}(\mathscr{A})$ of the following associated bundle
\[
T^*M\otimes \big((\fr{k}+\fr{l})/\fr{l}\big)_{\mc{P}} = T^*M\otimes\big(\fr{k}/\fr{g}\big)_{\mc{P}}\,.
\]
This provides the isomorphism 
\[
\mathscr{A}\cdot{{\sf F}}\in\Gamma(\mathscr{E})\longmapsto {\sf p}(\mathscr{A})\in\Gamma(T^*M\otimes\big(\fr{k}/\fr{g}\big)_{\mc{P}})\,.
\]
Next, under our assumptions and by the definition of $\nabla$ we have
\[
\nabla{\sf F}:=\nabla^K{\sf F}-\big({\sf s}(\nabla^K{\sf F})\big)\cdot{\sf F}=\nabla^K{\sf F}-{\sf p}\big({\sf s}(\nabla^K{\sf F})\big)=\nabla^K{\sf F}-\nabla^K{\sf F}=0\,.
\]
This yields the third assertion, which moreover shows that the connection $\nabla$ is both an $L$-connection and a $G$-connection. Thus the intrinsic torsion of the $L$-structure and the intrinsic torsion of the $G$-structure are both given by the projection of $T$ to the corresponding intrinsic torsion module.
This proves the final assertion.
The stated formula for the expression of $T$ follows easily by (\ref{ncheck}).
\end{proof}
We may apply Proposition \ref{prophol} to $\SO^*(2n)$- and $\SO^*(2n)\Sp(1)$-structures.
In Table \ref{tabKL} we summarize the Lie groups $K,L$ provided by the defining tensors ${\sf F}$ of $\SO^*(2n)$- and $\SO^*(2n)\Sp(1)$-structures.
However, let us postpone that, and initially illustrate Proposition \ref{prophol} by a more characteristic example, related to metric connections.
\begin{example}
Set $K:=\Gl(n, \R)$, $L:=\Oo(n)$, and let $g$ be a Riemannian metric on a $n$-dimensional manifold $M$. Then $G=K\cap L=\Oo(n)=L$ and we denote by $\mc{O}(M)\to M$ the orthonormal frame bundle. A reductive complement of $\fr{o}(n)$ in $\fr{k}$ coincides with the space of symmetric 2-tensors on $\R^n$, that is 
\[
\fr{gl}(n, \R)=\fr{o}(n)\oplus\fr{m}\cong\Lambda^{2}(\R^{n})^*\oplus S^{2}(\R^n)^*\,,\qquad \fr{m}\cong S^{2}(\R^n)^*\,.
\]
Let $\nabla^{K}$ be a torsion-free linear connection on $M$. Then, the covariant derivative $\nabla^{K}g$ takes values in $T^*M\otimes\fr{m}_{\mc{O}(M)}$. For some (1, 2)-tensor field $A$ on $M$, it is easy to see that the linear connection $\nabla=\nabla^{K}+A$ is a metric connection if and only if
\[
(\nabla^{K}_{X}g)(Y, Z)=g(A_{X}Y, Z)+g(Y, A_{X}Z)=A(X, Y, Z)+A(X, Z, Y)
\]
where we set $A(X, Y, Z)=g(A_{X}Y, Z)$ and $A_{X}Y=A(X, Y)$. Since the quantity in the left hand side should be symmetric in the last two indices, the above relation is equivalent to say that
\begin{equation}\label{metricA}
g(A(X, Y), Z)=\frac{1}{2}(\nabla^{K}_Xg)(Y, Z)\,,\quad \forall \ X, Y, Z\in\Gamma(TM)\,.
\end{equation}
In other words, for the splitting map ${\sf s} : \Gamma(\mathscr{E})\to \Gamma(T^*M\otimes\fr{m}_{\mc{O}(M)})$ we get that the image $-{\sf s}(\nabla^{K}g)$ is the (1, 2)-tensor field $A$ defined by (\ref{metricA}). 
Therefore
\[
	\nabla:=\nabla^{K}-{\sf s}(\nabla^{K}g)=\nabla^{K}+A
\]
is a metric connection with respect to $g$, that is $\nabla g=0$. If we want to obtain the Levi-Civita connection, i.e., $\nabla=\nabla^{g}$, we should choose a different complement $S^2T^*M\otimes TM$. This is because in this case $\nabla$ is also a torsion-free connection, so the difference $\nabla-\nabla^{K}$ should take values in $S^2T^*M\otimes TM$. In addition, the related isomorphism ${\sf s} : \mathscr{E}\to S^2T^*M\otimes TM$ is encoded by the well-known Koszul formula.
\end{example}

\begin{table}[h]
\begin{tabular}{l|c|c|c|c|c}
${\sf F}$& $\omega$& $\omega$& $H$& $h$& $\Phi$\\
\hline
$K$& $\Gl(n,\Hn)$ & $\Gl(n,\Hn)\Sp(1)$&$\Sp(4n,\R)$& $\Gl(4n,\R)$ & $\Gl(4n,\R)$\\
\hline
$L$& $\Sp(4n,\R)$& $\Sp(4n,\R)$& $\Gl(n,\Hn)$& $\SO^*(2n)\Sp(1)$& $\SO^*(2n)\Sp(1)$ \\
\hline
$G$& $\SO^*(2n)$&$\SO^*(2n)\Sp(1)$& $\SO^*(2n)$& $\SO^*(2n)\Sp(1)$& $\SO^*(2n)\Sp(1)$ \\
\end{tabular}
\vspace{0.5cm}
\caption{\small Examples of groups $K, L, G=K\cap L$ and the corresponding tensor ${\sf F}$}\label{tabKL}
\end{table}
\begin{rem}
\textnormal{We proceed with a detailed investigation of most of the cases from Table \ref{tabKL}. Let us emphasize on the fact that the connection $\nabla$ of Proposition \ref{prophol} does {\it not} have to be a minimal connection. However, for particular cases (as for example the connection $\nabla^{H, \omega}$ introduced in \cite[\starconnection]{CGWPartI}), one can derive minimality with respect to certain normalization conditions, as we do in \cite[\zeroprog]{CGWPartI}.
On the other hand, in general the explicit descriptions of the isomorphism $\mathscr{E}\cong T^*M\otimes\big(\fr{k}/\fr{g}\big)_{\mc{P}}$, and also of the splitting map ${\sf s}$ appearing in Proposition \ref{prophol}, are both highly non-trivial tasks.}
\end{rem}

\subsection{The contribution of the Obata connection}

\smallskip
Let us consider the following situation:
\[
	K:=\Gl(n, \Hn)\,,\quad L:=\Sp(4n, \R)\,,\quad {{\sf F}}:=\omega, \quad \nabla^K=\nabla^{H}\,, 
\]
where $H$ is an almost hypercomplex structure on a $4n$-dimensional manifold $M$ and $\omega$ is a scalar 2-form on $M$ with respect to $H$. This means that $G=K\cap L=\SO^*(2n)$ and we write $\pi : \mc{P}\to M$ for the corresponding $\SO^*(2n)$-structure on $M$. In this case we have $\fr{m}\cong[S^2\E]^*$ and $\nabla^{H}\omega$ is a smooth section of the bundle
\[
	\mathscr{E}\cong T^*M\otimes \fr{m}_{\mc{P}}=T^*M\otimes ([S^2\E]^*)_{\mc{P}}\,.
\]
For the splitting map ${\sf s} : \Gamma(\mathscr{E})\to \Gamma(T^*M\otimes\fr{m}_{\mc{P}})$ we get that the image $-{\sf s}(\nabla^{H}\omega)$ is the (1, 2)-tensor field $A$ introduced in \cite[\starconnection]{CGWPartI}, satisfying
\begin{equation}\label{omegA}
\omega(A(X, Y), Z)=\frac{1}{2}(\nabla^{H}_X\omega)(Y, Z)\,,\quad \forall \ X, Y, Z\in\Gamma(TM)\,.
\end{equation}
Consequently, we deduce that the connection
\[
	\nabla=\nabla^{H}-{\sf s}(\nabla^{H}\omega)=\nabla^{H}+A=:\nabla^{H, \omega}
\]
is a $\SO^*(2n)$-connection, and hence $\nabla^{H, \omega}{{\sf F}}=\nabla^{H, \omega}\omega=0$. This provides an alternative proof of \cite[\starconnection]{CGWPartI} for the fact that $\nabla^{H, \omega}$ is an adapted connection. 

Note that when the scalar 2-form $\omega$ is $\nabla^{H}$-parallel, i.e.,
\[
	(\nabla^{H}_X\omega)(Y, Z)=0\,,\quad \forall \ X, Y, Z\in\Gamma(TM)\,,
\]
then by the non-degeneracy of $\omega$ we obtain the vanishing of $A$, and hence $\nabla^{H, \omega}=\nabla^{H}$. We see that
\begin{lem}\label{nocodaz}
The scalar 2-form $\omega$ is $\nabla^{H}$-parallel if and only if 
\begin{equation}\label{complic1}
	(\nabla_{X}^{H}\omega)(Y, Z)=(\nabla_{Y}^{H}\omega)(X, Z)\,,
\end{equation}
for any $X, Y, Z\in\Gamma(TM)$. 
\end{lem}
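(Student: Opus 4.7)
The forward implication is trivial: if $\nabla^H\omega=0$, then both sides of (\ref{complic1}) vanish identically. So the content lies entirely in the converse. My plan is to introduce the $(0,3)$-tensor
\[
T(X,Y,Z) := (\nabla^H_X\omega)(Y,Z),
\]
and to exploit two symmetry properties of $T$ simultaneously. On the one hand, since $\omega$ is a 2-form, $T$ is antisymmetric in its last two slots, i.e. $T(X,Y,Z)=-T(X,Z,Y)$. On the other hand, the hypothesis (\ref{complic1}) asserts precisely that $T$ is symmetric in its first two slots, i.e. $T(X,Y,Z)=T(Y,X,Z)$.

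The heart of the argument is then a standard algebraic observation: any $(0,3)$-tensor that is symmetric in one adjacent pair of slots and antisymmetric in the other adjacent pair must vanish. Concretely, I would alternate the two symmetries around the cyclic permutation of the three slots, obtaining the chain
\[
T(X,Y,Z)=T(Y,X,Z)=-T(Y,Z,X)=-T(Z,Y,X)=T(Z,X,Y)=T(X,Z,Y)=-T(X,Y,Z),
\]
which forces $2T\equiv 0$, hence $T\equiv 0$, i.e.\ $\nabla^H\omega=0$.

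I do not foresee any real obstacle. Both symmetries are immediate from the data (the antisymmetry of $\omega$ and the hypothesis), and the combinatorial conclusion is a one-line computation. It is perhaps worth emphasizing, as a sanity check, that the argument does not use any property specific to the Obata connection $\nabla^H$ beyond the fact that covariant derivation produces a tensor of the stated type; the same conclusion would hold with $\nabla^H$ replaced by any linear connection on $M$. The role of $\nabla^H$ will, however, become essential in the subsequent discussion where (\ref{omegA}) is used to relate $A$ and $\nabla^H\omega$ to the intrinsic torsion of the $\SO^*(2n)$-structure.
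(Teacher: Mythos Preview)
Your proof is correct, but it proceeds by a different and in fact more elementary route than the paper's. The paper observes that the hypothesis (\ref{complic1}) is equivalent to the symmetry of the tensor $A$ defined by (\ref{omegA}); since $A$ is already $\gl(n,\Hn)$-valued (because $\nabla^{H}$ preserves $H$), symmetry forces $A$ to lie in the first prolongation $\gl(n,\Hn)^{(1)}$, which is known to vanish. Your argument bypasses this structural viewpoint entirely: you simply exploit that $(\nabla^{H}\omega)$ is antisymmetric in its last two slots and, by hypothesis, symmetric in its first two, and run the standard six-term cyclic chain to conclude it vanishes. As you correctly point out, your argument uses nothing about $\nabla^{H}$ beyond its being a linear connection, so the statement holds for arbitrary $\nabla$. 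The paper's approach, while less elementary and seemingly less general, has the virtue of situating the lemma within the prolongation framework that governs the rest of the paper's analysis of intrinsic torsion; your approach has the virtue of being entirely self-contained.
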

\begin{proof}
We prove only the converse direction, which is less trivial. Assume that (\ref{complic1}) holds. This means that
the tensor field $A$ takes values in the first prolongation $\fr{gl}(n, \Hn)^{(1)}$, which is trivial. So the claims follows.
\end{proof}

Let us now examine 1st-order integrability conditions in terms of the Obata connection $\nabla^{H}$. 
\begin{prop}\label{THo}
The torsion of $\nabla^{H, \omega}$ is given by
\[
T^{H, \omega}(X, Y)=T^{H}(X, Y)+\delta(A)(X, Y)\,, \quad\forall \ X, Y\in\Gamma(TM)\,.
\]
Thus, 
the adapted connection $\nabla^{H, \omega}$ is torsion-free if and only if $T^{H}$ satisfies 
\begin{equation}\label{omegTH}
	T^{H}=0\,,\quad \text{and}\quad \nabla^{H}\omega=0\,.
\end{equation}
In other words, $T^{H, \omega}=0$ if and only if $H$ is 1-integrable and $\nabla^{H, \omega}=\nabla^{H}$.
\end{prop}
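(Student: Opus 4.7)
The plan is to derive the torsion formula first and then handle both directions of the equivalence. For the first claim, I would use the general identity that under any change of connection $\nabla'=\nabla+A$ with $A$ a $(1,2)$-tensor, the torsions differ by the alternation of $A$:
\[
T^{\nabla'}(X,Y)-T^\nabla(X,Y)=A(X,Y)-A(Y,X).
\]
Applied to $\nabla^{H,\omega}=\nabla^H+A$, this yields exactly $T^{H,\omega}(X,Y)=T^H(X,Y)+\delta(A)(X,Y)$, where $\delta$ is the Spencer alternation operator.

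The backward implication of the equivalence is immediate. If $\nabla^H\omega=0$, then equation (\ref{omegA}) together with the nondegeneracy of $\omega$ forces $A=0$, so $\nabla^{H,\omega}=\nabla^H$, and consequently $T^{H,\omega}=T^H=0$.

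For the forward direction, suppose $T^{H,\omega}=0$. Since $\nabla^{H,\omega}$ is a $\SO^*(2n)$-connection and $\SO^*(2n)\subset\Gl(n,\Hn)$, it is in particular a torsion-free $\Gl(n,\Hn)$-connection and hence preserves the almost hypercomplex structure $H$. By Obata's classical theorem, the existence of a torsion-free $\Gl(n,\Hn)$-connection on $(M,H)$ is equivalent to the vanishing of the Nijenhuis tensor of $H$, i.e., to the 1-integrability of $H$; in that case $\nabla^H$ itself is torsion-free, giving $T^H=0$. Substituting into the torsion formula we obtain $\delta(A)=0$, so $A$ is symmetric in its first two entries. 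By (\ref{omegA}) this symmetry is precisely the Codazzi-type identity (\ref{complic1}), so Lemma \ref{nocodaz} yields $\nabla^H\omega=0$, which in turn forces $A=0$ and $\nabla^{H,\omega}=\nabla^H$. The main subtlety is the appeal to Obata's theorem, to recover 1-integrability of $H$ from the mere existence of a torsion-free reduction to $\Gl(n,\Hn)$; the rest is bookkeeping between the torsion formula, (\ref{omegA}), and Lemma \ref{nocodaz}.
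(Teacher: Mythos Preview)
Your argument is correct, but the route differs from the paper's. For the forward implication the paper does not invoke Obata's theorem directly; instead it uses the representation-theoretic splitting of the intrinsic torsion established in Part~I. Concretely, Theorem~\ref{hypercomplexthm} gives $T^{H}\in\mc{X}_{126}$, while \cite[\zeroprog]{CGWPartI} gives $\delta(A)\in\mc{X}_{3457}$; since these are complementary $\SO^*(2n)\Sp(1)$-submodules of the torsion space, the equation $T^{H}+\delta(A)=0$ forces each summand to vanish separately, and then Lemma~\ref{nocodaz} finishes as you do. Your approach is more self-contained and classical: it avoids the module decomposition entirely by noting that a torsion-free $\SO^*(2n)$-connection is in particular a torsion-free $\Gl(n,\Hn)$-connection, whence integrability of $H$ and $T^{H}=0$ follow from Obata (equivalently, from $\fr{gl}(n,\Hn)^{(1)}=0$). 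The paper's argument, on the other hand, is what one expects in context, since the whole section is organized around the $\mc{X}_i$-types, and it exhibits explicitly which torsion components carry the hypercomplex versus the symplectic obstruction.
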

\begin{proof}
The first statement is easy and implies that $T^{H, \omega}=0$ if and only if 
\[
	\omega(T^{H}(X, Y), Z)+\frac{1}{2}\Big((\nabla_{X}^{H}\omega)(Y, Z)-(\nabla_{Y}^{H}\omega)(X, Z)\Big)=0\,,
\]
for any $X, Y, Z\in\Gamma(TM)$. By Theorem \ref{hypercomplexthm} we have that $ T^{H}\in\mc{X}_{126}$ and by \cite[\zeroprog]{CGWPartI} we conclude that $\delta(A)\in\mc{X}_{3457}$. 
Since they belong to different components, they should vanish simultaneously and in combination with Lemma \ref{nocodaz} we derive the stated conditions in (\ref{omegTH}). In particular, note that $T^{H, \omega}=0$, if and only if 
$\delta(A)=0$ identically, i.e., $A$ is symmetric and $T^{H}$ vanishes. By Lemma \ref{nocodaz} the first condition is equivalent to say that $A=0$, i.e., $\nabla^{H, \omega}=\nabla^{H}$.
\end{proof}
Let us now examine the closedness of the scalar 2-form $\omega$. 
\begin{prop}\label{dwH}
Let $(M, H=\{J_{a} : a=1, 2, 3\}, \omega)$ be an almost hs-H manifold. Then, the differential of $\omega$ satisfies 
\begin{equation}\label{domega}
\dd\omega(X, Y, Z)=\pi_{\omega}(T^{H, \omega})(X, Y, Z):=\fr{S}_{X, Y, Z}\omega(T^{H, \omega}(X, Y), Z)\,, 
\end{equation}
for any $X, Y, Z\in\Gamma(TM)$, and hence if $T^{H, \omega}=0$ then $\dd\omega=0$. More general, $\dd\omega=0$ or equivalently $(M, H, \omega)$ is of type $\mc{X}_{1567}$, if and only if 
\[
\pi_{\omega}(T^{H})=0\,,\quad\text{and}\quad \fr{S}_{X, Y, Z}(\nabla^{H}_{X}\omega)(Y, Z)=0\,,
\]
for any $X, Y, Z\in\Gamma(TM)$. 
\end{prop}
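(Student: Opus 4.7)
The plan is to relate $\dd\omega$ to the covariant derivatives and torsions of the two natural connections $\nabla^{H,\omega}$ and $\nabla^H$, and then isolate the contributions of each using Theorem \ref{symplecticthem2} together with the branching in \eqref{brancso}. First I would establish the standard identity: for any linear connection $\nabla$ on $M$ with torsion $T$,
\[
\dd\omega(X,Y,Z) \;=\; \fr{S}_{X,Y,Z}(\nabla_X\omega)(Y,Z) \;+\; \fr{S}_{X,Y,Z}\,\omega(T(X,Y),Z)\,,
\]
for all $X,Y,Z \in \Gamma(TM)$. This follows from the invariant formula for $\dd\omega$ by substituting $X\omega(Y,Z) = (\nabla_X\omega)(Y,Z) + \omega(\nabla_X Y, Z) + \omega(Y, \nabla_X Z)$ and $[X,Y] = \nabla_X Y - \nabla_Y X - T(X,Y)$; all the $\nabla Y$- and $\nabla Z$-terms cancel under the cyclic permutation thanks to the skew-symmetry of $\omega$, leaving only the $\nabla\omega$- and torsion-contributions.

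Applying this to $\nabla = \nabla^{H,\omega}$ and using $\nabla^{H,\omega}\omega = 0$ immediately yields \eqref{domega}, and thus $T^{H,\omega} = 0$ forces $\dd\omega = 0$. Applying the same identity to the Obata connection $\nabla^H$ instead gives
\[
\dd\omega(X,Y,Z) \;=\; \fr{S}_{X,Y,Z}(\nabla^H_X\omega)(Y,Z) \;+\; \pi_\omega(T^H)(X,Y,Z)\,,
\]
from which the implication ``$\Leftarrow$'' of the second assertion is immediate: if both summands on the right vanish then $\dd\omega = 0$, and Theorem \ref{symplecticthem2} identifies the type as $\mc{X}_{1567}$.

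The non-trivial direction asks that $\dd\omega = 0$ forces each of the two summands to vanish separately. By Proposition \ref{THo} one has the decompositions $T^H \in \mc{X}_{126}$ and $\delta(A) \in \mc{X}_{3457}$, and comparing the two expressions for $\dd\omega$ identifies $\fr{S}_{X,Y,Z}(\nabla^H_X\omega)(Y,Z) = \pi_\omega(\delta(A))$. Since $\pi_\omega$ is $\SO^*(2n)\Sp(1)$-equivariant with target $\Lambda^3 T^*M$, whose $\SO^*(2n)\Sp(1)$-decomposition is $\mc{X}_{234}$ by Proposition \ref{spnormal} and \eqref{brancso}, Schur's lemma forces $\pi_\omega$ to annihilate the isotypes $\mc{X}_1, \mc{X}_5, \mc{X}_6, \mc{X}_7$. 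Consequently $\pi_\omega(T^H)$ takes values in the $\mc{X}_2$-isotype while $\pi_\omega(\delta(A))$ takes values in the $\mc{X}_{34}$-isotype, and since these are complementary summands of $\Lambda^3 T^*M$, their sum vanishes iff each of them vanishes. The main obstacle I expect is making this isotypic-independence step fully precise, in particular verifying cleanly that $\pi_\omega$ annihilates exactly the modules $\mc{X}_{1567}$ and is non-zero on each factor of $\mc{X}_{234}$; this can be handled either via Schur's lemma combined with \eqref{brancso} and the $\Sp(1)$-equivariance noted in \cite[\genCar]{CGWPartI}, or by a direct frame-level computation on a skew-Hermitian basis of $[\E\Hh]$.
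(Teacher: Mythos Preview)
Your proof is correct and follows essentially the same approach as the paper: both use the identity $\dd\omega = \fr{S}(\nabla\omega) + \pi_\omega(T^\nabla)$ applied first to $\nabla^{H,\omega}$ (giving \eqref{domega} since $\nabla^{H,\omega}\omega=0$), and then relate the two summands coming from $\nabla^H$ via $T^{H,\omega}=T^H+\delta(A)$ with $T^H\in\mc{X}_{126}$ and $\delta(A)\in\mc{X}_{3457}$. The paper's own proof is terser on the ``only if'' direction, essentially just recording that $\fr{S}_{X,Y,Z}(\nabla^H_X\omega)(Y,Z)=\pi_\omega(\delta(A))$ and leaving the isotypic separation implicit via Proposition~\ref{THo}; you spell this out more carefully. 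Regarding the obstacle you flag about the two $[\E\Hh]^*$ copies $\mc{X}_4$ and $\mc{X}_7$: it is not an issue here, since $\mc{X}_{126}=[\K S^3\Hh]^*\oplus[\Lambda^3\E S^3\Hh]^*\oplus[\E S^3\Hh]^*$ contains no $[\E\Hh]^*$ isotype at all, so $\pi_\omega(T^H)$ lands purely in the $[\Lambda^3\E S^3\Hh]^*$ summand of $\Lambda^3 T^*M$ while $\pi_\omega(\delta(A))$ lands in the complementary $[\K\Hh]^*\oplus[\E\Hh]^*$ summand, and the separation follows.
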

\begin{proof}
The linear connection $\nabla^{H, \omega}$ has torsion given by $T^{H, \omega}$ by definition, and consequently the differential of $\omega$ is given by 
\begin{eqnarray*}
\dd\omega(X, Y, Z)&=&(\nabla^{H, \omega}_{X}\omega)(Y, Z)-(\nabla^{H, \omega}_{Y}\omega)(X, Z)+(\nabla^{H, \omega}_{Z}\omega)(X, Y)\\
&&+\omega(T^{H, \omega}(X, Y), Z)-\omega(T^{H, \omega}(X, Z), Y)+\omega(T^{H, \omega}(Y, Z), X) \\
&=&\fr{S}_{X, Y, Z}\omega(T^{H, \omega}(X, Y), Z)\,,
\end{eqnarray*}
where the second equality follows since $\nabla^{H, \omega}$ preserves the pair $(H, \omega)$ (and so the first line must vanish). Finally, regarding the closedness of $\omega$, note that the second condition $\fr{S}_{X, Y, Z}(\nabla^{H}_{X}\omega)(Y, Z)=0$ is equivalent to $\fr{S}_{X, Y, Z}\omega(\delta(A)(X, Y), Z)=0$, for any $X, Y, Z\in\Gamma(TM)$. 
\end{proof}
We summarize our conclusions about the scalar 2-form $\omega$ when the torsion of $\nabla^{H, \omega}$ vanishes. The almost symplectic structure induced by the scalar 2-form $\omega$ is closed. Hence, $(\omega, \nabla^{H, \omega})$ is a Fedosov structure in the sense that $\nabla^{H, \omega}$ is a compatible torsion-free connection of the symplectic form $\omega$. However, we should mention that the Darboux coordinate frames do not sit in the corresponding reduction to $\SO^*(2n)$, so there is a smooth map
\[
	f : M\to \Sp(4n,\R)/\SO^*(2n)
\]
describing this difference. Such an obstruction can be viewed as a smooth section of the quotient bundle $\mc{S}(M)/\SO^*(2n)\cong \mc{S}(M)\times_{\Sp(4n, \R)}(\Sp(4n,\R)/\SO^*(2n))$. 
Here, $\mc{S}(M)$ denotes the symplectic frame bundle, i.e., the principal $\Sp(4n, \R)$-bundle over $M$ consisting of all symplectic bases with respect to $\omega$.

\subsection{The contribution of Oproiu connections}

\smallskip
As a second main application of Proposition \ref{prophol} we consider the following situation:
\[
	K:=\Gl(n, \Hn)\Sp(1)\,,\quad L:=\Sp(4n, \R)\,,\quad {{\sf F}}:=\omega, \quad \nabla^K=\nabla^{Q}\,, 
\]
where $Q$ is an almost quaternionic structure on a $4n$-dimensional manifold $M$ and $\omega$ is a scalar 2-form on $M$ with respect to $Q$. We have $G=K\cap L=\SO^*(2n)\Sp(1)$ and denote by $\pi : \mc{Q}\to M$ the corresponding $\SO^*(2n)\Sp(1)$-structure on $M$. Note that an Oproiu connection $\nabla^{Q}$ plays the role of the $K$-connection. 
By \cite{CGWPartI} we know that the covariant derivative $\nabla^Q \omega$ has values in
\[
[\E\Hh]^*\otimes (\langle \omega_0\rangle \oplus [S^2_0\E]^*)\cong [\E\Hh]^*\otimes (\R \cdot\Id \ \oplus \frac{\fr{sl}(n,\Hn)}{\so^\ast(2n)})\,.
\]
This corresponds to $\fr{m}=\R \cdot\Id \ \oplus \frac{\fr{sl}(n,\Hn)}{\so^\ast(2n)}$ and 
\[
	\mathscr{E}\cong T^*M\otimes \fr{m}_{\mc{Q}}\,.
\]
However, in Theorem \cite[\starconnectionsp]{CGWPartI}, for the map ${\sf p} : T^*M\otimes\cc\fr{gl}(n, \Hn)\oplus\fr{sp}(1)\rr_{\mc{Q}}\to\mathscr{E}$ we have chosen the following splitting
\[
	{\sf s} : \mathscr{E}\to\cc\ke \delta\oplus ([\E\Hh]^*\otimes [S^2_0\E]^*)\rr_{\mc{Q}}\subset T^*M\otimes\cc\fr{gl}(n, \Hn)\oplus\fr{sp}(1)\rr_{\mc{Q}}\,.
\]
This is the complement of $T^*M\otimes\cc\fr{so}^*(2n)\oplus\fr{sp}(1)\rr_{\mc{Q}}$ in $T^*M\otimes\cc\fr{gl}(n, \Hn)\oplus\fr{sp}(1)\rr_{\mc{Q}}$, and one should observe that this is a different complement than 
\[
	T^*M\otimes\fr{m}_{\mc{Q}}\,.
\]
In particular, by the proof of \cite[\starconnectionsp]{CGWPartI} we deduce that ${\sf s}={\sf s}_1+{\sf s}_2$, where ${\sf s}_1$ takes values in $\ke \delta$ and ${\sf s}_2$ takes values in $[\E\Hh]^*\otimes [S^2_0\E]^*$, respectively. These are respectively given by
\begin{eqnarray*}
{\sf s}_1(\nabla^Q \omega)&=&-\frac{\Tr_2(A)}{4(n+1)} \otimes \id+\pi_A(\omega\otimes \frac{\Tr_2(A)^T}{(n+1)})- \pi_S(\omega\otimes \frac{\Tr_2(A)^T}{(n+1)})+\sum_{a=1}^3 \frac{\Tr_2(A)}{4(n+1)}\circ J_a\otimes J_a\\
{\sf s}_2(\nabla^Q \omega)&=&-A+\frac{\Tr_2(A)}{4(n+1)} \otimes \id-\pi_A(\omega\otimes \frac{\Tr_2(A)^T}{(n+1)})\,,
\end{eqnarray*}
where $A$ is a (1, 2)-tensor field satisfying
$
\omega(A(X, Y), Z)=\frac{1}{2}(\nabla^{Q}_X\omega)(Y, Z)
$, for any $X,Y,Z\in \Gamma(TM)$, $\Tr_{2}(A)(X):=\Tr(A(X, \cdot))$, and $\pi_{A}$ is the projection defined by 
\[
\pi_A(\omega\otimes Z)(X,Y):=Asym\Big(\pi_{1, 1}(\omega(X,.) \otimes Z)\Big)Y\,,
\]
see \cite[Section 3.2]{CGWPartI} for details. The reason for this decomposition is that 
\[
\nabla^{Q,\vol}=\nabla^{Q}-{\sf s}_1(\nabla^Q \omega)
\]
is the unimodular Oproiu connection with respect to $\vol=\omega^{2n}$. Moreover, we see that the tensor field $A^{\vol}$ defined by the relation 
\[
\omega\cc A^{\vol}(X, Y), Z\rr=\frac{1}{2}(\nabla^{Q,\vol}_{X}\omega)(Y, Z)\,,\quad \forall \ X, Y, Z\in\Gamma(TM)
\]
satisfies
\[
A^{\vol}=-{\sf s}_2(\nabla^Q \omega) \quad {\rm and} \quad \Tr_2(A^{\vol})=0\,.
\]
As a conclusion, we arrive to the formula 
\[
\nabla^{Q,\omega}=\nabla^Q-{\sf s}(\nabla^Q \omega)=\nabla^{Q}-{\sf s}_1(\nabla^Q \omega)-{\sf s}_2(\nabla^Q \omega)=\nabla^{Q,\vol}+A^{\vol}
\]
for the $\SO^*(2n)\Sp(1)$-connection $\nabla^{Q, \omega}$ from \cite[\starconnectionsp]{CGWPartI}. Let us now pose the following 

\begin{lem}\label{deltaAvol}
The condition $\delta(A^{\vol})=0$ is equivalent to 
\begin{eqnarray*}
(\nabla^{Q}_X\omega)(Y,Z)&=&\frac{\Tr_2(A)(X)}{2(n+1)} \omega(Y,Z)-\omega(\pi_A(\omega(X,.)\otimes \frac{2\Tr_2(A)^T}{(n+1)})Y,Z)\quad\quad ({\sf i}) \\
&=&\frac{1}{4(n+1)}\Big(\Tr_2(A)(X)\omega( Y,Z)+\sum_a \Tr_2(A)(J_aX) g_{J_a}(Y,Z)-\omega(X,Y) \Tr_2(A)(Z)\\
&&-\sum_a g_{J_a}(X,Y)\Tr_2(A)(J_a Z)\Big)\quad\quad\quad\quad\quad\quad\quad\quad\quad\quad\quad\quad({\sf ii})\\
&=&\frac{\Tr_2(A)\cc h(Y,Z)X-h(X,Y)Z\rr}{4(n+1)}\quad\quad \quad\quad\quad\quad\quad\quad\quad\quad\quad\ ({\sf iii})
\end{eqnarray*}
for all $X,Y,Z\in \Gamma(TM)$, where the tensor field $A$ is given by 
\begin{equation}\label{wA}
\omega(A(X, Y), Z)=\frac{1}{2}(\nabla^{Q}_X\omega)(Y, Z)\,, \forall \ X,Y,Z\in \Gamma(TM)\,.
\end{equation}
In particular,
\begin{equation}\label{Avol}
\omega\cc A^{\vol}(X, Y), Z\rr=\frac12(\nabla^{Q}_X\omega)(Y,Z)-\frac{\Tr_2(A)\cc h(Y,Z)X-h(X,Y)Z\rr}{8(n+1)}
\end{equation}
for any $X,Y,Z\in \Gamma(TM).$
\end{lem}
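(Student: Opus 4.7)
The plan is to first derive (\ref{Avol}) by unpacking the definitions of $A^{\vol}$ and ${\sf s}_2$, and then to establish the equivalence $\delta(A^{\vol}) = 0 \iff$ (iii) --- from which (i) and (ii) follow as algebraic reformulations.

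For (\ref{Avol}), I would start from $A^{\vol} = -{\sf s}_2(\nabla^Q \omega)$ and the explicit formula for ${\sf s}_2$ stated before the Lemma. Applying $\omega(\cdot, Z)$ and substituting $\omega(A(X,Y),Z) = \frac{1}{2}(\nabla^Q_X \omega)(Y,Z)$ yields an expression for $\omega(A^{\vol}(X,Y), Z)$ in terms of $\nabla^Q \omega$, $\omega$, the metrics $g_{J_a}$, and $\Tr_2(A)$. The key simplification is the identity
\[
\Tr_2(A)(X)\,\omega(Y,Z) + \sum_a \Tr_2(A)(J_a X)\, g_{J_a}(Y,Z) = \Tr_2(A)\cc h(Y,Z) X\rr,
\]
which is immediate from $h(Y,Z) = \omega(Y,Z)\Id + \sum_a g_{J_a}(Y,Z) J_a$, and collapses the explicit index sums into the compact $h$-expression appearing in (\ref{Avol}).

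For the equivalence, the direction (iii)$\Rightarrow \delta(A^{\vol}) = 0$ is immediate: substituting (iii) into (\ref{Avol}) gives $\omega(A^{\vol}(X,Y), Z) = 0$ for all $Z$, hence $A^{\vol} \equiv 0$ by non-degeneracy of $\omega$. For the converse, I would antisymmetrize (\ref{Avol}) in $(X, Y)$, combine with the skew-symmetry $h(X,Y) = -h(Y,X)$, and then invoke a prolongation-style argument analogous to Lemma \ref{nocodaz}: since $A^{\vol}$ takes values in the submodule $[\E\Hh]^*\otimes [S^2_0\E]^*$ and the relevant first prolongation is trivial, the vanishing of the antisymmetric part forces $A^{\vol} = 0$, which (using (\ref{Avol}) again) rearranges into (iii). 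The equivalence (iii)$\iff$(ii) follows from expanding $\Tr_2(A)(h(\cdot,\cdot)\cdot)$ via the definition of $h$, and (ii)$\iff$(i) from repackaging the $\sum_a$-sums through the explicit formula $\pi_{1,1}(\omega(X,\cdot)\otimes Z)Y = \frac{1}{4}\big(\omega(X,Y)Z - \sum_a g_{J_a}(X,Y)J_a Z\big)$ together with the antisymmetrization defining $\pi_A$.

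The main obstacle is the prolongation step $\delta(A^{\vol}) = 0 \Rightarrow A^{\vol} = 0$, which requires verifying the triviality of the relevant first prolongation inside $\gl(n,\Hn)\oplus\sp(1)$; the remaining work is algebraic bookkeeping, principally reconciling the symplectic transpose $\Tr_2(A)^T$ appearing in the ${\sf s}_2$-formula with the plain $\Tr_2(A)$ in (\ref{Avol}), and tracking the $\tfrac{1}{4(n+1)}$ versus $\tfrac{1}{8(n+1)}$ factors that propagate from the various $\tfrac{1}{2}$'s and $\tfrac{1}{4}$'s in the definitions of $A$, $\pi_A$, and $\pi_{1,1}$.
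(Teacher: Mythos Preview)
Your approach is essentially the paper's, just run in a different order. The paper first argues $\delta(A^{\vol})=0\iff A^{\vol}=0$ by citing \cite[\zeroprog]{CGWPartI} (which supplies exactly the complement-of-$\ker\delta$ fact you need), then reads off (i) from ${\sf s}_2(\nabla^Q\omega)=0$, expands the $\pi_A$-term explicitly to get (ii), repackages (ii) as (iii) via the definition of $h$, and finally deduces (\ref{Avol}) as a consequence. You instead derive (\ref{Avol}) first and then the equivalence; both orderings work and the algebraic content is the same.

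One correction: $h$ is \emph{not} skew-symmetric in its first two arguments. Since $h(X,Y)Z=\omega(X,Y)Z+\sum_a g_{J_a}(X,Y)J_aZ$, the $\omega$-part is skew but the $g_{J_a}$-parts are symmetric, so $h(X,Y)\neq -h(Y,X)$ in general. Fortunately you do not actually need this claim: the prolongation argument you describe is independent of it and is precisely the paper's point---$A^{\vol}$ lives in $[\E\Hh]^*\otimes[S^2_0\E]^*$, which intersects $\ker\delta$ trivially, so $\delta(A^{\vol})=0$ forces $A^{\vol}=0$ directly, without any antisymmetrization detour.
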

\begin{proof}
Since $A^{\vol}$ belongs to a complement of $\ke(\delta)$, by \cite[\zeroprog]{CGWPartI} we deduce that $\delta(A^{\vol})=0$, if and only if ${\sf s}_2(\nabla^Q \omega)=0$. Thus, the first relation $({\sf i})$ follows by the definition given above for ${\sf s}_2$ and $A$. By \cite[Section 3.2]{CGWPartI} we also deduce that 
\begin{eqnarray*}
\omega(\pi_A(\omega(X,\cdot)\otimes \frac{2\Tr_2(A)^T}{(n+1)})Y,Z)&=&\omega\CC\frac{1}{8}\big(\omega(X,Y) \frac{2\Tr_2(A)^T}{(n+1)}
-\omega(X,\frac{2\Tr_2(A)^T}{(n+1)}) Y\\
&&-\sum_a g_{J_a}(X,Y) J_a\frac{2\Tr_2(A)^T}{(n+1)}+\sum_a g_{J_a}(X,\frac{2\Tr_2(A)^T}{(n+1)}) J_aY\big),Z\RR\\
&=&\frac{1}{4(n+1)}\Big(\omega(X,Y) \omega(\Tr_2(A)^T,Z)-\omega(X,\Tr_2(A)^T) \omega(Y,Z)\\
&&-\sum_a g_{J_a}(X,Y) \omega(J_aTr_2(A)^T,Z)+\sum_a g_{J_a}(X,\Tr_2(A)^T) \omega(J_aY,Z)\Big)\\
&=&\frac{1}{4(n+1)}\Big(\omega(X,Y) \Tr_2(A)(Z)+Tr_2(A)(X) \omega(Y,Z)\\
&&+\sum_a g_{J_a}(X,Y)\Tr_2(A)(J_aZ)-\sum_a \Tr_2(A)(J_aX) g_{J_a}(Y,Z)\Big)\,.
\end{eqnarray*}
This formula is combined with the first term $\frac{\Tr_2(A)(X)}{2(n+1)} \omega(Y,Z)$ and gives the second stated formula $({\sf ii})$. Finally, the last equality ({\sf iii}) follows by the definition of the quaternionic skew-Hermitian form $h$, while the relation (\ref{Avol}) is also a simple consequence of the above.
\end{proof}

Due to this description we are now able to proceed by specifying 1st-order integrability conditions for $\SO^*(2n)\Sp(1)$-structures in terms of the connections $\nabla^{Q}$ and $\nabla^{Q, \vol}$.
\begin{prop}
The torsion of $\nabla^{Q, \omega}$ is given by
\[
T^{Q, \omega}(X, Y)=T^{Q}(X, Y)+\delta(A^{\vol})(X, Y)\,, \quad\forall \ X, Y\in\Gamma(TM)\,,
\]
where
\begin{eqnarray*}
\omega(\delta(A^{\vol})(X,Y),Z)&=&\frac12(\nabla^{Q}_X\omega)(Y,Z)-\frac12(\nabla^{Q}_Y\omega)(X,Z)\\
&&+\frac{\Tr_2(A)\cc h(X,Z)Y-h(Y,Z)X+2\omega(X,Y)Z\rr}{8(n+1)}
\end{eqnarray*}
with $A$ defined by {\rm (\ref{wA})}. 
Thus, the adapted connection $\nabla^{Q, \omega}$ is torsion-free if and only if
\begin{equation}\label{omegTHH}
T^{Q}=0\,,\quad \text{and}\quad (\nabla^{Q}_X\omega)(Y,Z)=\frac{\Tr_2(A)\cc h(Y,Z)X-h(X,Y)Z\rr}{4(n+1)}\,.
\end{equation}
Thus, $\nabla^{Q, \omega}=\nabla^{Q, \vol}+A^{\vol}$ is torsion-free if and only if $\nabla^{Q, \vol}\omega=0$ and $T^{Q, \vol}=0$. 
In other words, $T^{Q, \omega}=0$ if and only if $\nabla^{Q, \omega}=\nabla^{Q, \vol}$, and $(Q, \vol)$ is 1-integrable.
\end{prop}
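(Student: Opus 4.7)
The plan is to combine the decomposition $\nabla^{Q,\omega}=\nabla^{Q,\vol}+A^{\vol}=\nabla^{Q}-\mathsf{s}_{1}(\nabla^{Q}\omega)-\mathsf{s}_{2}(\nabla^{Q}\omega)$ from the discussion preceding Lemma \ref{deltaAvol} with the algebraic information collected in Lemma \ref{deltaAvol} and the branching results of Section \ref{mainthems}. First I would record that for any $(1,2)$-tensor $B$ on $M$, the torsion of $\nabla^{Q}+B$ differs from $T^{Q}$ by $\delta(B)$, where $\delta$ is the Spencer alternation. Applying this to $B=-\mathsf{s}_{1}(\nabla^{Q}\omega)-\mathsf{s}_{2}(\nabla^{Q}\omega)$ and using the defining property $\mathsf{s}_{1}(\nabla^{Q}\omega)\in\ker\delta$, together with $\mathsf{s}_{2}(\nabla^{Q}\omega)=-A^{\vol}$, yields the stated identity $T^{Q,\omega}=T^{Q}+\delta(A^{\vol})$.

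Next I would derive the explicit formula for $\omega(\delta(A^{\vol})(X,Y),Z)$ by antisymmetrizing the expression \eqref{Avol} of Lemma \ref{deltaAvol} in the arguments $X,Y$. This requires a short calculation with the quaternionic skew-Hermitian form $h$: since $g_{J_{a}}$ is symmetric while $\omega$ is antisymmetric, one has $h(X,Y)Z-h(Y,X)Z=2\omega(X,Y)Z$, so that the naively appearing combination $h(Y,Z)X-h(X,Y)Z-h(X,Z)Y+h(Y,X)Z$ collapses to $-\bigl(h(X,Z)Y-h(Y,Z)X+2\omega(X,Y)Z\bigr)$. Combining this with the factor $-\tfrac{1}{8(n+1)}$ from \eqref{Avol} produces the claimed formula.

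For the integrability characterization I would argue as in the proof of Proposition \ref{THo}. By Theorem \ref{quaternionicthm} the torsion $T^{Q}$ of any almost quaternionic connection takes values in the submodule $\mc{X}_{12}=[\K S^{3}\Hh]^{*}\oplus[\Lambda^{3}\E S^{3}\Hh]^{*}$, whereas $\delta(A^{\vol})$ arises from the correction term living in the complement of $\ker\delta$ inside $T^{*}M\otimes(\fr{gl}(n,\Hn)\oplus\fr{sp}(1))_{\mc{Q}}$, hence lies in the remaining compatibility/symplectic part $\mc{X}_{345}$ of the intrinsic torsion module. Since these submodules are $\SO^{*}(2n)\Sp(1)$-invariant and intersect trivially, the condition $T^{Q,\omega}=0$ splits as $T^{Q}=0$ together with $\delta(A^{\vol})=0$. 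Lemma \ref{deltaAvol}(iii) then rewrites the second condition as the displayed identity for $(\nabla^{Q}_{X}\omega)(Y,Z)$. Finally, if $\nabla^{Q}=\nabla^{Q,\vol}$, the tensor $A$ defined by \eqref{wA} coincides with $A^{\vol}$ and hence $\Tr_{2}(A)=\Tr_{2}(A^{\vol})=0$, so the right-hand side vanishes and the condition reduces to $\nabla^{Q,\vol}\omega=0$.

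The main obstacle I anticipate is the second step: the bookkeeping with the tensor $h$ and the symmetry of the $g_{J_{a}}$ must be handled carefully so that the cross terms combine into precisely the asserted coefficient $\tfrac{1}{8(n+1)}$ in front of $h(X,Z)Y-h(Y,Z)X+2\omega(X,Y)Z$; the rest of the argument is either direct or invokes previously established module decompositions.
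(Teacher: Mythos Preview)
Your proposal is correct and follows essentially the same route as the paper: the first identity via $\mathsf{s}_{1}(\nabla^{Q}\omega)\in\ker\delta$ (equivalently, all Oproiu connections share the same torsion), the explicit formula by antisymmetrizing \eqref{Avol} and simplifying via the definition of $h$, the splitting of $T^{Q,\omega}=0$ using $T^{Q}\in\mc{X}_{12}$ versus $\delta(A^{\vol})\in\mc{X}_{345}$, and the final reduction via $\Tr_{2}(A^{\vol})=0$. The paper's proof is more terse but the ingredients and logic are identical.
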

\begin{proof}
Since all of the Oproiu connections have the same torsion, i.e., $T^Q=T^{Q, \vol}$ (see e.g. \cite{AM}), it remains for us to compute $\delta(A^{\vol})$.
In particular, the first relation is an immediate consequence of the relation $T^Q=T^{Q, \vol}$ and the definition of $\nabla^{Q, \omega}$. 
For the explicit formula of $\delta(A^{\vol})$ we apply Lemma \ref{deltaAvol}, which means that we obtain the stated formula by skew-symmetrization of (\ref{Avol}) in the first two indices, and using the definition of $h$.
In addition, the stated integrability conditions for the vanishing of $T^{Q, \omega}$ also occur as a direct consequence of Lemma \ref{deltaAvol}. This is because $T^{Q}\in\mc{X}_{12}$ and $\delta(A^{\vol})\in\mc{X}_{345}$, so these components vanish simultaneously. The last claim follows by the vanishing of $\Tr_{2}(A^{\vol})$. 
\end{proof}

As for the differential of the scalar 2-form $\omega$, we get the following. 
\begin{prop}
Let $(M, Q, \omega)$ be an almost qs-H manifold. Then, the differential of the scalar 2-form $\omega$ satisfies 
\begin{equation}\label{domega}
\dd\omega(X, Y, Z)=\pi_{\omega}(T^{Q, \omega})(X, Y, Z)=\fr{S}_{X, Y, Z}\omega(T^{Q, \omega}(X, Y), Z)\,, 
\end{equation}
for any $X, Y, Z\in\Gamma(TM)$, and hence if $T^{Q, \omega}=0$ then $\dd\omega=0$. More generally, $\dd\omega=0$ or equivalently $(M, Q, \omega)$ is of type $\mc{X}_{15}$, if and only if 
\[
\pi_{\omega}(T^{Q})=0\,,\quad\text{and}\quad \fr{S}_{X, Y, Z}(\nabla^{Q}_{X}\omega)(Y, Z)= \fr{S}_{X, Y, Z}\frac{\Tr_2(A)\cc h(Y,Z)X-h(X,Y)Z\rr}{4(n+1)}\,,
\]
for any $X, Y, Z\in\Gamma(TM)$. 
\end{prop}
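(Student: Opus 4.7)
The first identity is immediate from the classical Cartan-type formula relating the exterior differential of a 2-form to any linear connection with torsion. Namely, for an arbitrary linear connection $\nabla$ with torsion $T$, one has
\[
\dd\omega(X,Y,Z)=\fr{S}_{X,Y,Z}\Big[(\nabla_X\omega)(Y,Z)+\omega(T(X,Y),Z)\Big].
\]
I would apply this to $\nabla=\nabla^{Q,\omega}$: since $\nabla^{Q,\omega}$ is a $\SO^*(2n)\Sp(1)$-connection we have $\nabla^{Q,\omega}\omega=0$, so the first summand vanishes and the expression collapses to $\fr{S}_{X,Y,Z}\omega(T^{Q,\omega}(X,Y),Z)=\pi_\omega(T^{Q,\omega})(X,Y,Z)$. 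In particular, if $T^{Q,\omega}=0$ then obviously $\dd\omega=0$. This is the direct analogue of Proposition \ref{dwH}.

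For the ``more general'' statement, the plan is to use Theorem \ref{symplecticthem}, which identifies $\dd\omega=0$ with $(Q,\omega)$ being of type $\mc{X}_{15}$, i.e.\ with the vanishing of the intrinsic torsion components in $\mc{X}_{234}$. Using the previous proposition one may write the torsion decomposition
\[
T^{Q,\omega}=T^{Q}+\delta(A^{\vol}),
\]
where $T^{Q}\in\mc{X}_{12}$ (this is the quaternionic torsion of any Oproiu connection) and $\delta(A^{\vol})\in\mc{X}_{345}$. Applying the symplectic projection $\pi_\omega$ kills $\mc{X}_{15}$; thus $\pi_\omega(T^{Q})$ lands in the $\mc{X}_{2}$-part and $\pi_\omega(\delta(A^{\vol}))$ lands in the $\mc{X}_{34}$-part. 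Since these are distinct irreducible $\SO^*(2n)\Sp(1)$-summands of $\mc{X}_{234}$ (by \cite[\genCa]{CGWPartI}) they cannot cancel one another, and so $\dd\omega=0$ forces the two obstructions to vanish independently.

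The condition $\pi_\omega(T^{Q})=0$ is the first stated equation. For the second, I would substitute the explicit formula
\[
\omega(\delta(A^{\vol})(X,Y),Z)=\tfrac12(\nabla^{Q}_X\omega)(Y,Z)-\tfrac12(\nabla^{Q}_Y\omega)(X,Z)+\tfrac{\Tr_2(A)[h(X,Z)Y-h(Y,Z)X+2\omega(X,Y)Z]}{8(n+1)}
\]
from the previous proposition and then take the cyclic sum $\fr{S}_{X,Y,Z}$. Using the antisymmetry of $\omega$ one sees that the cyclic sum of the first two terms collapses to $\fr{S}_{X,Y,Z}(\nabla^{Q}_X\omega)(Y,Z)$, while the $\Tr_2(A)$-terms reassemble into $\fr{S}_{X,Y,Z}\tfrac{\Tr_2(A)[h(Y,Z)X-h(X,Y)Z]}{4(n+1)}$ up to sign, after invoking the identities $h(X,Y)Z-h(Y,X)Z=2\omega(X,Y)Z$ that follow from the definition of $h$ and the symmetry of the $g_{J_a}$. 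The main bookkeeping obstacle is precisely this last reassembly of the cyclic sum of the three $h$- and $\omega$-contributions into the compact form stated in the proposition; once this is settled, setting the resulting cyclic sum equal to zero yields exactly the second stated equation, and the proof is complete.
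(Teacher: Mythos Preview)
Your argument is correct and follows exactly the route the paper intends (the paper itself gives no explicit proof here, only the remark that ``a similar method as the one presented for $\SO^*(2n)$-structures'' applies, i.e.\ the proof of Proposition~\ref{dwH}). Your use of $T^{Q,\omega}=T^{Q}+\delta(A^{\vol})$ with $T^{Q}\in\mc{X}_{12}$, $\delta(A^{\vol})\in\mc{X}_{345}$, and the observation that $\pi_\omega$ kills $\mc{X}_{15}$ so that the two pieces land in disjoint isotypes of $\Lambda^3$, is precisely the mechanism behind the ``simultaneous vanishing'' argument in the paper.

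One remark on the step you flag as the main bookkeeping obstacle: it is in fact easier than you suggest. The cyclic sum of $\Tr_2(A)\big(h(X,Z)Y-h(Y,Z)X+2\omega(X,Y)Z\big)$ vanishes identically (each of the three ``targets'' $X,Y,Z$ collects terms that cancel via $h(U,V)-h(V,U)=2\omega(U,V)$), and so does the right-hand side $\fr{S}_{X,Y,Z}\Tr_2(A)\big(h(Y,Z)X-h(X,Y)Z\big)$, by telescoping. Hence there is no sign to chase. A cleaner alternative, which produces the stated right-hand side directly without invoking the $h$-identity, is to start from (\ref{Avol}) rather than the $\delta(A^{\vol})$ formula: since $\omega(A^{\vol}(X,\cdot),\cdot)$ is skew in its last two arguments one has $\fr{S}_{X,Y,Z}\omega(\delta(A^{\vol})(X,Y),Z)=2\,\fr{S}_{X,Y,Z}\omega(A^{\vol}(X,Y),Z)$, and inserting (\ref{Avol}) immediately gives
\[
\pi_\omega(\delta(A^{\vol}))=\fr{S}_{X,Y,Z}(\nabla^{Q}_X\omega)(Y,Z)-\fr{S}_{X,Y,Z}\frac{\Tr_2(A)\big(h(Y,Z)X-h(X,Y)Z\big)}{4(n+1)},
\]
which is the second stated condition upon setting the left-hand side to zero.
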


\subsection{The use of the fundamental symmetric 4-tensor} 
Let us now discuss the case
\[
	K:=\Gl(4n, \R)\,,\quad L:=\SO^*(2n)\Sp(1)\,,\quad {{\sf F}}:=\Phi, \quad \nabla^K=\nabla\,, 
\]
where $\nabla$ is any affine connection on $(M, Q,\omega)$. We have $G=K\cap L=L=\SO^*(2n)\Sp(1)$ and $\Phi\in\Gamma(S^{4}T^*M)$ is the fundamental 4-tensor of the corresponding $G$-structure, which we will denote again by $\pi : \mc{Q}\to M$. We will work in an algebraic setting by using the 4-tensor $\Phi_0$, which has stabilizer $L=G$, see \cite{CGWPartI}.
The Lie algebra $\fr{k}=\fr{gl}(4n, \R)$ of $K$ will be identified with the $G$-module of endomorphisms of $[\E\Hh]$, i.e.,
\[
\fr{k}=\fr{gl}(4n, \R)\cong \Ed([\E\Hh])\,.
\]
Recall that $\Ed([\E\Hh])$ admits the following $\SO^*(2n)\Sp(1)$-equivariant decomposition (see \cite[Section 1.3]{CGWPartI}) 
	\[
	\Ed([\E\Hh]) \cong \R\cdot\Id \oplus \ \fr{sp}(1) \oplus \so^\ast(2n) \oplus \frac{\fr{sl}(n,\Hn)}{\so^\ast(2n)} \oplus\frac{\sp(\omega_0)}{\so^\ast(2n)\oplus \sp(1)} \oplus [\Lambda^2\E S^2\Hh]^*\,.
\]
Here $\R\cdot\id\cong \langle\omega_0\rangle$ and
\[
\fr{sp}(1)\cong [S^2\Hh]^*\,,\quad \fr{so}^*(2n)\cong [\Lambda^{2}\E]^*\,,\quad \displaystyle\frac{\fr{sl}(n,\Hn)}{\so^\ast(2n)}\cong [S^2_0\E]^*\,,\quad 
\displaystyle\frac{\sp(\omega_0)}{\so^\ast(2n)\oplus \sp(1)}\cong [S^2_0\E S^2\Hh]^*\,.
\]
By the above decomposition, we deduce that the splitting 
\[
\fr{k}=\fr{gl}(4n, \R)=\cc\fr{so}^*(2n)\oplus\fr{sp}(1)\rr\oplus\fr{m}=\fr{g}\oplus\fr{m}\,,
\]
defines a reductive decomposition of $\fr{k}$, where the reductive complement $\fr{m}$ is given by
\[
\fr{m}:=\CC\R\cdot\Id \ \oplus \frac{\fr{sl}(n,\Hn)}{\so^\ast(2n)} \oplus\frac{\sp(\omega_0)}{\so^\ast(2n)\oplus \sp(1)} \oplus [\Lambda^2\E S^2\Hh]^*\RR\,.
\]
Thus, according to Proposition \ref{prophol} the covariant derivative $\nabla\Phi$ takes values in the associated bundle $\mathscr{E}$ with fiber isomorphic to
\[
[\E\Hh]^*\otimes \fr{m}=:\mc{W}_1\oplus\mc{W}_2\oplus\mc{W}_3\oplus\mc{W}_4\,,
\]
where we set
\begin{eqnarray*}
\mc{W}_1&:=&[\E\Hh]^*\otimes \Id\,,\\
\mc{W}_2&:=&[\E\Hh]^*\otimes\frac{\fr{sl}(n,\Hn)}{\so^\ast(2n)}\cong [\E\Hh]^*\otimes [S^2_0\E]^*\,,\\
\mc{W}_3&:=&[\E\Hh]^*\otimes\frac{\sp(\omega_0)}{\so^\ast(2n)\oplus \sp(1)}\cong [\E\Hh]^*\otimes [S^2_0\E S^2\Hh]^*\,,\\
\mc{W}_4&:=&[\E\Hh]^*\otimes[\Lambda^2\E S^2\Hh]^*\,,
\end{eqnarray*}
respectively. Note that $\mc{W}_2, \mc{W}_3$ and $\mc{W}_4$ are reducible as $G$-modules. One of our goals below is to provide their expressions into irreducible submodules, and to relate them to the algebraic types $\mc{X}_{i_{1}\ldots i_{j}}$ for $1\leq i_{1}<\ldots< {i_{j}}\leq5$. Moreover, we will explicitly specify the isomorphism
\[
{\sf s}: \mathscr{E}\to (\mc{W}_1\oplus \mc{W}_2\oplus \mc{W}_3 \oplus \mc{W}_4)_{\mc{Q}}\,.
\]
We begin with the following lemma.
\begin{prop}\label{purew}
	The projections ${\sf w}_a : [\E\Hh]^*\otimes \Ed([\E\Hh])\to \mc{W}_a$ $(a=1,2,3,4)$ given below, are equivariant and independent of the choice of an admissible basis $\{J_a\,,\ a=1,2,3\}$ for the standard quaternionic structure $Q_0$ on $[\E\Hh]$:
\begin{enumerate}
\item[${\sf 1)}$] ${\sf w}_1: [\E\Hh]^*\otimes \Ed([\E\Hh])\to \mc{W}_1$ defined by
\[{\sf w}_1(A)(X,Y):=\frac{\Tr(A_X)}{4n}Y.\]
In particular, the elements of $ \mc{W}_1$ take form $\upalpha_1=\xi\otimes \id$ for some $\xi\in [\E\Hh]^*$.
\item[${\sf 2)}$] ${\sf w}_2: [\E\Hh]^*\otimes \Ed([\E\Hh])\to \mc{W}_2$ defined by
\[{\sf w}_2(A)(X,Y):=\pi_{A}(A_X)Y-\frac{\Tr(A_X)}{4n}Y\] 
In particular, the pure elements $\upalpha_2\in \mc{W}_2$ are given by $\omega_{0}(\upalpha_2\cdot, \cdot)=\xi\otimes \hat{\omega}$ for some $\xi\in [\E\Hh]^*$ and a scalar 2-form $\hat{\omega}\in [S^2_0\E]^*$, i.e., $\Tr_2(\upalpha_2)=0$.
\item[${\sf 3)}$] ${\sf w}_3: [\E\Hh]^*\otimes \Ed([\E\Hh])\to \mc{W}_3$ defined by
\[{\sf w}_3(A)(X,Y):=Sym(A_X-\pi_{1,1}(A_X))Y+\sum_a \frac{\Tr(A_X\circ J_a)}{4n}J_aY\] 
In particular, the pure elements $\upalpha_3\in \mc{W}_3$ are given by $\omega_{0}(\upalpha_3\cdot, \cdot)=\xi\otimes \hat{g}_{J_b}$ for some $\xi\in [\E\Hh]^*$, $J_b\in \{J_a\,,\ a=1,2,3\}$ and a scalar 2-form $\hat{\omega}\in [S^2_0\E]^*$, i.e., $\hat{g}_{J_b}(\cdot, \cdot)=\hat{\omega}(\cdot, J_b\cdot)\in [S^2_0\E]^*\otimes\fr{sp}(1)$ and $\Tr_2(\upalpha_3\circ J_b)=0.$
\item[${\sf 4)}$] ${\sf w}_4: [\E\Hh]^*\otimes \Ed([\E\Hh])\to \mc{W}_4$ defined by
\[{\sf w}_4(A)(X,Y):=Asym(A_X-\pi_{1,1}(A_X))Y\] 
In particular, the pure elements $\upalpha_4\in \mc{W}_4$ are given by $\omega_0(\upalpha_4 \cdot, \cdot)=\xi\otimes \rho(\cdot,J_b\cdot )$ for some $\xi\in [\E\Hh]^*$, $J_b\in \{J_a\,,\ a=1,2,3\}$ and quaternionic-Hermitian Euclidean metric $\rho\in [\Lambda^2\E]^*$.
\end{enumerate}
\end{prop}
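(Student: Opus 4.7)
The plan is to identify each ${\sf w}_a$ with the natural $\SO^*(2n)\Sp(1)$-equivariant projection of $[\E\Hh]^*\otimes \Ed([\E\Hh])$ onto the summand $\mc{W}_a=[\E\Hh]^*\otimes (\text{$a$-th summand of }\fr{m})$, using the recalled decomposition of $\Ed([\E\Hh])$. Since the $X$-slot merely hosts a passive linear action, it suffices to work at fixed $X$ and to show that the assignment $B:=A_X \mapsto {\sf w}_a(A)(X,\cdot)$ is the projection of $B\in \Ed([\E\Hh])$ onto the $a$-th summand of $\fr{m}$.

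For ${\sf w}_1$ and ${\sf w}_2$ this is direct: the map $B\mapsto \tfrac{\Tr(B)}{4n}\Id$ is the projection onto $\R\cdot\Id$; and since $\pi_{1,1}$ is the $G$-equivariant projection of $\Ed([\E\Hh])$ onto $\gl(n,\Hn)=\R\cdot\Id\oplus\fr{so}^*(2n)\oplus[S^2_0\E]^*$, the composite $\pi_A=Asym\circ\pi_{1,1}$ (with $Asym$ the $\omega$-antisymmetric projection) lands in $\R\cdot\Id\oplus[S^2_0\E]^*$ because $\fr{so}^*(2n)\subset\sp(\omega_0)$ is $\omega$-symmetric; subtracting $\tfrac{\Tr(B)}{4n}\Id$ then isolates the $[S^2_0\E]^*$ part, yielding ${\sf w}_2$. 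For ${\sf w}_4$: the complement of $\gl(n,\Hn)$ in $\Ed([\E\Hh])$ equals $\fr{sp}(1)\oplus [S^2_0\E S^2\Hh]^*\oplus [\Lambda^2\E S^2\Hh]^*$, and the last summand is precisely the $\omega$-antisymmetric piece (since $\fr{sp}(1)\oplus [S^2_0\E S^2\Hh]^*\subset\sp(\omega_0)$); thus $Asym(B-\pi_{1,1}(B))$ gives ${\sf w}_4$.

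The delicate step is ${\sf w}_3$. I would first establish the auxiliary identity
\[
\Tr(B\circ J_a)\;=\;\Tr\bigl(Sym(B-\pi_{1,1}(B))\circ J_a\bigr),
\]
which splits into two claims: (i) $\Tr(\pi_{1,1}(B)\circ J_a)=0$, since elements of $\gl(n,\Hn)$ commute with $J_a$ and the real trace of a left-quaternion-multiplication composed with a right-imaginary-quaternion-multiplication vanishes (a direct $n=1$ computation that extends by block-diagonality); and (ii) $\Tr(Asym(C)\circ J_a)=0$ for any $C$, by trace-orthogonality of $\omega$-antisymmetric endomorphisms against $\sp(\omega_0)\ni J_a$, which follows from $\Tr(XY)=\Tr((XY)^*)=-\Tr(XY)$ when $X^*=X$ and $Y^*=-Y$ (adjoint with respect to $\omega$). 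Using $\Tr(J_aJ_b)=-4n\,\delta_{ab}$, one then reads off that the $\fr{sp}(1)$-component of $Sym(B-\pi_{1,1}(B))$ equals $-\sum_a\tfrac{\Tr(B\circ J_a)}{4n}J_a$, so the correction term $+\sum_a\tfrac{\Tr(B\circ J_a)}{4n}J_a$ in ${\sf w}_3$ cancels it exactly, leaving the image in $[S^2_0\E S^2\Hh]^*$ as required.

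Equivariance under $\SO^*(2n)\Sp(1)$ is then inherited from the constituent invariants $\Tr$, $\pi_{1,1}$, $Sym$, $Asym$ (with respect to $\omega$), and the $\Sp(1)$-invariant contraction $\sum_a\Tr(\cdot\circ J_a)J_a$. Basis-independence of this contraction follows since under a rotation $\tilde J_a=R_{ab}J_b$ with $R=(R_{ab})\in SO(3)$, the orthogonality $\sum_a R_{ab}R_{ac}=\delta_{bc}$ yields $\sum_a\Tr(B\circ\tilde J_a)\tilde J_a=\sum_b\Tr(B\circ J_b)J_b$; the remaining ingredients depend only on $Q$ and $\omega$, not on the choice of $H\subset Q$. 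The description of pure elements in each $\mc{W}_a$ finally follows from the identification $T\mapsto \omega_0(T\cdot,\cdot)$ of endomorphisms with bilinear forms: $\Id$ corresponds to $\omega_0$, yielding $\upalpha_1=\xi\otimes\id$; elements of $[S^2_0\E]^*$ correspond to trace-free $\omega$-symmetric forms, giving $\upalpha_2$ with $\Tr_2(\upalpha_2)=0$; elements of $[S^2_0\E S^2\Hh]^*$ correspond to products $\hat g_{J_b}=\hat\omega(\cdot,J_b\cdot)$ with $\hat\omega\in[S^2_0\E]^*$, forcing $\Tr_2(\upalpha_3\circ J_b)=0$; and elements of $[\Lambda^2\E S^2\Hh]^*$ correspond to $\rho(\cdot,J_b\cdot)$ with $\rho$ a quaternionic-Hermitian Euclidean metric. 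The main obstacle I expect is precisely the $\fr{sp}(1)$-cancellation just discussed, which requires careful bookkeeping of the $\omega$-adjoint structure and the interaction of $\pi_{1,1}$ with the $J_a$; the remaining verifications are routine.
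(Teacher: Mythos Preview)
Your proof is correct and follows essentially the same approach as the paper: identify the image and kernel of $\pi_{1,1}$, use $Sym$ and $Asym$ (with respect to $\omega_0$) to separate the pieces of $\ker\pi_{1,1}$, and remove the two trace components $\R\cdot\Id$ and $\sp(1)$. The paper's argument is terser—it simply cites \cite{AM} for the trace projections and \cite{CGWPartI} for the decompositions and the form of the pure elements, and for ${\sf w}_3$ it just says ``subtract the $\sp(1)$ trace component'' without verifying that $\Tr\bigl(Sym(A_X-\pi_{1,1}(A_X))\circ J_a\bigr)=\Tr(A_X\circ J_a)$; your auxiliary identity (claims (i) and (ii)) makes this step explicit, and your $SO(3)$-rotation argument for basis-independence is likewise spelled out where the paper leaves it implicit.
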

\begin{proof}
The claimed form for the pure elements $\upalpha_i\in \mc{W}_i$ $(i=1, \ldots, 4)$ follows by Proposition 1.7 and Remark 1.8 in \cite{CGWPartI}. 
By \cite[Section 1.4]{AM}, we have
\[
\imm(\pi_{1,1})=\gl(n,\Hn)\,,\quad \ke(\pi_{1,1})=\frac{\sp(\omega_0)}{\so^\ast(2n)} \oplus [\Lambda^2\E S^2\Hh]^*\cong \sp(1)\oplus [S_0^2\E S^2\Hh]^*\oplus[\Lambda^2\E S^2\Hh]^*\,,
\]
and also the explicit form of the projections to the trace components $\R\cdot \id$ and $\sp(1)$, that is 
\[
\fr{gl}(4n, \R)\ni A\longmapsto \frac{\Tr(A)}{4n}\Id\in\R\Id\,,\quad \fr{gl}(4n, \R)\ni A\longmapsto -\sum_a \frac{\Tr(A\circ J_a)}{4n}J_a\in\fr{sp}(1)\,.
\]
Thus, what remains is the application of the anti/symmetrization operators $Asym$ and $Sym$, which provide the correct projections according to the $\SO^*(2n)\Sp(1)$-decompositions given in \cite[\modules]{CGWPartI}. For instance, consider the case of ${\sf w}_3: [\E\Hh]^*\otimes \Ed([\E\Hh])\to \mc{W}_3=[\E\Hh]^*\otimes [S^2_0\E S^2\Hh]^*$. Let us recall the following $\SO^{\ast}(2n)\Sp(1)$-equivariant decomposition from Part I:
\[
S^2[\E\Hh]^* \cong [\Lambda^2\E]^*\oplus [S^2_0\E S^2\Hh]^*\oplus \sp(1)\,. 
\]
This means that, for an element $A_{X}-\pi_{1, 1}(A_X)$ in the kernel of $\pi_{1, 1}$, the symmetrization must belong to $[S^2_0\E S^2\Hh]^*\oplus \sp(1)$. Thus, by subtracting the trace component $\sp(1)$ we obtain the stated formula for $\w_3$.
The rest of the cases are treated similarly.
\end{proof}
Let us now describe how the pure elements $\upalpha_i\in \mc{W}_i$ from Lemma \ref{purew} act on the fundamental 4-tensor $\Phi_0$.
\begin{lem}\label{pureact}
\noindent $\bullet$ \ For $\upalpha_1\in \mc{W}_1$ given by $\upalpha_1=\xi\otimes \id$, we have 
\[
\upalpha_1\cdot\Phi_0=-4\xi \otimes \Phi_0\,.
\]
\noindent $\bullet$ \ For $\upalpha_2\in \mc{W}_2$ given by $\omega_{0}(\upalpha_2\cdot, \cdot)=\xi\otimes \hat{\omega}$, we have
\[
\upalpha_2\cdot\Phi_0=-4 \xi\otimes \sum_{a=1}^3 \hat{g}_{J_{a}}\odot g_{J_{a}}\,.
\]
\noindent $\bullet$ \ For $\upalpha_3\in \mc{W}_3$ given by $\omega_{0}(\upalpha_3\cdot, \cdot)=\xi\otimes \hat{g}_{J_b}$, we have
\[
\upalpha_3\cdot\Phi_0=-4\sum_{a\neq b}\xi\otimes(\hat{g}_{J_bJ_a}\odot g_{J_a})\,.
\]
\noindent $\bullet$ \ For $\upalpha_4\in \mc{W}_4$ given by $\omega_0(\upalpha_4 \cdot, \cdot)=\xi\otimes \rho(\cdot,J_b\cdot )$, we have
\[
\upalpha_4\cdot \Phi_0=4\xi\otimes (\rho \odot g_{J_b})\,.
\]
\end{lem}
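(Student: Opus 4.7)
The plan is to reduce each of the four computations to a direct calculation, exploiting the factorization $\Phi_0=\sum_{a=1}^{3}g_{J_a}\odot g_{J_a}$ together with the Leibniz rule for the natural action of $\Ed([\E\Hh])$ on tensors. For a pure element of the form $\upalpha=\xi\otimes B$ with $B\in\Ed([\E\Hh])$, the one-form slot $\xi$ factors out, and
\[
B\cdot\Phi_0 \;=\; 2\sum_{a=1}^{3}(B\cdot g_{J_a})\odot g_{J_a}\,,\qquad (B\cdot\phi)(X,Y)=-\phi(BX,Y)-\phi(X,BY)\,.
\]
Thus it suffices to compute $B\cdot g_{J_a}$ for each $a$ and each type $\upalpha_i$, using the algebraic data that tells us whether $B$ is self- or skew-adjoint with respect to $\omega_0$ and whether $B$ commutes or anti-commutes with $Q_0$; this data is forced by the position of $B$ in the $\SO^*(2n)\Sp(1)$-decomposition of $\Ed([\E\Hh])$ recalled just above.

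The case $\upalpha_1=\xi\otimes\id$ is immediate: $\id\cdot g_{J_a}=-2 g_{J_a}$, so $\id\cdot\Phi_0=-4\Phi_0$. For $\upalpha_2$ with $\omega_0(B_2\cdot,\cdot)=\hat{\omega}\in[S^2_0\E]^*$, antisymmetry of $\hat{\omega}$ as a two-form on $[\E\Hh]$ makes $B_2$ self-adjoint with respect to $\omega_0$, while $B_2\in\fr{sl}(n,\Hn)$ commutes with every $J_a$. A short computation then gives $(B_2\cdot g_{J_a})(X,Y)=-2\hat{g}_{J_a}(X,Y)$, and the claimed formula follows after summing.

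For $\upalpha_3$ with $\omega_0(B_3\cdot,\cdot)=\hat{g}_{J_b}$, the symmetry of $\hat{g}_{J_b}$ forces $B_3$ to be skew-adjoint with respect to $\omega_0$, while the factor $J_b$ means $B_3$ fails to commute with $Q_0$. The same recipe produces $(B_3\cdot g_{J_a})(X,Y)=-\hat{g}_{J_b J_a}(X,Y)-\hat{g}_{J_b J_a}(Y,X)$. When $a=b$, $J_bJ_a=-1$ makes $\hat{g}_{J_bJ_a}=-\hat{\omega}$ antisymmetric, so the two terms cancel; when $a\ne b$, $J_bJ_a=\pm J_c$ for the third admissible structure and $\hat{g}_{J_c}$ is symmetric, leaving $-2\hat{g}_{J_bJ_a}$. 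Summing over $a$ gives bullet three. The case $\upalpha_4$, with $\omega_0(B_4\cdot,\cdot)=\tilde{\rho}_b:=\rho(\cdot,J_b\cdot)$ and $\rho\in[\Lambda^2\E]^*$, is the mirror image: $B_4$ is self-adjoint but anti-commutes with $Q_0$, and the parity of $\rho(\cdot,J_bJ_a\cdot)$ is opposite to the previous case, so the roles of $a=b$ and $a\ne b$ are interchanged. The only surviving contribution is $(B_4\cdot g_{J_b})(X,Y)=2\rho(X,Y)$, producing $C\cdot\Phi_0=4\rho\odot g_{J_b}$.

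No conceptual obstacle is expected; the only task is the bookkeeping of parity data (symmetry versus antisymmetry of each bilinear form, and (anti)-commutativity with the $J_a$). All of this is forced by the $\SO^*(2n)\Sp(1)$-equivariant decomposition of $\Ed([\E\Hh])$, so the argument amounts to a systematic case analysis of the four types.
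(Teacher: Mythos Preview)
Your proposal is correct and follows essentially the same route as the paper: both arguments compute $(\upalpha_i\cdot g_{J_a})$ directly from $g_{J_a}(X,Y)=\omega_0(X,J_aY)$ and then assemble $\Phi_0=\sum_a g_{J_a}\odot g_{J_a}$ via the Leibniz rule. The paper defers the $\upalpha_4$ case to \cite{CGWPartI}, whereas you compute it by the same mechanism as $\upalpha_3$; otherwise the computations are identical. One cosmetic point: your shorthand that $B_3$ and $B_4$ ``anti-commute with $Q_0$'' is not literally accurate for elements of $[S^2_0\E S^2\Hh]^*$ or $[\Lambda^2\E S^2\Hh]^*$, but your actual argument does not use this; it correctly relies on the parity of $\hat{\omega}(\cdot,J_bJ_a\cdot)$ and $\rho(\cdot,J_bJ_a\cdot)$, which is exactly what the paper uses as well.
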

\begin{proof}
The last claim is known from \cite[Section 1.3]{CGWPartI}, hence we only need to prove the rest of the cases. For the first one, and for any $x, y, z, w, u\in [\E\Hh]$ we obtain that
\begin{eqnarray*}
\upalpha_1\cdot\Phi_0(x, y, z, w)(u)&=&-\Phi_0(\upalpha_1 x, y, z, w)(u)-\Phi_0(x, \upalpha_1 y, z, w)(u)-\Phi_0(x, y, \upalpha_1 z, w)(u)\\
&&-\Phi_0(x, y, z, \upalpha_1 w)(u)\\
&=&- \Phi_0(\frac{1}{4n}\Tr_{2}(\upalpha_1)(u)x, y, z, w)-\Phi_0(x, \frac{1}{4n}\Tr_{2}(\upalpha_1)(u)y, z, w)\\
&&-\Phi_0(x, y, \frac{1}{4n}\Tr_{2}(\upalpha_1)(u)z, w)-\Phi_0(x, y, z, \frac{1}{4n}\Tr_{2}(\upalpha_1)(u)w)\\
&=&-\frac{1}{n}\Tr_{2}(\upalpha_1)(u)\Phi_0(x, y, z, w)\,.
\end{eqnarray*}
Similarly, for any $x, y, z, w, u\in [\E\Hh]$ we get
\begin{eqnarray*}
(\upalpha_2\cdot g_{J_a})(x, y)(u)&=&-g_{J_a}(\upalpha_2 x, y)(u)-g_{J_a}(x, \upalpha_2 y)(u)=-\omega_{0}(\upalpha_2 x, J_ay)(u)-\omega_{0}(\upalpha_2 y, J_{a}x)(u)\\
&=&-\xi(u)\hat{\omega}(x, J_{a}y)- \xi(u)\hat{\omega}(y, J_{a}x) \\
&=&-2\xi(u)\hat{\omega}(x, J_{a}y)=-2\xi(u)\hat{g}_{J_{a}}(x, y)\,,
\end{eqnarray*}
for any $a=1, 2, 3$. Then, the definition of $\Phi_0$ provides the claimed formula.
Let us now check the third case. For $a\neq b$ and for any $x, y, z, w, u\in [\E\Hh]$ we get
\begin{eqnarray*}
(\upalpha_3\cdot g_{J_a})(x, y)(u)&=&-g_{J_a}(\upalpha_3 x, y)(u)-g_{J_a}(x, \upalpha_3 y)(u)=-\omega_{0}(\upalpha_3 x, J_ay)(u)-\omega_{0}(\upalpha_3 y, J_ax)(u)\\
&=&-\xi(u)\hat{g}_{J_b}(x, J_ay)- \xi(u)\hat{g}_{J_b}(y, J_a x)=-\xi(u)\hat{\omega}(x, J_bJ_a y)- \xi(u)\hat{\omega}(y, J_bJ_a x) \\
&=&-2\xi(u)\hat{g}_{J_bJ_a}(x, y)\,.
\end{eqnarray*}
Finally, for $a= b$ and any $x, y, z, w, u\in [\E\Hh]$ we see that
\begin{eqnarray*}
(\upalpha_3\cdot g_{J_b})(x, y)(u)&=&-g_{J_b}(\upalpha_3 x, y)(u)-g_{J_b}(x, \upalpha_3 y)(u)=-\omega_{0}(\upalpha_3 x, J_by)(u)-\omega_{0}(\upalpha_3 y, J_bx)(u)\\
&=&-\xi(u)\hat{g}_{J_b}(x, J_by)- \xi(u)\hat{g}_{J_b}(y, J_bx)=-\xi(u)\hat{\omega}(x, J_b^2y)- \xi(u)\hat{\omega}(y, J^2_bx) \\
&=&0\,.
\end{eqnarray*}
Thus, again our assertion follows by the definition of $\Phi_0$.
\end{proof}

In the next step we will recover the elements $\upalpha_i\in \mc{W}_i$ from their action on $\Phi_0$. With this goal in mind it is useful to review how to produce a $(1,2)$-tensor by a $(0,5)$-tensor.
For such a procedure we need to consider the inverse of the scalar 2-form $\omega$, and of the induced (local) metrics $g_{J_a}, a=1, 2, 3$. Since there are two possible conventions how to define the inverse of an almost symplectic form, for the convenience of the reader we make this precise. 
\begin{defi}
Let $e_1,\dots, e_{2n},f_1,\dots, f_{2n}$ be a skew-Hermitian basis of $[\E\Hh]$. The inverse $\omega_0^{-1}$ of the standard scalar 2-form $\omega_0$ on $[\E\Hh]$ is expressed by
\[
\omega_0^{-1}:=\sum_{c=1}^{2n} e_c\otimes f_c-f_c\otimes e_c\in [S^2\E]\,.
\]
Moreover, for a $(0,2)$ tensor $A$, we define the following contractions 
\begin{eqnarray*}
A(\omega_0^{-1},\cdot)&:=&\sum_{c=1}^{2n}A(e_c,\cdot)f_c-A(f_c,\cdot)e_c\in \Ed([\E\Hh])\,,\\
A(\cdot,\omega_0^{-1})&:=&\sum_{c=1}^{2n}A(\cdot,f_c)e_c-A(\cdot,e_c)f_c\in \Ed([\E\Hh])\,,\\
A(\omega_0^{-1})&:=&\sum_{c=1}^{2n}A(e_c,f_c)-A(f_c,e_c)\in \R\,.
\end{eqnarray*}
\end{defi}
\noindent It is not hard to show that the inverse $\omega_0^{-1}$ and the above contractions are all independent of the choice of a skew-Hermitian basis.
Observe also that 
\begin{eqnarray*}
\omega_0(\cdot,\omega_0^{-1})&=&\omega_0(\omega_0^{-1},\cdot)=\Id\,,\\
g_{J_a}(\cdot,\omega_0^{-1})&=&-g_{J_a}(\omega_0^{-1},\cdot)=-J_a\,,\\
\omega_0(\cdot,g_{J_a}^{-1})&=&-\omega_0(g_{J_a}^{-1},\cdot)=J_a\,,\\
g_{J_a}(\cdot,g_{J_b}^{-1})&=&g_{J_a}(g_{J_b}^{-1},\cdot)=\begin{cases}
J_aJ_b\,,& a\neq b\,,\\
\Id\,,& a=b\,,
\end{cases}
\end{eqnarray*}
and moreover,
\begin{eqnarray*}
A(\omega_0^{-1})&=&\begin{cases}
0\,, & A\in [S^2\Hh]^*\oplus [\Lambda^{2}\E]^* \oplus [S^2_0\E]^*\oplus [S^2_0\E S^2\Hh]^*\oplus [\Lambda^2\E S^2\Hh]^*\,,\\
4n\,, & A=\omega_0\,.
\end{cases}\\
A(g_{J_a}^{-1})&=&\begin{cases}
0\,,& \langle \omega_0\rangle \oplus [\Lambda^{2}\E]^* \oplus [S^2_0\E]^*\oplus [S^2_0\E S^2\Hh]^*\oplus [\Lambda^2\E S^2\Hh]^*\,,\\
0\,,& A=g_{J_b}, a\neq b\,,\\
4n\,,& A=g_{J_a}\,.
\end{cases}
\end{eqnarray*}
Based on Lemma \ref{pureact} we can prove the following.
\begin{prop}\label{recov}
Suppose that the $(3,1)$-tensor $\hat h_0$ is given by
\[
\hat h_0:=\id \otimes \omega_0^{-1}+\sum_a J_a\otimes g_{J_a}^{-1}
\]
for some admissible basis $H=\{J_a\,,\ a=1,2,3\}$ of the standard quaternionic structure $Q_0$ on $[\E\Hh]$. Then, $\hat h_0$ does not depend on the choice of $H$. Moreover, the linear map
\[
{\sf c} : [\E\Hh]^*\otimes S^4[\E\Hh]^* \to \bigotimes^3 [\E\Hh]^*\,,\quad A\mapsto \sum_a A(x,J_ay,z,g_{J_a}^{-1})\,,\quad \forall \ x, y, z\in [\E\Hh]\,,
\]
defines a natural contraction of $\hat h_0$ with tensors of type $(0, 5)$. In particular,
\begin{eqnarray*}
{\sf c}(\upalpha_1\cdot\Phi_0)(x,y,z)&=&-8(2n-1)\omega_0(\upalpha_1(x,y),z),\\
{\sf c}(\upalpha_2\cdot\Phi_0)(x,y,z)&=&-8(n-1)\omega_0(\upalpha_2(x,y), z),\\
{\sf c}(\upalpha_3\cdot\Phi_0)(x,y,z)&=&\frac{16n}{3}\omega_0(\upalpha_3(x,y), z),\\
{\sf c}(\upalpha_4\cdot\Phi_0)(x,y,z)&=&-\frac{8(n+1)}{3}\omega_0(\upalpha_4(x,y), z).
\end{eqnarray*}
\end{prop}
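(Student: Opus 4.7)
The plan is to treat the two parts of the statement separately: the basis-independence of $\hat h_0$, followed by the four contraction formulas obtained by applying ${\sf c}$ to the elements $\upalpha_i \cdot \Phi_0$ already provided by Lemma \ref{pureact}.

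For the independence claim, observe that under an $\Sp(1)$-rotation $J'_a = \sum_b R_{ab} J_b$ with $R\in\SO(3)$, the relation $g_{J'_a} = \sum_b R_{ab}\, g_{J_b}$ forces the inverse bivectors $g_{J'_a}^{-1}$ to be \emph{nonlinear} combinations of the $g_{J_b}^{-1}$, so invariance is not immediate. I would exploit the identity $g_{J_a}(\cdot,g_{J_b}^{-1})=J_aJ_b$ (for $a\neq b$) to rewrite every $g_{J'_a}^{-1}$ in terms of a fixed $g_{J_c}^{-1}$ together with the $J_d$'s; after that, the orthogonality $R^TR=I$ forces the cross-terms to cancel upon summing over $a$, leaving $\sum_a J'_a\otimes g_{J'_a}^{-1}=\sum_a J_a\otimes g_{J_a}^{-1}$. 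Equivalently, one can identify $\hat h_0$ with a natural contravariant avatar of the $\SO^*(2n)\Sp(1)$-invariant $(1,3)$-tensor $h$ of Part I, which transfers the invariance directly.

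For the contraction formulas I would substitute into ${\sf c}$ the expressions of Lemma \ref{pureact} for $\upalpha_i\cdot\Phi_0$, expand each symmetric product $\hat g_{J_a}\odot g_{J_a}$ (and its analogues) into its three symmetric pairings of the four arguments, and reduce the resulting sums to multiples of $\omega_0(\upalpha_i(x,y),z)$ using four algebraic identities: $(i)$ the trace relation $g_{J_b}(g_{J_a}^{-1})=4n\,\delta_{ab}$; $(ii)$ the inverse identity $\sum_c g_{J_a}(X,P_c)g_{J_a}(Y,Q_c)=g_{J_a}(X,Y)$ whenever $g_{J_a}^{-1}=\sum_c P_c\otimes Q_c$; $(iii)$ the mixed identity $\sum_c g_{J_b}(X,P^{(a)}_c)g_{J_b}(Y,Q^{(a)}_c)=g_{J_b}(J_bJ_aX,Y)$ for $a\neq b$, obtained from the operator identity $g_{J_b}g_{J_a}^{-1}g_{J_b}=(J_bJ_a)g_{J_b}$; and $(iv)$ the quaternion algebra $J_aJ_b=-\delta_{ab}\Id+\epsilon_{abc}J_c$ together with the $H$-Hermiticity $g_{J_a}(J_aX,Y)=\omega_0(X,Y)$. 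The trace constraints $\Tr_2(\upalpha_2)=0$ and $\Tr_2(\upalpha_3\circ J_b)=0$ from Proposition \ref{purew} additionally eliminate spurious $\omega_0$-contributions in the $\upalpha_2$ and $\upalpha_3$ cases.

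As a sanity check for $\upalpha_1$, the three pairings of $\sum_{a,b}(g_{J_b}\odot g_{J_b})(J_ay,z,g_{J_a}^{-1})$ contribute $12n\,\omega_0(y,z)$ (the first pairing, surviving only on the diagonal $a=b$ via $(i)$) and $-6\,\omega_0(y,z)$ (the two symmetric pairings, combining $(ii)$ on-diagonal with $(iii)$ off-diagonal after the simplification $J_bJ_a^2=-J_b$); together with the $-4\xi(x)$ prefactor and the normalization $(T\odot T)(u,u,u,u)=T(u,u)^2$ of the symmetric product, this reproduces the claimed $-8(2n-1)\xi(x)\omega_0(y,z)$. The hard part will be the combinatorial bookkeeping in the $\upalpha_3$ and $\upalpha_4$ cases, where triple products $J_aJ_bJ_c$ of distinct imaginary units generate cross-terms whose cancellation hinges on the trace constraints of Proposition \ref{purew} and on the symmetry versus antisymmetry of the defining tensor ($\hat g_{J_b}$ symmetric in $\upalpha_3$, $\rho$ antisymmetric in $\upalpha_4$), the latter being responsible for the sign flip that produces the coefficient $-8(n+1)/3$ rather than a positive one.
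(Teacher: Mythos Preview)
Your overall plan is sound and matches the paper's approach closely: both the paper and you compute ${\sf c}(\upalpha_i\cdot\Phi_0)$ by expanding the symmetric products $B\odot g_{J_b}$ into their six pairings (with the $\tfrac{1}{6}$ normalization), splitting into the cases $a=b$ and $a\neq b$, and using exactly the identities you list under (i)--(iv). Your sanity check for $\upalpha_1$ reproduces the paper's computation.

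One point deserves correction. Your worry that the inverses $g_{J'_a}^{-1}$ depend \emph{nonlinearly} on the $g_{J_b}^{-1}$ is misplaced. From the identity $\omega_0(\cdot,g_{J_a}^{-1})=J_a$ listed just before the proposition, one reads off $g_{J_a}^{-1}=J_a\circ\omega_0^{-1}$ (as maps $V^*\to V$, up to convention), so $g_{J'_a}^{-1}=\sum_b R_{ab}\,g_{J_b}^{-1}$ transforms \emph{linearly}, and the invariance of $\sum_a J_a\otimes g_{J_a}^{-1}$ follows immediately from $R^TR=I$. The paper instead verifies invariance by computing the action of $\hat h_0$ on the test tensor $\xi\otimes\omega_0\otimes\omega_0$ and identifying the result with the known invariant $h_0$; your alternative suggestion of recognizing $\hat h_0$ as a contravariant avatar of $h$ is essentially this observation. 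Either route is shorter than the cancellation argument you sketch, which is unnecessary once the linearity of $g_{J_a}^{-1}$ in $J_a$ is noted.
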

\begin{proof}
The general formula for the natural contraction can be easily observed, when we express the application of the natural contraction of $\hat h_0$ with the $(0,5)$-tensor $\xi\otimes \omega_0\otimes \omega_0$ by
\[
\xi\otimes (\omega_0(\id x,y)\otimes \omega_0(z,\omega_0^{-1})+\sum_a\omega_0(J_a x,y)\otimes g_{J_a}(z,\omega_0^{-1}))\,,
\]
for some $\xi\in [\E\Hh]^*$. 
In particular, by using this formula we can verify that
\[
\omega_0(\id x,y)\otimes \omega_0(z,\omega_0^{-1})+\sum_a\omega_0(J_a x,y)\otimes g_{J_a}(z,\omega_0^{-1})=\omega_0(x,y)\otimes\id+\sum_a g_{J_a}( x,y)\otimes J_0=h_0\,,
\] 
which proves our first assertion. \\
Now, for $A\in [\E\Hh]^*\otimes S^4[\E\Hh]^*$ we can conclude that $A(x,y,y,\omega_0^{-1})=0$ for all $x,y,z\in [\E\Hh]^*$ and the claimed formula for ${\sf c}$ easily follows. 
In order to prove the rest of the presented formulas, we will compute $A(x,J_ay,z,g_{J_a}^{-1})$ for some $A=\xi\otimes (B\odot g_{J_b})\in [\E\Hh]^*\otimes S^4[\E\Hh]^*$, where $B\in S^2[\E\Hh]^*$. First, we see that
\begin{eqnarray*}
(\xi\otimes (B\odot g_{J_b}))(x,y,z,u,v)&=&\frac{\xi(x)}6(B(y,z)g_{J_b}(u,v)+B(y,u)g_{J_b}(z,v)+B(y,v)g_{J_b}(z,u)\\
&&+B(z,u)g_{J_b}(y,v)+B(z,v)g_{J_b}(y,u)+B(u,v)g_{J_b}(y,z))\,,
\end{eqnarray*}
for any $x,y,z,u,v\in [\E\Hh].$ Thus, by setting ${\sf{Z}}:=(\xi\otimes (B\odot g_{J_b}))(x,J_ay,z,g_{J_a}^{-1})$ we see that
\begin{eqnarray*}
{\sf Z}&=&\frac{\xi(x)}6(B(J_ay,z)g_{J_b}(g_{J_a}^{-1})+2B(J_ay,g_{J_b}(z,g_{J_a}^{-1}))\\
&&+2B(z,g_{J_b}(J_ay,g_{J_a}^{-1}))+B(g_{J_a}^{-1})g_{J_b}(J_ay,z))\\
&=&\begin{cases}
\frac{\xi(x)}6(-B(g_{J_a}^{-1})g_{J_aJ_b}(y,z)-2B(J_a y,J_aJ_bz)-2B(z,J_by)),& a\neq b\,,\\
\frac{\xi(x)}6(4nB(J_ay,z)+2B(J_ay,z)+2B(z,J_ay)+B(g_{J_a}^{-1})\omega_0(y,z)),& a= b\,.
\end{cases}
\end{eqnarray*}
Let us initially consider the case $B=g_{J_b}$. Then we obtain the following:
\begin{eqnarray*}
(\xi\otimes (B\odot g_{J_b}))(x,J_ay,z,g_{J_a}^{-1})&=&\begin{cases}
\frac{\xi(x)}3(-2\omega_0(y,z)),& a\neq b\,,\\
\frac{\xi(x)}3(4n+2)\omega_0(y,z),& a= b\,.
\end{cases}
\end{eqnarray*}
Therefore, having in mind the formulas from Lemma \ref{pureact} we result with
\begin{eqnarray*}
{\sf c}(\upalpha_1\cdot\Phi_0)(x,y,z)&=&-4\sum_{a,b=1}^3 \xi(x) \otimes(g_{J_b}\odot g_{J_b}) (J_ay,z,g_{J_a}^{-1})\\
&=&-4(4n+2-4)\xi(x)\omega_0(y,z)=-8(2n-1)\omega_0(\upalpha_1(x,y),z).
\end{eqnarray*}
Assume now that $B=\hat g_{J_b}$. For this case we compute
\begin{eqnarray*}
(\xi\otimes (B\odot \hat g_{J_b}))(x,J_ay,z,g_{J_a}^{-1})&=&\begin{cases}
-\frac{\xi(x)}32\hat \omega(y,z), & a\neq b\,,\\
\frac{\xi(x)}3(2n+2)\hat \omega(y,z),& a= b\,,
\end{cases}
\end{eqnarray*}
and consequently, in combination with Lemma \ref{pureact} we conclude that
\begin{eqnarray*}
{\sf c}(\upalpha_2\cdot\Phi_0)(x,y,z)&=&-4 \xi(x)\otimes \sum_{a,b=1}^3 (\hat{g}_{J_{b}}\odot g_{J_{b}})(J_ay,z,g_{J_a}^{-1})\\
&=&-4\xi(x)(2n+2-4)\hat{\omega}(y,z)=-8(n-1)\omega_0(\upalpha_2(x,y), z).
\end{eqnarray*}
We should also consider the case $B=\hat g_{J_c}, c\neq b$. Then we obtain
\begin{eqnarray*}
(\xi\otimes (B\odot \hat g_{J_b}))(x,J_ay,z,g_{J_a}^{-1})&=&\begin{cases}
0\,,&c\neq a\neq b\,,\\
\frac{\xi(x)}3(-2)\hat g_{J_aJ_b}(y,z)\,,&c= a\neq b\,,\\
\frac{\xi(x)}3(-2n-2)\hat g_{J_aJ_c}(y,z)\,,& a= b\,,
\end{cases}
\end{eqnarray*}
and therefore, for $J_b=J_1$ we compute
\begin{eqnarray*}
{\sf c}(\upalpha_3\cdot\Phi_0)(x,y,z)&=&-4 \xi(x)\otimes \sum_{a,c=1, c\neq 1}^3 (\hat{g}_{J_1J_{c}}\odot g_{J_{c}})(J_ay,z,g_{J_a}^{-1})\\
&=&-4 \xi(x)\otimes \sum_{a}^3 (\hat{g}_{J_3}\odot g_{J_{2}}-\hat{g}_{J_2}\odot g_{J_{3}})(J_ay,z,g_{J_a}^{-1})\\
&=&0-4\frac{\xi(x)}3(-2n-2)\hat g_{J_2J_3}(y,z)-4\frac{\xi(x)}3(-2)\hat g_{J_3J_2}(y,z)\\
&&-0+4\frac{\xi(x)}3(-2)\hat g_{J_2J_3}(y,z)+4\frac{\xi(x)}3(-2n-2)\hat g_{J_3J_2}(y,z)\\
&=&\frac{\xi(x)}3(8n+8-8-8+8n+8)\hat g_{J_1}(y,z)=\frac{16n}{3}\omega_0(\upalpha_3(x,y), z)\,.
\end{eqnarray*}
In fact, the same holds for $J_b=J_2,J_b=J_3$. 
Finally, let us we consider the case $B=\rho$. Then we get
\begin{eqnarray*}
(\xi\otimes (B\odot \hat g_{J_b}))(x,J_ay,z,g_{J_a}^{-1})&=&\begin{cases}
0,& a\neq b\,,\\
\frac{\xi(x)}3(-2n+2)\rho(y,J_az),& a= b\,,
\end{cases}
\end{eqnarray*}
and a direct computation proves our claim, i.e.,
\begin{eqnarray*}
{\sf c}(\upalpha_4\cdot\Phi_0)(x,y,z)&=&4 \xi(x)\otimes \sum_{a}^3 (\rho \odot g_{J_{b}})(J_ay,z,g_{J_a}^{-1})\\
&=&4\frac{\xi(x)}{3}(-2n-2)\rho(y,J_bz)=-\frac{8(n+1)}{3}\omega_0(\upalpha_4(x,y), z)\,.
\end{eqnarray*}
\end{proof}

Let us now explain how the above results become the key ingredients for an explicit construction of the map 
\[{\sf s}: \mathscr{E}\to (\mc{W}_1\oplus \mc{W}_2\oplus \mc{W}_3 \oplus \mc{W}_4)_{\mc{Q}}.\]
\begin{theorem}\label{smap}
The map
\[{\sf s}(\nabla \Phi):=-(\frac{\sf w_1}{8(2n-1)}+\frac{\sf w_2}{8(n-1)}-\frac{3\sf w_3}{16n}+\frac{3 \sf w_4}{8(n+1)})({\sf c}(\nabla \Phi))\]
satisfies ${\sf p}\circ {\sf s}=\Id_{\mathscr{E}}$. In particular, 
\[
\nabla+(\frac{\sf w_1}{8(2n-1)}+\frac{\sf w_2}{8(n-1)}-\frac{3\sf w_3}{16n}+\frac{3 \sf w_4}{8(n+1)})({\sf c}(\nabla \Phi))
\]
is a $\SO^*(2n)\Sp(1)$-connection.
\end{theorem}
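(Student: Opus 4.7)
The plan is to verify the identity ${\sf p}\circ{\sf s} = \Id_{\mathscr{E}}$ by decomposing with respect to the direct sum $[\E\Hh]^*\otimes\fr{m} = \mc{W}_1\oplus\mc{W}_2\oplus\mc{W}_3\oplus\mc{W}_4$ and matching coefficients. First, I would use that ${\sf p}$ restricted to $T^*M\otimes\fr{m}_{\mc{Q}}$ is an isomorphism onto $\mathscr{E}$, identifying a tensor $A$ with the section $A\cdot\Phi$, as established in the proof of Proposition \ref{prophol}. Accordingly, $\nabla\Phi$ admits a unique representative $A \in \Gamma(T^*M\otimes\fr{m}_{\mc{Q}})$ with $\nabla\Phi = A\cdot\Phi$, which I would decompose as $A = A_1+A_2+A_3+A_4$ with $A_i\in\Gamma(\mc{W}_i)$. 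The goal then reduces to showing ${\sf s}(A\cdot\Phi) = A$.

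Next, by linearity of ${\sf c}$ and of the action on $\Phi_0$, together with the $\SO^*(2n)\Sp(1)$-equivariant identification $[\E\Hh]^*\otimes\Ed([\E\Hh])\cong\bigotimes^3[\E\Hh]^*$ provided by $\omega_0$, Proposition \ref{recov} yields
\[
{\sf c}(A\cdot\Phi_0) \;=\; \sum_{i=1}^{4} c_i\,\omega_0(A_i(\cdot,\cdot),\cdot),
\]
with constants $(c_1,c_2,c_3,c_4) = \bigl(-8(2n-1),\ -8(n-1),\ \tfrac{16n}{3},\ -\tfrac{8(n+1)}{3}\bigr)$. Since the projectors ${\sf w}_i$ of Proposition \ref{purew} are the equivariant projections onto the summands $\mc{W}_i$, they satisfy ${\sf w}_i(A_j) = \delta_{ij}A_i$ on $[\E\Hh]^*\otimes\fr{m}$, and therefore ${\sf w}_i({\sf c}(A\cdot\Phi_0)) = c_i A_i$.

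It then remains to check that the four scalars appearing in the formula for ${\sf s}$ are exactly $1/c_i$, which I would verify by direct inspection: distributing the overall minus sign produces $-1/(8(2n-1)) = 1/c_1$, $-1/(8(n-1)) = 1/c_2$, $3/(16n) = 1/c_3$, and $-3/(8(n+1)) = 1/c_4$. Consequently,
\[
{\sf s}(A\cdot\Phi) \;=\; \sum_{i=1}^{4}\tfrac{1}{c_i}\,{\sf w}_i({\sf c}(A\cdot\Phi_0)) \;=\; \sum_{i=1}^{4} A_i \;=\; A,
\]
so ${\sf p}({\sf s}(\nabla\Phi)) = A\cdot\Phi = \nabla\Phi$, proving ${\sf p}\circ{\sf s} = \Id_{\mathscr{E}}$. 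The second assertion, that $\nabla - {\sf s}(\nabla\Phi)$ (which is the connection written with the opposite sign convention in the statement) is an $\SO^*(2n)\Sp(1)$-connection, follows at once from Proposition \ref{prophol}(3), applied with $K=\Gl(4n,\R)$, $L=G=\SO^*(2n)\Sp(1)$, and ${\sf F}=\Phi$.

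The principal difficulty is not a substantive calculation but rather the bookkeeping of two implicit identifications that must be tracked consistently: the $\omega_0$-induced isomorphism between $[\E\Hh]^*\otimes\Ed([\E\Hh])$ and $\bigotimes^3[\E\Hh]^*$, which is what makes the composition ${\sf w}_i\circ{\sf c}$ meaningful, and the sign convention relating $A\cdot\Phi$ to ${\sf p}(A)$ from the proof of Proposition \ref{prophol}. Once these are fixed, the argument reduces entirely to the coefficient verification above, built on the equivariant projection formulas of Propositions \ref{purew} and \ref{recov}.
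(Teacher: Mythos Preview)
Your proposal is correct and follows essentially the same approach as the paper: decompose $\nabla\Phi$ into its $\mc{W}_1\oplus\mc{W}_2\oplus\mc{W}_3\oplus\mc{W}_4$ components, invoke Proposition~\ref{recov} and linearity to compute ${\sf c}$ on each summand, and then observe that the scalars in the definition of ${\sf s}$ are exactly the reciprocals of the constants $c_i$ from Proposition~\ref{recov}. You have simply made the coefficient-matching and the $\omega_0$-identification $[\E\Hh]^*\otimes\Ed([\E\Hh])\cong\bigotimes^3[\E\Hh]^*$ more explicit than the paper does, and your appeal to Proposition~\ref{prophol}(3) for the second claim matches the paper's as well.
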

\begin{proof}
The second claim is an immediate consequence of Proposition \ref{prophol} and the expression given in the first assertion. To prove the first statement, note that the covariant derivative $\nabla \Phi$ can be expressed as 
\[
\nabla\Phi=\sum_{a=1}^4 \upalpha_{a}\cdot \Phi\,,
\]
for some elements $\upalpha_{a}\in \mc{W}_a$ which themselves are sums of {\it pure} elements of $\mc{W}_a$, and $\cdot$ indicates their action. On the other hand, by linearity and by Proposition \ref{recov}, it follows that ${\sf s}(\nabla \Phi)= \sum_{a=1}^4 \upalpha_{a}$ and thus 
\[
{\sf p}\cc{\sf s}(\nabla \Phi)\rr=\sum_{a}\upalpha_a\cdot\Phi=\nabla\Phi\,,
\] 
i.e., ${\sf p}\circ{\sf s}=\Id_{\mathscr{E}}$. This completes the proof.
\end{proof}

Let us now decompose $\mc{W}_i$ into irreducible (for $n>3$) components and relate then with the image of the map $\delta$. Initially, we proceed to the level of representations.

\begin{prop}\label{irrWi}
The modules $\mc{W}_i$ $(i=1, 2, 3, 4)$ admit the following $\SO^*(2n)\Sp(1)$-equivariant decompositions:
\begin{eqnarray*}
\mc{W}_1&\cong& [\E\Hh]^*,\quad \mc{W}_2\cong [\K\Hh]^*\oplus [\E\Hh]^*\oplus [S^3_0\E\Hh]^*\,,\\
\mc{W}_3&\cong& [\K S^3\Hh]^*\oplus [\E S^3\Hh]^*\oplus [S^3_0\E S^3\Hh]^*\oplus [\K\Hh]^*\oplus [\E\Hh]^*\oplus [S^3_0\E\Hh]^*\,,\\
\mc{W}_4&\cong& [\K S^3\Hh]^*\oplus [\E S^3\Hh]^*\oplus [\Lambda^3\E S^3\Hh]^*\oplus [\K\Hh]^*\oplus [\E\Hh]^*\oplus [\Lambda^3\E\Hh]^*\,.
\end{eqnarray*}
Moreover, 
\begin{eqnarray*}
\bigoplus_a \mc{W}_a&\cap& \ke{\delta}\cong [\K S^3\Hh]^*\oplus [\E S^3\Hh]^*\oplus [S^3_0\E S^3\Hh]^*\oplus [\K\Hh]^*\oplus [\E\Hh]^*\oplus [S^3_0\E\Hh]^*\,,\\
\delta(\bigoplus_a \mc{W}_a)&\cap& \delta([\E\Hh]^*\otimes(\so^*(2n)\oplus \sp(1)))\cong [\E S^3\Hh]^*\oplus [\K\Hh]^*\oplus 2[\E\Hh]^*\oplus [\Lambda^3\E\Hh]^*\,,
\end{eqnarray*}
and consequently
\[
\delta(\bigoplus_a \mc{W}_a)/\delta\cc[\E\Hh]^*\otimes(\so^*(2n)\oplus \sp(1)\rr\cong \mc{X}_{12345}\,.
\]
\end{prop}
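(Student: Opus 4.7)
The plan is to carry out all four assertions on the level of $\SO^*(2n)\Sp(1)$-representations and then transport the result to the associated bundles. First, I would decompose each $\mc{W}_a$ as a tensor product by combining the $\so^*(2n)$-branching rules
\[
\E \otimes S^2_0\E \cong S^3_0\E \oplus \K \oplus \E\,, \qquad \E \otimes \Lambda^2\E \cong \Lambda^3\E \oplus \K \oplus \E\,,
\]
with the $\sp(1)$ Clebsch--Gordan identity $\Hh \otimes S^2\Hh \cong S^3\Hh \oplus \Hh$. For $\mc{W}_1\cong[\E\Hh]^*$ the decomposition is tautological; for $\mc{W}_2,\mc{W}_3,\mc{W}_4$ the three stated splittings are obtained by reading off the factors of $[(\E\otimes S^2_0\E)\otimes \Hh]^*$, $[(\E\otimes S^2_0\E)\otimes(\Hh\otimes S^2\Hh)]^*$, and $[(\E\otimes \Lambda^2\E)\otimes(\Hh\otimes S^2\Hh)]^*$ respectively, using that for $n>3$ all of the listed summands are irreducible as $\SO^*(2n)\Sp(1)$-modules.

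For the kernel assertion, I would exploit the fact that the Spencer kernel of $\delta$ on the whole $[\E\Hh]^*\otimes\gl(4n,\R)$ is just the symmetric part $S^2[\E\Hh]^*\otimes[\E\Hh]$. Combining the decomposition $S^2[\E\Hh]^* \cong [\Lambda^2\E]^*\oplus [S^2_0\E S^2\Hh]^*\oplus \sp(1)$ recalled from Part I with $[\E\Hh]$, expanding into irreducibles via the same branching rules, and then subtracting the contribution coming from the intersection with $[\E\Hh]^*\otimes(\so^*(2n)\oplus\sp(1))$, one reads off precisely the list claimed for $\bigoplus_a\mc{W}_a\cap\ke\delta$. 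The explicit projectors $\w_a$ from Proposition \ref{purew}, applied to symmetric-in-$(X,Y)$ tensors, serve to pick out which copies of repeated isotypes genuinely sit in the kernel.

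For the image assertion, I would identify $\delta([\E\Hh]^*\otimes(\so^*(2n)\oplus\sp(1)))$ as the complement inside $\Lambda^2[\E\Hh]^*\otimes[\E\Hh]$ of the intrinsic torsion module $\mc{H}(\so^*(2n)\oplus\sp(1))=\mc{X}_{12345}$ (available from \cite[\zeroprog]{CGWPartI}), and then intersect with $\delta(\bigoplus_a\mc{W}_a)$ by matching isotypic components against the decompositions from the first step. The final quotient assertion is then a formal consequence of the global Spencer surjectivity $\delta([\E\Hh]^*\otimes\gl(4n,\R))=\Lambda^2[\E\Hh]^*\otimes[\E\Hh]$ (since $\gl(4n,\R)$-structures are torsion-free), which yields
\[
\delta\Big(\bigoplus_a\mc{W}_a\Big)+\delta\big([\E\Hh]^*\otimes(\so^*(2n)\oplus\sp(1))\big)=\Lambda^2[\E\Hh]^*\otimes[\E\Hh]\,,
\]
so that the second isomorphism theorem identifies the quotient with $\mc{X}_{12345}$. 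The main obstacle is the multiplicity bookkeeping in the middle step: several irreducibles (notably $[\K\Hh]^*$, $[\E\Hh]^*$ and $[\E S^3\Hh]^*$) occur more than once across the various $\mc{W}_a$, and disentangling which linear combinations of these copies actually lie in $\ke\delta$, versus in $\delta(\bigoplus_a\mc{W}_a)\cap\delta([\E\Hh]^*\otimes(\so^*(2n)\oplus\sp(1)))$, requires working with the explicit projectors $\w_a$ of Proposition \ref{purew} rather than only abstract isotypic decomposition.
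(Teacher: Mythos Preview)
Your proposal is correct and follows essentially the same strategy as the paper: decompose each $\mc{W}_a$ into irreducibles, identify $\ke\delta$ with $S^2[\E\Hh]^*\otimes[\E\Hh]$, and read off the intersections and the quotient from isotypic bookkeeping. The difference is purely organizational. You compute each $\mc{W}_a$ directly via the $\so^*(2n)$- and $\sp(1)$-branching rules, whereas the paper avoids this by citing Part~I: it pulls the decompositions of $\mc{W}_1,\mc{W}_2$ from \cite[\zeroprog]{CGWPartI}, then obtains $\mc{W}_4$ by subtracting these from the torsion module $\Lambda^2[\E\Hh]^*\otimes[\E\Hh]\cong\mc{W}_1\oplus\mc{W}_2\oplus\mc{W}_4$ (decomposed in \cite[\genCa]{CGWPartI}), and finally gets $\mc{W}_3$ from the companion identity $S^2[\E\Hh]^*\otimes[\E\Hh]\cong\mc{W}_3\oplus\big([\E\Hh]^*\otimes(\so^*(2n)\oplus\sp(1))\big)$ together with the injectivity of $\delta$ on the second summand. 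Your direct approach is more self-contained; the paper's subtraction approach is shorter because the heavy lifting was done in Part~I. For the kernel and image assertions the two arguments are essentially identical, and the paper does not in fact invoke the explicit projectors~$\w_a$ for the multiplicity tracking---it simply compares the irreducible content on both sides, so your last paragraph overstates the difficulty slightly.
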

\begin{proof}
By \cite[\zeroprog]{CGWPartI} we know the decomposition of $\mc{W}_1$ and $\mc{W}_2$. Since 
\[
\mc{W}_1\oplus \mc{W}_2\oplus \mc{W}_4\cong \Lambda^2[\E\Hh]^*\otimes [\E\Hh]
\] we obtain the decomposition of $ \mc{W}_4$ by \cite[\genCa]{CGWPartI}. Note that if one replaces the antisymmetric tensors with trace-free symmetric tensors, then the decomposition of $\mc{W}_3$ is analogous to the decomposition of $\mc{W}_4$. Hence, by \cite[\genCa]{CGWPartI} and by the triviality of $\ke\CC\delta\big|_{[\E\Hh]^*\otimes\cc\so^*(2n)\oplus \sp(1)\rr}\RR$ we obtain
\begin{eqnarray*}
S^2[\E\Hh]^*\otimes [\E\Hh]&\cong&\mc{W}_3\oplus \CC [\E\Hh]^*\otimes\cc \so^*(2n)\oplus \sp(1)\rr\RR\\
&\cong& \mc{W}_3\oplus \delta \CC [\E\Hh]^*\otimes\cc \so^*(2n)\oplus \sp(1)\rr\RR \\
&\cong& [\K S^3\Hh]^*\oplus 2[\E S^3\Hh]^*\oplus [S^3_0\E S^3\Hh]^*\oplus 2[\K\Hh]^*\oplus 3[\E\Hh]^*\oplus [S^3_0\E\Hh]^*\oplus [\Lambda^3\E\Hh]^*
\end{eqnarray*}
and thus both of the claimed intersections follow. These also imply the final assertion.
\end{proof}

Let us emphasize on the fact that although $\bigoplus_a \mc{W}_a\cap \ke{\delta}\cong \mc{W}_3$, we have only $\ke{\delta}\cap \mc{W}_3= [S^3_0\E S^3\Hh]^*$ for the intersection. Nevertheless, we know that
\[
\ke{\delta}\cap \CC\mc{W}_3\oplus \cc [\E\Hh]^*\otimes(\so^*(2n)\oplus \sp(1))\rr\RR=S^3[\E\Hh]^*\cong [\E S^3\Hh]^*\oplus [\K\Hh]^*\oplus [S^3_0\E S^3\Hh]^*\oplus [\E\Hh]^*\,,
\]
which is the first prolongation of almost symplectic structures. Similarly,
\[
\ke{\delta}\cap \CC\mc{W}_1\oplus \mc{W}_2\oplus \cc[\E\Hh]^*\otimes(\so^*(2n)\oplus \sp(1))\rr\RR=[\E\Hh]^*\,,
\]
which is the first prolongation of almost quaternionic structures. This was described in our terms in \cite[\kernel]{CGWPartI} (see also \cite{Salamon86, AM}).

Having in mind that torsion-free connections are minimal $\Gl(4n, \R)$-connections, based on the Proposition \ref{irrWi} we result to the following characterization of the algebraic types of the corresponding $\SO^*(2n)$- and $\SO^*(2n)\Sp(1)$-geometries.
\begin{prop}
For a torsion-free connection $\nabla$, the following holds:
\begin{enumerate}
\item[$\mc{X}_1$] is the image of the components $2[\K S^3\Hh]^*\subset \mc{W}_{3}\oplus \mc{W}_{4}$ by $\delta$, and vanishes when the component of ${\sf s}(\nabla \Phi)$ in these two modules belongs to the kernel $\ke(\delta)$.
\item[$\mc{X}_2$] is the image of the component $[\Lambda^3\E S^3\Hh]^*\subset \mc{W}_{4}$ by $\delta$, and vanishes when this component of ${\sf s}(\nabla \Phi)$ vanishes.
\item[$\mc{X}_3$] is the image of the components $2[\K \Hh]^*\subset \mc{W}_{2}\oplus \mc{W}_{4}$ by $\delta$, and vanishes when the component of ${\sf s}(\nabla \Phi)$ in these two modules belongs to the kernel $\ke(\delta)$.
\item[$\mc{X}_4$] is the image of the components $2[\E \Hh]^*\subset \mc{W}_{2}\oplus \mc{W}_{4}$ by $\delta$, and vanishes when the component of ${\sf s}(\nabla \Phi)$ in these two modules belongs to the kernel $\ke(\delta)$.
\item[$\mc{X}_5$] is the image of the components $2[S^3_0\E \Hh]^*\subset \mc{W}_{2}\oplus \mc{W}_{3}$ by $\delta$, and vanishes when the component of ${\sf s}(\nabla \Phi)$ in these two modules belongs to the kernel $\ke(\delta)$.
\item[$\mc{X}_6$] is the image of the components $2[\E S^3\Hh]^*\subset \mc{W}_{3}\oplus \mc{W}_{4}$ by $\delta$, and vanishes when the component of ${\sf s}(\nabla \Phi)$ in these two modules belongs to the kernel $\ke(\delta)$.
\item[$\mc{X}_{47}$] is the image of the components $3[\E \Hh]^*\subset \mc{W}_{1}\oplus\mc{W}_{2}\oplus \mc{W}_{4}$ by $\delta$, and vanishes when the component of ${\sf s}(\nabla \Phi)$ in these three modules belongs to the kernel $\ke(\delta)$.
\end{enumerate}
\end{prop}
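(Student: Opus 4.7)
\medskip
\noindent\textbf{Proof proposal.} The plan is to apply Proposition \ref{prophol} to the situation of Theorem \ref{smap}, namely $K=\Gl(4n,\R)$, $L=\SO^*(2n)\Sp(1)$, ${\sf F}=\Phi$, and then read off the intrinsic torsion from the Spencer alternation $\delta\circ{\sf s}(\nabla\Phi)$. Since $\nabla$ is assumed torsion-free, we have $T^K=0$, and Proposition \ref{prophol}\,{\sf 4)} yields that the torsion of the $\SO^*(2n)\Sp(1)$-connection $\nabla-{\sf s}(\nabla\Phi)$ equals $-\delta({\sf s}(\nabla\Phi))$. The intrinsic torsion of the $\SO^*(2n)\Sp(1)$-structure is then the class of this element in the quotient
\[
\mc{H}(\so^*(2n)\oplus\sp(1))\cong \Tor(M)\big/\delta\bigl([\E\Hh]^*\otimes(\so^*(2n)\oplus\sp(1))\bigr).
\]

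The second step is to invoke Proposition \ref{irrWi}, which gives the isotypic decomposition of each $\mc{W}_a$ together with the description of $\bigoplus_a\mc{W}_a\cap\ke\delta$ and of the quotient $\delta(\bigoplus_a\mc{W}_a)/\delta([\E\Hh]^*\otimes(\so^*(2n)\oplus\sp(1)))\cong \mc{X}_{12345}$. For each pure $\Sp(1)$-invariant type $\mc{X}_i$ ($i=1,\ldots,5$) one reads off from the decompositions in Proposition \ref{irrWi} precisely in which $\mc{W}_a$ the corresponding isotype occurs. For instance, $[\K S^3\Hh]^*$ appears once in $\mc{W}_3$ and once in $\mc{W}_4$, giving the $2[\K S^3\Hh]^*\subset \mc{W}_3\oplus\mc{W}_4$ mentioned in the item for $\mc{X}_1$; similarly for $\mc{X}_3$, $\mc{X}_4$, $\mc{X}_5$, $\mc{X}_6$. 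The isotype $[\Lambda^3\E S^3\Hh]^*$ occurs only in $\mc{W}_4$, which explains the single-module statement for $\mc{X}_2$.

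Next, one invokes Schur's lemma. Each of the modules $\mc{X}_i$ has multiplicity one in the intrinsic torsion quotient, while the corresponding isotype occurs with multiplicity two or three in $\bigoplus_a \mc{W}_a$. Therefore the $\delta$-map, restricted to the isotypic component in $\bigoplus_a \mc{W}_a$, has a one-codimensional kernel inside that isotypic component (two-codimensional in the $\mc{X}_{47}$ case, since $3[\E\Hh]^*$ sits in $\mc{W}_1\oplus\mc{W}_2\oplus\mc{W}_4$ and $2[\E\Hh]^*$ already belong to $\delta([\E\Hh]^*\otimes(\so^*(2n)\oplus\sp(1)))$ by the second isomorphism of Proposition \ref{irrWi}). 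The vanishing condition thus becomes exactly that the component of ${\sf s}(\nabla\Phi)$ in the listed submodules lies in $\ke\delta$, and the identification of $\bigoplus_a\mc{W}_a\cap\ke\delta\cong\mc{W}_3$ (again from Proposition \ref{irrWi}) pins down the kernel as the appropriate diagonal combination.

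The main obstacle will be the $\mc{X}_{47}$ item, where one must descend from $\SO^*(2n)\Sp(1)$-irreducibles to $\SO^*(2n)$-irreducibles, and carefully separate the contribution of the three copies of $[\E\Hh]^*$ (one in each of $\mc{W}_1$, $\mc{W}_2$, $\mc{W}_4$) from the two copies that are already absorbed into $\delta([\E\Hh]^*\otimes(\so^*(2n)\oplus\sp(1)))$. Once this is bookkept — using the first-prolongation identification $\ke\delta\cap S^3[\E\Hh]^*\supset [\E\Hh]^*$ discussed after Proposition \ref{irrWi}, together with the analogous first prolongation of almost quaternionic structures — the statement for $\mc{X}_{47}$ follows in the same manner as the other items. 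The remaining claims are then a matter of transcribing the information encoded in Proposition \ref{irrWi} case by case.
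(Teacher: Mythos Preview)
Your overall strategy---apply Proposition~\ref{prophol} with $T^K=0$, then read the intrinsic torsion as the class of $-\delta({\sf s}(\nabla\Phi))$ in the quotient by $\delta([\E\Hh]^*\otimes(\so^*(2n)\oplus\sp(1)))$, and decode isotype by isotype via Proposition~\ref{irrWi}---is exactly what the paper does. There is, however, one concrete miscount that leaves a gap in your treatment of $\mc{X}_3$, $\mc{X}_4$ and $\mc{X}_{47}$.

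You write that the relevant isotypes ``similarly'' occur twice, but from Proposition~\ref{irrWi} the module $[\K\Hh]^*$ occurs \emph{three} times (once each in $\mc{W}_2,\mc{W}_3,\mc{W}_4$) and $[\E\Hh]^*$ occurs \emph{four} times (once in each $\mc{W}_a$). So for $\mc{X}_3$, $\mc{X}_4$, $\mc{X}_{47}$ you must explain why the copy sitting in $\mc{W}_3$ does \emph{not} contribute to the intrinsic torsion class. The paper handles this explicitly: using the identification $\ke\delta\cap\bigl(\mc{W}_3\oplus[\E\Hh]^*\otimes(\so^*(2n)\oplus\sp(1))\bigr)=S^3[\E\Hh]^*\supset[\K\Hh]^*\oplus[\E\Hh]^*$ (the symplectic first prolongation, discussed right after Proposition~\ref{irrWi}), one sees that $\delta$ of the $[\K\Hh]^*$- and $[\E\Hh]^*$-components inside $\mc{W}_3$ already lies in $\delta\bigl([\E\Hh]^*\otimes(\so^*(2n)\oplus\sp(1))\bigr)$ and hence projects to zero in the intrinsic-torsion quotient. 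You invoke the symplectic prolongation only in passing for $\mc{X}_{47}$, but you need it earlier and more systematically to justify dropping the $\mc{W}_3$ copies in the $\mc{X}_3$ and $\mc{X}_4$ items as well. Once this exclusion is made precise, the remaining multiplicity/kernel bookkeeping via Schur's lemma goes through as you outline.
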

\begin{proof}
Clearly, for modules isomorphic to $\mc{X}_i$ that are not contained in $[\E\Hh]^*\otimes(\so^*(2n)\oplus \sp(1))$, the claim follows. For $\mc{X}_i$ that are contained in $[\E\Hh]^*\otimes(\so^*(2n)\oplus \sp(1))$, the claim follows by the discussion above. In particular, for the case of $\mc{X}_3$ we can exclude $[\K \Hh]^*\subset \mc{W}_3$ since $\delta([\K \Hh]^*)$ lies in $\delta\cc[\E\Hh]^*\otimes(\so^*(2n)\oplus \sp(1))\rr$. For $\mc{X}_4$, based on the same reason we exclude $[\E \Hh]^*\subset\mc{W}_3$.
\end{proof}
\begin{rem}\textnormal{
In general it is a hard task to derive the projections to the particular components. It is similarly challenging to indicate the elements of $[\E\Hh]^*\otimes(\so^*(2n)\oplus \sp(1))$ which remove the part of the torsion belonging to 
\[
\delta(\bigoplus_a \mc{W}_a)\cap\delta([\E\Hh]^*\otimes(\so^*(2n)\oplus \sp(1)))\,.
\]
We will carry out this problem in a later part of this series of papers devoted to $\SO^*(2n)$- and $\SO^*(2n)\Sp(1)$-structures. Such a ``removal'' would produce a minimal $\SO^*(2n)\Sp(1)$-connection for a particular normalization condition. As we will discuss below, the (minimal) connections described in \cite[\starconnectionsp]{CGWPartI} provide a particular example of this remarkable situation.}
\end{rem}
We now proceed with the application of Theorem \ref{smap} and Proposition \ref{irrWi} for an almost symplectic connection $\nabla=\nabla^{\omega}$. In this case, we see that 
\[
{\sf s}(\nabla^{\omega} \Phi):=\frac{3\sf w_3}{16n}({\sf c}(\nabla^{\omega} \Phi))\in \mc{W}_3\,,
\] 
and we can conclude the following.
\begin{corol}
For an almost symplectic connection $\nabla^{\omega}$ with torsion $T^{\omega}$, the following holds:
\begin{enumerate}
\item[$\mc{X}_1$] is the image of the component $[\K S^3\Hh]^*\subset \mc{W}_{3}$ by $\delta$, and vanishes when this component of ${\sf s}(\nabla^{\omega} \Phi)$ vanishes.
\item[$\mc{X}_2$] is the component $[\Lambda^3\E S^3\Hh]^*$ of $T^{\omega}$, and vanishes when this component of $T^{\omega}$ vanishes.
\item[$\mc{X}_3$] is the component $[\K \Hh]^*$ of $T^{\omega}$, and vanishes when this component of $T^{\omega}$ vanishes.
\item[$\mc{X}_4$] is the component $[\E \Hh]^*$ of $T^{\omega}$, and vanishes when this component of $T^{\omega}$ vanishes.
\item[$\mc{X}_5$] is the image of the component $[S^3_0\E \Hh]^*\subset \mc{W}_{3}$ by $\delta$, and vanishes when this component of ${\sf s}(\nabla \Phi)$ vanishes.
\item[$\mc{X}_6$] is the image of the components $[\E S^3\Hh]^*\subset \mc{W}_{3}$ by $\delta$, and vanishes when this component of ${\sf s}(\nabla \Phi)$ vanishes.
\item[$\mc{X}_{7}$] is the image of the components $[\E \Hh]^*\subset \mc{W}_{3}$ by $\delta$, and vanishes when this component of ${\sf s}(\nabla \Phi)$ vanishes.
\end{enumerate}
\end{corol}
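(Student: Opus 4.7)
The plan is to specialise Theorem \ref{smap} to $\nabla=\nabla^{\omega}$ and then to trace each irreducible piece $\mc{X}_{i}$ through the resulting formula, using Proposition \ref{irrWi} as the combinatorial dictionary. The crucial first step is to observe that the correction supplied by Theorem \ref{smap} simplifies drastically in the present setting. Indeed, an almost symplectic connection preserves $\omega$, hence it also preserves any $\Sp(4n,\R)$-invariant tensor. Since $\Phi$ is stabilised only by $\SO^{*}(2n)\Sp(1)\subset\Sp(4n,\R)$, the covariant derivative $\nabla^{\omega}\Phi$ takes values in the associated bundle with typical fibre
\[
[\E\Hh]^{*}\otimes\big(\sp(4n,\R)/(\so^{*}(2n)\oplus\sp(1))\big)=[\E\Hh]^{*}\otimes[S^{2}_{0}\E S^{2}\Hh]^{*}=\mc{W}_{3}.
\]
Consequently, among the four projections $\w_{1},\w_{2},\w_{3},\w_{4}$ appearing in Theorem \ref{smap}, only $\w_{3}$ contributes, and the induced $\SO^{*}(2n)\Sp(1)$-connection reads
\[
\nabla=\nabla^{\omega}-\tfrac{3\w_{3}}{16n}\big({\sf c}(\nabla^{\omega}\Phi)\big).
\]
By Proposition \ref{prophol}(4) its torsion is $T=T^{\omega}-\delta\big(\tfrac{3\w_{3}}{16n}({\sf c}(\nabla^{\omega}\Phi))\big)$, and the intrinsic torsion is obtained by projecting $T$ modulo $\delta([\E\Hh]^{*}\otimes(\so^{*}(2n)\oplus\sp(1)))$.

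I would then identify the $\mc{X}_{i}$ in three cases. For $\mc{X}_{2}=[\Lambda^{3}\E S^{3}\Hh]^{*}$ the decomposition of $\mc{W}_{3}$ recorded in Proposition \ref{irrWi} shows that this isotype is absent from $\mc{W}_{3}$ (it appears only in $\mc{W}_{4}$), so the correction contributes nothing and $\mc{X}_{2}$ equals the $[\Lambda^{3}\E S^{3}\Hh]^{*}$-component of $T^{\omega}$; the analogous statement for $\mc{X}_{3}=[\K\Hh]^{*}$ and $\mc{X}_{4}=[\E\Hh]^{*}$ requires a further remark, namely that although these isotypes do sit inside $\mc{W}_{3}$, by Proposition \ref{irrWi} their $\delta$-images lie in $\delta([\E\Hh]^{*}\otimes(\so^{*}(2n)\oplus\sp(1)))$ and so are annihilated by the projection; hence $\mc{X}_{3},\mc{X}_{4}$ are also picked out from $T^{\omega}$. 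For $\mc{X}_{1},\mc{X}_{5},\mc{X}_{6},\mc{X}_{7}$, the corresponding isotypes $[\K S^{3}\Hh]^{*},[S^{3}_{0}\E\Hh]^{*},[\E S^{3}\Hh]^{*}$ and $[\E\Hh]^{*}$ inside $\mc{W}_{3}$ map under $\delta$ to submodules complementary to the normalisation subbundle; each $\mc{X}_{i}$ is then the $\delta$-image of the corresponding component of ${\sf s}(\nabla^{\omega}\Phi)$, and vanishes precisely when that component does.

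The main technical obstacle I foresee lies in the multiplicity-tracking for the isotype $[\E\Hh]^{*}$: this module occurs once inside each of $\mc{W}_{1},\mc{W}_{2},\mc{W}_{3},\mc{W}_{4}$, as well as inside $\delta([\E\Hh]^{*}\otimes(\so^{*}(2n)\oplus\sp(1)))$, and one must verify that the single copy living inside $\mc{W}_{3}$ projects precisely to $\mc{X}_{7}$ while the $[\E\Hh]^{*}$-component of $T^{\omega}$ projects to $\mc{X}_{4}$. This bookkeeping is settled by the explicit branchings of $S^{2}[\E\Hh]^{*}\otimes [\E\Hh]$ and $\Lambda^{2}[\E\Hh]^{*}\otimes [\E\Hh]$ already recorded in Proposition \ref{irrWi}, together with the first-prolongation computations from \cite[\kernel]{CGWPartI}; once the multiplicities are sorted out, the seven cases of the corollary drop out.
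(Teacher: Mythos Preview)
Your approach is exactly the one the paper takes: it notes that for $\nabla=\nabla^{\omega}$ one has ${\sf s}(\nabla^{\omega}\Phi)=\tfrac{3\w_{3}}{16n}({\sf c}(\nabla^{\omega}\Phi))\in\mc{W}_{3}$ and then reads off the corollary from Theorem~\ref{smap} and Proposition~\ref{irrWi}, just as you do. Your justification that only $\mc{W}_{3}$ survives (because $\nabla^{\omega}$ preserves $\omega$ and hence $\nabla^{\omega}\Phi$ is valued in the bundle with fibre $\sp(4n,\R)/(\so^{*}(2n)\oplus\sp(1))$) is the correct reason, and your case-by-case tracking of the isotypes, including the exclusion of the $[\K\Hh]^{*}$ and $[\E\Hh]^{*}$ copies in $\mc{W}_{3}$ via the first-prolongation argument, matches the discussion surrounding Proposition~\ref{irrWi}.
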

Now, the application of Theorem \ref{smap} and Proposition \ref{irrWi} for $\nabla=\nabla^{H}$, and $\nabla=\nabla^{Q,\vol}$, respectively, should allow the recovering of the results obtained before. 
Indeed, as an immediate consequence of Theorem \ref{smap} we obtain:
\begin{corol} \textsf{1)} \ On an almost hs-H manifold $(M, H, \omega)$ the adapted connection $\nabla^{H, \omega}$ satisfies
$\nabla^{H,\omega}=\nabla^H-{\sf s}(\nabla^{H} \Phi)$, 
which is equivalent to saying that $\omega(-{\sf s}(\nabla^{H} \Phi)\cdot,\cdot)=\frac12\nabla^{H}\omega$. \\ 
\textsf{2)} \ On an almost qs-H manifold $(M, Q, \omega)$ the adapted connection $\nabla^{Q, \omega}$ satisfies
$\nabla^{Q,\omega}=\nabla^{Q,\vol}-{\sf s}(\nabla^{Q,\vol} \Phi)$, i.e., $-{\sf s}(\nabla^{Q, \vol} \Phi)=A^{\vol}$.
\end{corol}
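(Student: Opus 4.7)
The plan is to apply \starconnection\ (i.e., Theorem \ref{smap}) with the specific choices $\nabla=\nabla^H$ for part (1) and $\nabla=\nabla^{Q,\vol}$ for part (2), and to identify the correction term $-{\sf s}(\nabla\Phi)$ with the tensor $A$ (respectively $A^{\vol}$) already constructed via Proposition \ref{prophol}. The key observation is that both constructions are splittings inverting the same projection ${\sf p}:T^*M\otimes\fr{k}_{\mc{P}}\to\mathscr{E}$, so agreement follows once the correction tensor is shown to lie in the chosen reductive complement $\fr{m}$.

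For part (1), the first step is to recall that $\nabla^{H,\omega}$ preserves $(H,\omega)$, hence it preserves $g_{J_a}(\cdot,\cdot)=\omega(\cdot,J_a\cdot)$ for each $a$ and thus the fundamental $4$-tensor $\Phi=\sum_a g_{J_a}\odot g_{J_a}$. Writing $\nabla^{H,\omega}=\nabla^H+A$ as in the discussion preceding Proposition \ref{THo}, this gives
\[
0=\nabla^{H,\omega}\Phi=\nabla^H\Phi+A\cdot\Phi,
\]
so $\nabla^H\Phi={\sf p}(-A)$ after the identification from the proof of Proposition \ref{prophol}. It remains to verify that $A$ lies in the image of the splitting ${\sf s}$ of \starconnection. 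Since $A$ takes values in a complement of $\so^*(2n)$ inside $\fr{gl}(n,\Hn)=\R\cdot\Id\oplus\fr{sl}(n,\Hn)$, namely in $[S^2\E]^*=\R\cdot\Id\oplus[S^2_0\E]^*\cong\mc{W}_1\oplus\mc{W}_2$, it sits inside the chosen complement $\fr{m}$ from the decomposition preceding Proposition \ref{purew}. By uniqueness of the splitting on the image of ${\sf s}$ we conclude ${\sf s}(\nabla^H\Phi)=-A$, whence $\nabla^H-{\sf s}(\nabla^H\Phi)=\nabla^H+A=\nabla^{H,\omega}$. The equivalent reformulation $\omega(-{\sf s}(\nabla^H\Phi)\cdot,\cdot)=\tfrac12\nabla^H\omega$ then follows from the defining relation (\ref{omegA}) of $A$.

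For part (2), one repeats the same argument with $\nabla^{Q,\vol}$ in place of $\nabla^H$ and $A^{\vol}$ in place of $A$. Again $\nabla^{Q,\omega}\Phi=0$ since $\nabla^{Q,\omega}$ preserves $(Q,\omega)$, so $\nabla^{Q,\vol}\Phi={\sf p}(-A^{\vol})$. The containment of $A^{\vol}$ in the complement $\fr{m}$ is here guaranteed by the trace-freeness condition $\Tr_2(A^{\vol})=0$ established in the discussion preceding Lemma \ref{deltaAvol}, which places $A^{\vol}$ in the $\mc{W}_2=[\E\Hh]^*\otimes[S^2_0\E]^*$ part of $\fr{m}$. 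Uniqueness of ${\sf s}$ on its image then gives ${\sf s}(\nabla^{Q,\vol}\Phi)=-A^{\vol}$, i.e., $\nabla^{Q,\omega}=\nabla^{Q,\vol}-{\sf s}(\nabla^{Q,\vol}\Phi)$.

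The main obstacle I anticipate is the careful bookkeeping of the sign convention between the derivation action $\mathscr{A}\cdot{\sf F}$ and the identification ${\sf p}(\mathscr{A})$ inherited from the proof of Proposition \ref{prophol}, together with the verification that the candidate tensors $A$ and $A^{\vol}$ genuinely lie in the specific reductive complement $\fr{m}$ selected by \starconnection\ rather than in some other complement of $\fr{g}=\so^*(2n)\oplus\fr{sp}(1)$ in $\fr{gl}(4n,\R)$. Once these two points are settled, each assertion reduces to an invocation of uniqueness of the splitting.
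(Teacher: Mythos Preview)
Your proof is correct and follows essentially the same approach as the paper: both argue that the correction tensor ($A$ in part~(1), $A^{\vol}$ in part~(2)) lies in the reductive complement $\fr{m}$ (specifically in $\mc{W}_1\oplus\mc{W}_2$, respectively $\mc{W}_2$), and then invoke the injectivity of ${\sf p}$ on $\fr{m}$ to identify it with $-{\sf s}(\nabla\Phi)$. The paper's proof is terser, simply noting that ${\sf s}(\nabla^H\Phi)\in\mc{W}_1\oplus\mc{W}_2$ and appealing to ``uniqueness and the construction of the minimal connection $\nabla^{H,\omega}$'', while you spell out the mechanism explicitly; one minor point is that your parenthetical identification of \starconnection\ with Theorem~\ref{smap} conflates two distinct results (the former is Theorem~3.8 from Part~I defining $\nabla^{H,\omega}$, the latter is the splitting theorem here), but your actual argument uses each in the right place.
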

\begin{proof}
For the first case Theorem \ref{smap} gives the expression
\[
{\sf s}(\nabla^{H} \Phi):=-(\frac{\sf w_1}{8(2n-1)}+\frac{\sf w_2}{8(n-1)})({\sf c}(\nabla \Phi))\in \mc{W}_{1}\oplus \mc{W}_{2}
\]
and the results follows by the uniqueness and the construction of the minimal connection $\nabla^{H,\omega}$. Similarly, for $\nabla=\nabla^{Q, \vol}$ 
by Theorem \ref{smap} we obtain
\[
{\sf s}(\nabla^{Q,\vol} \Phi):=-\frac{\sf w_2}{8(n-1)}({\sf c}(\nabla \Phi))\in \mc{W}_2
\] and one concludes as before, by the uniqueness and the construction of the minimal connection $\nabla^{Q,\omega}$.
\end{proof}

Let us finally indicate the application of Theorem \ref{smap} and Proposition \ref{irrWi} for $\nabla=\nabla^{Q}$. In this case we obtain 
\[
{\sf s}(\nabla^{Q} \Phi):=-(\frac{\sf w_1}{8(2n-1)}+\frac{\sf w_2}{8(n-1)})({\sf c}(\nabla \Phi))\in \mc{W}_{1}\oplus \mc{W}_{2}\,.
\]
However, as in \cite[\starconnectionsp]{CGWPartI} we can remove the component $[\E\Hh]^*$ in $\delta(\mc{W}_{1}\oplus \mc{W}_{2})\cap\delta([\E\Hh]^*\otimes(\so^*(2n)\oplus \sp(1)))$. Thus, the same results as for $\nabla^{Q,\omega}$ follow.

%%%%%%%%%%%%%%%%%%%%%%%%%%%%%%%%%%%%%%%%%%%%%%%%%%%%%
%%%%%%%%%%%%%%%%%%%%%%%%%%%%%%%%%%%%%%%%%%%%%%%%%%%%%
%%%%%%%%%%%%%%%%%%%%%%%%%%%%%%%%%%%%%%%%%%%%%%%%%%%%%
%%%%%%%%%%%%%%%%%%%%%%%%%%%%%%%%%%%%%%%%%%%%%%%%%%%%%
%%%%%%%%%%%%%%%%%%section3%%%%%%%%%%%%%%%%%%%%%%%%%

\section{Algebraic types via examples}\label{examples}

In this section, we present examples of almost hs-H structures on $\R^{4n}$ that realize particular pure types of torsion $\mc{X}_{i}$ of almost hs-H structures or the induced almost qs-H structures. These will be defined by a choice of a skew-Hermitian frame $u=(e_1,\dots,e_{2n},f_1,\dots,f_{2n})$, providing the trivialization $\R^{4n}\times \SO^*(2n)$ for the $\SO^*(2n)$-structure. The main reason for this is that the frame $u$ defines a $\SO^*(2n)$-connection $\nabla^u$ with torsion $T^{\nabla^u}=\dd\vartheta$ and curvature $R^{\nabla^u}=0$, where 
\[
\vartheta=(e_1^*,\dots,e_{2n}^*,f_1^*,\dots,f_{2n}^*)
\]
is the dual coframe to $u$. Therefore, one can determine the torsion type of the examples by projecting the torsion $T^{\nabla^u}$ to the normalization condition $\mc{D}(\so^*(2n))$ from \cite[\zeroprog]{CGWPartI}, along $\delta([\E\Hh]^*\otimes \so^*(2n))$. For this, one should also use the traces $\Tr_i$ $(i=1, \ldots, 4)$ introduced in \cite[Section 3.2]{CGWPartI}, and the maps $\pi_H, {\sf Alt}$ from \cite[Section 4.1]{CGWPartI}.

Since the spaces $\R^{4n}\cong \Hn^n= \R^n\otimes_{\R}\Hn$ and $\R^{4n}\cong \Hn^n= \C^n\otimes_{\C}\Hn$ (see \cite{Brocker}) carry flat hypercomplex structures, one can also think about starting with an (almost) symplectic structure on $\R^n$ or $\C^n$ and do a "quaternionification" of it. However, this is based on existence of global trivializations $T(\R^n)=\R^n\times \R^{n}$ and $T(\C^n)=\C^n\times \C^{n}$, respectively. Indeed, for any manifold $M$ admitting a global trivialization $TM=M\times \R^{n}$ or $TM=M\times \C^{n}$, there is a global trivialization $T(M^4)=M^4\times (\R^n)^4$ or $T(M^2)=M^2\times (\C^n)^2$, where $M^{k}$ means $M\times\cdots\times M$ ($k$-times). Hence, in this case a choice of isomorphism $(\R^n)^4\cong [\E\Hh]$ or $(\C^n)^2\cong [\E\Hh]$ induces a global skew-Hermitian frame. 

\begin{example}
Consider the following matrix group 
\[
M=\begin{pmatrix}\exp(y_1)& 0 \\y_2 & \exp(-y_1) \end{pmatrix}
\]
with $y_1,y_2\in \R$.
The left invariant vector fields $e_1=\partial_{y_1}+y_2\partial_{y_2}$ and $f_1=\exp(-y_1)\partial_{y_2}$ form a symplectic basis for the following left-invariant symplectic structure on $M$:
\[
\omega_M=\exp(y_1)(\dd y_1\wedge \dd y_2)\,.
\]
Let us also denote by $(x_1,\dots,x_8)$ the corresponding coordinates on $M^{4}$ and consider trivialization $T(M^4)=M^4\times \R^8$ given by the vector fields
\[
e_{1j}=\partial_{x_{2j-1}}+x_{2j}\partial_{x_{2j}}\,,\quad f_{1j}=\exp(-x_{2j-1})\partial_{x_{2j}}\,, \quad j=1,2,3,4\,.
\]
Consider the isomorphism $\R^8\cong [\E\Hh]$ induced by the following formula for a skew-Hermitian frame
\begin{gather*}
(e_{11}+f_{12}, e_{13}+f_{14}, e_{13}-f_{14}, -e_{11}+f_{12}, f_{11}-e_{12}, f_{13}-e_{14}, f_{13}+e_{14}, -f_{11}-e_{12})\,,
\end{gather*}
which preserves the property that the trivialization is provided by a symplectic frame of the symplectic structure.
Then, the corresponding almost hs-H structure on $M^{4}$ is expressed by
\begin{eqnarray*}
\omega&=&\sum_{j=1}^4 \frac{\exp(x_{2j-1})}{2}\dd x_{2j-1}\wedge \dd x_{2j}\,,\\
I&=& (\partial_{x_5}-(x_2\exp(x_1-x_5)-x_6)\partial_{x_6}) \dd x_1+\exp(x_1-x_5)\partial_{x_6} \dd x_2\\
&& -(\partial_{x_7}-(x_4\exp(x_3-x_7)-x_8)\partial_{x_8}) \dd x_3-\exp(x_3-x_7)\partial_{x_8} \dd x_4\\
&& - (\partial_{x_1}-(x_6\exp(x_5-x_1)-x_2)\partial_{x_2}) \dd x_5-\exp(x_5-x_1)\partial_{x_2} \dd x_6\\
&& +(\partial_{x_3}-(x_8\exp(x_7-x_3)-x_4)\partial_{x_4}) \dd x_7+\exp(x_7-x_3)\partial_{x_8} \dd x_8\,,\\
J&=&- (\partial_{x_3}-(x_2\exp(x_1-x_3)-x_4)\partial_{x_4}) \dd x_1-\exp(x_1-x_3)\partial_{x_4} \dd x_2\\
&&+(\partial_{x_1}-(x_4\exp(x_3-x_1)-x_2)\partial_{x_2}) \dd x_3+\exp(x_3-x_1)\partial_{x_2} \dd x_4\\
&&-(\partial_{x_7}-(x_6\exp(x_5-x_7)-x_8)\partial_{x_8}) \dd x_5-\exp(x_5-x_7)\partial_{x_8} \dd x_6\\
&&+(\partial_{x_5}-(x_8\exp(x_7-x_5)-x_6)\partial_{x_6}) \dd x_7+\exp(x_7-x_5)\partial_{x_6} \dd x_8\,,
\end{eqnarray*}
with $K=IJ$. Moreover, the torsion of $\nabla^{u}$ has the nice expression
\[
T^{\nabla^{u}}=2\big(\dd x_1\wedge\dd x_2\otimes \partial_{x_2}+\dd x_3\wedge\dd x_4\otimes\partial_{x_4}+\dd x_5\wedge\dd x_6\otimes \partial_{x_6}+\dd x_7\wedge\dd x_8\otimes \partial_{x_8}\big)\,.
\]
By construction $\dd\omega=0$, because $\dd \omega_M=0$. Hence by Theorem \ref{symplecticthem2} this example must be at least of type $\mc{X}_{1567}$. Moreover, none of the almost complex structures $\{I,J,K\}$ are integrable, and a computation based on the traces $\Tr_i$ and the map $\pi_H$, shows that $\mc{X}_5$ and $\mc{X}_6$ are trivial. Thus, $(M^4, I, J, K, \omega)$ is of type $\mc{X}_{17}$ and the induced almost qs-H structure is of pure type $\mc{X}_{1}.$
\end{example}

\begin{example}
Let us also present an example of pure type $\mc{X}_3$. On $\R^{12}=(x_1,\dots,x_{12})$ consider the following frame $u$ and its dual co-frame $\vartheta:$
\begin{eqnarray*}
u&=&(\partial x_1,\partial x_2+x_4\partial x_6+x_7 \partial x_9+x_{10}\partial x_{12}, \partial x_3, \partial x_4,\partial x_5-x_{1}\partial x_6-x_{10}\partial x_9+x_7\partial x_{12}, \partial x_6,\\
&&\partial x_7,\partial x_8+x_{10}\partial x_6-x_1\partial x_9-x_4\partial x_{12},\partial x_9, \partial x_{10}, \partial x_{11}-x_7\partial x_6+x_4\partial x_9-x_1\partial x_{12}, \partial x_{12})\,,\\
\vartheta&=&(\dd x_1, \dd x_2, \dd x_3, \dd x_4, \dd x_5, \dd x_6-x_4\dd x_2+x_1\dd x_5-x_{10}\dd x_8+x_7\dd x_{11}, \dd x_7, \dd x_8,\\
&& \dd x_9-x_7\dd x_2+x_{10}\dd x_5+x_1\dd x_8-x_4\dd x_{11}, \dd x_{10}, \dd x_{11}, \dd x_{12}-x_{10}\dd x_2-x_{7}\dd x_5\\
&& +x_{4}\dd x_8+x_{1}\dd x_{11})\,.
\end{eqnarray*}
Then, a computation shows that
\begin{eqnarray*}
\omega&=& \dd x_1\wedge \dd x_7+\dd x_2\wedge \dd x_8+\dd x_3\wedge (\dd x_9-x_7\dd x_2+x_{10}\dd x_5+x_1\dd x_8-x_4\dd x_{11})\\
&&+\dd x_4\wedge \dd x_{10}+\dd x_5\wedge \dd x_{11}+(\dd x_6-x_4\dd x_2+x_1\dd x_5-x_{10}\dd x_8+x_7\dd x_{11})\\
&&\wedge (\dd x_{12}-x_{10}\dd x_2-x_{7}\dd x_5+x_{4}\dd x_8+x_{1}\dd x_{11})\\
\dd\vartheta&=&(0, 0, 0, 0, 0, -\dd x_4\wedge \dd x_2+\dd x_1\wedge \dd x_5-\dd x_{10}\wedge \dd x_8+\dd x_7\wedge \dd x_{11},\\
&& 0, 0, -\dd x_7\wedge \dd x_2+\dd x_{10}\wedge \dd x_5+\dd x_1\wedge \dd x_8-\dd x_4\wedge \dd x_{11}, 0, 0,\\
&& -\dd x_{10}\wedge \dd x_2-\dd x_{7}\wedge \dd x_5+\dd x_{4}\wedge \dd x_8+\dd x_{1}\wedge \dd x_{11})\\
T^{\nabla^u}&=&(\dd x_2\wedge \dd x_4+\dd x_1\wedge \dd x_5+\dd x_{8}\wedge \dd x_{10}+\dd x_7\wedge \dd x_{11})\otimes \partial x_6\\
&&+(\dd x_2\wedge \dd x_7-\dd x_{5}\wedge \dd x_{10}+\dd x_1\wedge \dd x_8-\dd x_4\wedge \dd x_{11})\otimes \partial x_9\\
&&+(\dd x_{2}\wedge \dd x_{10}+\dd x_{5}\wedge \dd x_7+\dd x_{4}\wedge \dd x_8+\dd x_{1}\wedge \dd x_{11})\otimes \partial x_{12}\,.
\end{eqnarray*}
With the help of \cite[\kernel]{CGWPartI} we see that the traces $\Tr_1(T^{\nabla^u}),\dots,\Tr_4(T^{\nabla^u})$ vanish, and the intrinsic torsion components $\mc{X}_4,\mc{X}_6,\mc{X}_7$ are trivial. Although the maps ${\sf Alt}$ and $\pi_H$ do not commute, by applying them to $T^{\nabla^u}$ we conclude that this example is of pure type $\mc{X}_{3}=[\K\Hh]^*$.
\end{example}

\begin{example}
Finally, we describe an example where a conformal change of the symplectic structure is applied. On the open set $U\subset \R^{8}=(x_1,\dots,x_8)$ of $\R^8$ given by $x_1\neq 0$, consider the following frame $u$ and its dual co-frame $\vartheta:$
\begin{eqnarray*}
u&=&(\frac1{x_1}\partial x_1,\frac1{x_1}\partial x_2,\frac1{x_1} \partial x_3,\frac1{x_1} \partial x_4,\frac1{x_1}\partial x_5,\frac1{x_1}\partial x_7,\frac1{x_1}\partial x_8)\,,\\
\vartheta&=&(x_1\dd x_1, x_1\dd x_2, x_1\dd x_3, x_1\dd x_4, x_1\dd x_5, x_1\dd x_6, x_1 \dd x_7, x_1 \dd x_8)\,.
\end{eqnarray*}
Therefore, the scalar 2-form is given by $\omega=x_1^2\sum_{a=1}^{4}\dd x_a\wedge \dd x_{4+a}$ and we compute
\begin{eqnarray*}
\dd\vartheta&=&(0,\dd x_1\wedge \dd x_2, \dd x_1\wedge \dd x_3, \dd x_1\wedge \dd x_4, \dd x_1\wedge \dd x_5, \dd x_1\wedge \dd x_6, \dd x_1\wedge \dd x_7, \dd x_1\wedge \dd x_8)\,,\\
T^{\nabla^u}&=&\sum_{a=2}^8 \frac{1}{x_1}\dd x_1\wedge \dd x_a\otimes \partial x_a= \delta(\frac{1}{2x_1}\dd x_1\otimes \id)\,.
\end{eqnarray*}
By Remark \ref{confchang} we know that projections to $\mc{X}_4$ and $\mc{X}_7$ are proportional to $\frac{1}{2x_1}\dd x_1$, and thus in this example the corresponding $\SO^*(2n)$-structure is of type $\mc{X}_{47}$. In particular, the almost hypercomplex structure is hypercomplex, according to Theorem \ref{hypercomplexthm}, and the induced almost qs-H structure is of pure type $\mc{X}_{4}$.
\end{example}

%%%%%%%%%%%%%%%%%%%%%%%%%%%%%%%%%%%%%%%%%%%%%%%%%%%%%
%%%%%%%%%%%%%%%%%%%%%%%%%%%%%%%%%%%%%%%%%%%%%%%%%%%%%
%%%%%%%%%%%%%%%%%%%%%%%%%%%%%%%%%%%%%%%%%%%%%%%%%%%%%
%%%%%%%%%%%%%%%%%%%%%%%%%%%%%%%%%%%%%%%%%%%%%%%%%%%%%
%%%%%%%%%%%%%%%%%%section4%%%%%%%%%%%%%%%%%%%%%%%%%

\section{Homogeneous $\SO^*(2n)$- and $\SO^*(2n)\Sp(1)$-structures}\label{homogeneousex}
In this section we focus on $\SO^*(2n)$- and $\SO^*(2n)\Sp(1)$-structures which are invariant under the action of a Lie group $K$. In other words, we treat $4n$-dimensional (connected) homogeneous manifolds $K/L$ admitting a $K$-invariant $\SO^*(2n)$-structure or a $K$-invariant $\SO^*(2n)\Sp(1)$-structure (for details on homogeneous spaces we refer to \cite{Hel, CS}). 

\subsection{A characterization of invariant scalar 2-forms}
Let $K/L$ be a homogeneous space where $L\subset K$ is a closed Lie subgroup of $K$, and let us denote by $\chi : L\to\Aut(\fr{k}/\fr{l})$ the isotropy representation of $L$ on $\fr{k}/\fr{l}\cong T_{o}K/L$, where $o=eL\in K/L$ is the identity coset.
Recall by \cite[\firstjet]{CGWPartI} that hypercomplex/quaternionic symplectomorphisms are determined by their first jet. Therefore, to discuss invariant $\SO^*(2n)$- or $\SO^*(2n)\Sp(1)$-structures on a homogeneous space $K/L$ as above, 
without loss of generality we may assume that the isotropy representation $\chi$ is faithful. 
So we can identify the Lie algebra $\fr{l}$ of the stabilizer $L$ with the Lie algebra $\chi_{*}(\fr{l})$ of the linear isotropy group $\chi(L)\subset\Aut(\fr{k}/\fr{l})$.
Recall also that a homogeneous space $K/L$ is called \textsf{reductive} if there exists a $\Ad(L)$-invariant complement $\fr{m}$ of $\fr{l}$ in $\fr{k}$, that is, 
\[
\fr{k}=\fr{l}\oplus\fr{m}\,,\quad\text{and}\quad \Ad(L)\fr{m}\subset\fr{m}\,.
\]
The last relation implies the inclusion $[\fr{l}, \fr{m}]\subset\fr{m}$. When $K/L$ is reductive, we can identify $\fr{m}\cong T_{o}K/L$ and $\chi$ with $\Ad : L\to\Aut(\fr{m})$, i.e., $\chi\cong\Ad_{K}|_{L \times \fr{m}}$.

It is known that the geometric properties of $K/L$ can be examined by restricting our attention to the origin $o=eL$. In particular, $K$-invariant tensors on $K/L$ are in bijective correspondence with $\chi(L)$-invariant tensors on the tangent space $T_{o}K/L$. Thus, for example we see that 
\begin{enumerate}
\item A $K$-invariant almost hypercomplex structures corresponds to a linear hypercomplex structure $H=\{J_{a} : a=1, 2, 3\}$ on $\frak{k}/\frak{l}$ for which each $J_{a}\in\Ed(\fr{k}/\fr{l})$ commutes with the isotropy representation $\chi$.
\item A $K$-invariant almost quaternionic structure corresponds to a linear quaternionic structure $Q\subset\Ed(\frak{k}/\frak{l})$ on $\frak{k}/\frak{l}$ which is invariant (as a subspace) for the natural action of $K$.
\end{enumerate}
Consider a homogeneous space $K/L$ with a $K$-invariant almost hypercomplex structure $H$ or with a $K$-invariant almost quaternionic structure $Q$. Then it makes sense to consider the corresponding space $\Lambda^2_{\sc}(\frak{k}/\frak{l})^*$ of scalar 2-forms on $\frak{k}/\frak{l}$ with respect to $H_o$ or $Q_o$, respectively. Given such a 2-form $\omega_o$, we may use it to identify $\fr{k}/\fr{l}$ and its dual $(\fr{k}/\fr{l})^*$. Next we shall denote by 
$\Lambda^2_{\sc}(\frak{k}/\frak{l})^{\fr{l}}$ the space of $\chi(L)$-invariant scalar 2-forms on $M=K/L$ and by $\Omega_{\sc}^2(M)^{K}$ the corresponding smooth sections, i.e., smooth $K$-invariant scalar 2-forms on $M=K/L$. 
\begin{prop}
$\al)$ \ On a homogeneous space $M=K/L$ with a $K$-invariant almost hypercomplex structure $H=\{I, J, K\}$ the following are equivalent: 
\begin{enumerate}
\item[${\sf 1)}$] There is a $K$-invariant almost hs-H structure $(H, \omega)$ on $K/L$, i.e., $\omega\in\Omega_{\sc}^2(M)^{K}$;
\item[${\sf 2)}$] The linear scalar 2-form $\omega_o$ on $\fr{k}/\fr{l}\cong T_{o}K/L$ is invariant under the isotropy representation $\chi : L \to\Aut(\frak{k}/\frak{l})$, i.e., $\omega_o\in \Lambda^2_{\sc}(\frak{k}/\frak{l})^{\fr{l}}$;
\item[${\sf 3)}$] There is a basis of $\fr{k}/\fr{l}$ adapted to $H_o=\{I_o, J_o, K_o\}$ (in terms of \cite[\EHbases]{CGWPartI}) inducing a Lie group homomorphism $i : L\to \Gl(n, \Hn)\subset\Gl([\E\Hh])$, such that $(\dd i)(\fr{l})\subset\fr{so}^*(2n)\subset\fr{gl}([\E\Hh])$.
\end{enumerate}
$\beta)$ \ On a homogeneous space $K/L$ with a $K$-invariant almost quaternionic structure $Q$ the following are equivalent: 
\begin{enumerate}
\item[${\sf 1)}$] There is a $K$-invariant almost qs-H structure $(Q, \omega)$ on $K/L$, i.e., $\omega\in\Omega_{\sc}^2(M)^{K}$;
\item[${\sf 2)}$] The linear scalar 2-form $\omega_o$ on $\fr{k}/\fr{l}\cong T_{o}K/L$ is invariant under the isotropy representation $\chi : L \to\Aut(\frak{k}/\frak{l})$, i.e., $\omega_o\in \Lambda^2_{\sc}(\frak{k}/\frak{l})^{\fr{l}}$;
\item[${\sf 3)}$] For any admissible basis $H_o$ of $Q_o$, there is a basis of $\fr{k}/\fr{l}$ adapted to $H_o$ inducing a Lie group homomorphism $i : L\to \Gl(n, \Hn)\Sp(1)\subset\Gl([\E\Hh])$, such that $(\dd i)(\fr{l})\subset\fr{so}^*(2n)\oplus\fr{sp}(1)\subset\fr{gl}([\E\Hh])$.
\end{enumerate}
\end{prop}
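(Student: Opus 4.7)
The plan is to treat the two sets of equivalences $\alpha$ and $\beta$ uniformly, since the argument is formally identical: in case $\alpha$ the relevant stabilizer is $\SO^*(2n)=\Gl(n,\Hn)\cap \Sp(4n,\R)$, and in case $\beta$ it is $\SO^*(2n)\Sp(1)=(\Gl(n,\Hn)\Sp(1))\cap \Sp(4n,\R)$, and both identifications have already been set up in \cite[Section 1]{CGWPartI}.

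For $(1)\Leftrightarrow (2)$ I would invoke the standard bijective correspondence between $K$-invariant sections of natural bundles associated to the frame bundle of $K/L$ and $\chi(L)$-invariant tensors on $\fr{k}/\fr{l}\cong T_oK/L$. A $K$-invariant 2-form $\omega$ is uniquely determined by $\omega_o\in \Lambda^2(\fr{k}/\fr{l})^*$, and such a value extends consistently to a globally defined $K$-invariant section if and only if it is preserved by the isotropy representation $\chi(L)$. Since the scalar condition with respect to $H$ (or $Q$) is a pointwise algebraic condition and both the almost (hyper)complex/quaternionic structure and $\omega$ are $K$-invariant, it suffices to check compatibility at the single point $o$, which propagates to all of $M$ via the action of $K$.

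For $(2)\Leftrightarrow (3)$ I would argue via adapted bases. Given $\omega_o\in\Lambda^2_{\sc}(\fr{k}/\fr{l})^{\fr{l}}$, a skew-Hermitian basis of $(\fr{k}/\fr{l},H_o,\omega_o)$ exists by \cite[\basSKEW]{CGWPartI} (in case $\beta$, one first fixes any admissible $H_o\in \Ss(Q_o)$). Such a basis provides a linear isomorphism $u:\fr{k}/\fr{l}\to [\E\Hh]$ sending $H_o\mapsto H_0$ and $\omega_o\mapsto \omega_0$. Because $\chi$ is faithful and $\chi(L)$ preserves $H_o$ (by $K$-invariance of $H$) and $\omega_o$ (by assumption), the transferred homomorphism $i=u\circ\chi\circ u^{-1}: L\to \Gl([\E\Hh])$ takes values in the joint stabilizer of $H_0$ and $\omega_0$, which is $\SO^*(2n)$ in case $\alpha$ and $\SO^*(2n)\Sp(1)$ in case $\beta$. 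Differentiating at the identity yields the Lie algebra inclusion required by (3). Conversely, given (3), the pullback $u^*\omega_0$ is a scalar 2-form on $\fr{k}/\fr{l}$ that is infinitesimally preserved by $\chi(L)$, and since $i(L)\subset \Gl(n,\Hn)$ (resp.\ $\Gl(n,\Hn)\Sp(1)$) at the group level while $(di)(\fr{l})\subset \fr{so}^*(2n)$ (resp.\ $\fr{so}^*(2n)\oplus\fr{sp}(1)$), the preserved 2-form recovers condition (2).

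The main obstacle and principal subtlety is proving independence of the auxiliary choice of admissible basis $H_o$ in case $\beta$: different admissible bases of $Q_o$ differ by the action of an element of $\Sp(1)$, and one must verify that this affects neither the statement nor the validity of the constructed homomorphism. This is resolved by noting that the $\Sp(1)$-action on $[\E\Hh]$ normalizes $\fr{so}^*(2n)$ (this sits inside the definition of $\SO^*(2n)\Sp(1)$) and preserves $\omega_0$, so the inclusion $(di)(\fr{l})\subset\fr{so}^*(2n)\oplus\fr{sp}(1)$ is manifestly invariant under the change. A secondary, connectedness-type concern in passing from the Lie algebra inclusion to $\chi(L)$-invariance of $\omega_o$ is handled by the assumption $i(L)\subset\Gl(n,\Hn)$ (resp.\ $\Gl(n,\Hn)\Sp(1)$) already given in (3) at the group level, together with the fact that this group normalizes $\fr{so}^*(2n)$; no deep additional analysis is needed.
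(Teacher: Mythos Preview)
Your proposal is correct and matches the paper's approach: the paper in fact states this proposition without proof, treating it as a direct consequence of the standard bijection between $K$-invariant tensors on $K/L$ and $\chi(L)$-invariant tensors on $\fr{k}/\fr{l}$ that it recalls immediately beforehand. Your argument simply makes that reasoning explicit, and the use of a skew-Hermitian basis to transport the structures to $[\E\Hh]$ is exactly the intended mechanism.

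One small correction to your resolution of the connectedness issue in $(3)\Rightarrow(2)$: the fact that $\Gl(n,\Hn)$ normalizes $\fr{so}^*(2n)$ does \emph{not} force elements of $i(L)\subset\Gl(n,\Hn)$ to preserve $\omega_0$ (nonzero real scalars normalize $\fr{so}^*(2n)$ but rescale $\omega_0$). The paper's own notation $\Lambda^2_{\sc}(\fr{k}/\fr{l})^{\fr l}$, with the Lie-algebra superscript, indicates that condition (2) is intended at the infinitesimal level, so $(3)\Rightarrow(2)$ is immediate from $(\dd i)(\fr{l})\subset\fr{so}^*(2n)$; the passage $(2)\Rightarrow(1)$ then tacitly assumes $L$ connected (or that $\fr{l}$- and $L$-invariance coincide here). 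This is an imprecision in the statement rather than a flaw in your strategy.
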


Let us now discuss invariant adapted connections. 
\begin{corol}
The minimal $\SO^*(2n)$-connections $\nabla^{H,\omega}$ and $\SO^*(2n)\Sp(1)$-connections $\nabla^{Q,\omega}$ for $K$-invariant almost hs-H/qs-H structure on $K/L$, respectively, are $K$-invariant connections.
\end{corol}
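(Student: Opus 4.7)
The plan is to exploit uniqueness: the minimal connections $\nabla^{H,\omega}$ and $\nabla^{Q,\omega}$ are characterized abstractly by being adapted connections whose torsion lies in a prescribed $\SO^*(2n)$- (respectively $\SO^*(2n)\Sp(1)$-) invariant normalization complement (see \cite[\starconnection, \starconnectionsp, \zeroprog]{CGWPartI}). Any naturally defined object with such a uniqueness property is automatically invariant under the automorphism group of the underlying $G$-structure, because pulling back the unique solution by a structure-preserving diffeomorphism produces a second solution, which must then coincide with the first.

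Concretely, I would first  fix $k\in K$ and consider the pullback connection $k^{*}\nabla^{H,\omega}$.  Since by assumption $k^{*}H = H$ and $k^{*}\omega = \omega$, and since adapted-ness  is a natural property (the frame bundle reduction $\mc{P}$ is mapped to itself by the lift of $k$), the pullback $k^{*}\nabla^{H,\omega}$ is again an $\SO^{*}(2n)$-connection on $(M,H,\omega)$. Next I would verify that the torsion of the pullback satisfies the same normalization condition as the torsion of $\nabla^{H,\omega}$. This is immediate because the torsion tensor is natural under pullback, $T^{k^{*}\nabla^{H,\omega}} = k^{*}T^{H,\omega}$, and because the splitting of $\Tor(M)$ into intrinsic torsion plus the normalization complement $\mc{D}(\so^{*}(2n))$ is $\SO^{*}(2n)$-equivariant, hence preserved by any structure-preserving diffeomorphism.

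By the uniqueness part of \cite[\zeroprog]{CGWPartI}, the pullback $k^{*}\nabla^{H,\omega}$ must  therefore equal $\nabla^{H,\omega}$, i.e., $\nabla^{H,\omega}$ is $K$-invariant. The argument for $\nabla^{Q,\omega}$ is identical, with the Lie group $\SO^{*}(2n)$ replaced by $\SO^{*}(2n)\Sp(1)$ and the normalization complement from \cite[\starconnection]{CGWPartI} replaced by the one from \cite[\starconnectionsp]{CGWPartI}; the crucial ingredients are (i) that $K$ preserves $(Q,\omega)$, (ii) that adapted-ness and the chosen normalization depend only on the $G$-structure, and (iii) uniqueness of the minimal adapted connection relative to that normalization.

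I do not expect any real obstacle: the substance of the proof was already absorbed in establishing uniqueness in Part I. The only point that deserves a remark is the naturality of the normalization condition itself; this is transparent because it is formulated in representation-theoretic terms, via $\SO^{*}(2n)$- (resp.\ $\SO^{*}(2n)\Sp(1)$-) equivariant projections on $\Tor(M)$, and hence commutes with the action of the lift of any $k\in K$ to the relevant frame bundle $\mc{P}$ (resp.\ $\mc{Q}$).
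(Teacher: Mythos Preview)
Your proposal is correct and follows essentially the same approach as the paper: both argue that a structure-preserving diffeomorphism (i.e., a hypercomplex/quaternionic symplectomorphism) carries an adapted connection to an adapted connection with the corresponding (pulled-back) torsion, so by uniqueness of the minimal connection relative to the chosen normalization the pullback must coincide with the original. Your write-up is simply a more explicit unpacking of the one-sentence argument the paper gives.
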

\begin{proof}
Since hypercomplex/quaternionic symplectomorphisms map $\SO^*(2n)$- and $\SO^*(2n)\Sp(1)$-connections onto $\SO^*(2n)$- and $\SO^*(2n)\Sp(1)$-connections with related torsion, respectively, the claim follows from uniqueness of the minimal connections $\nabla^{H,\omega}$ and $\nabla^{Q,\omega}$.
\end{proof}

In order to capture the information about general invariant adapted connections, we need to consider the following generalization of a classical result of \cite{Wang}, see also \cite[Prop. 1.5.15]{CS}.
\begin{prop}\label{extension}
On a homogeneous space $K/L$ the following holds:\\
\noindent \textsf{1)} There is a $K$-invariant almost hs-H structure on $K/L$ together with $K$-invariant $\SO^*(2n)$-connection $\nabla$, if and only if there is Lie group homomorphism $i: L\to \SO^*(2n)$ and a linear map $\alpha: \fr{k}\to [\E\Hh]\oplus \so^*(2n)$ that restricts to a linear isomorphism $\fr{k}/\fr{l}\cong [\E\Hh]$ and satisfies 
\[
\alpha(Y)=(\dd i)(Y)\,,\quad \alpha(\Ad(\ell)(X))=\Ad(i(\ell))(\alpha(X))\,,
\]
for all $X\in \fr{k}$, $Y\in \fr{l}$ and $\ell\in L$.\\
\noindent \textsf{2)}There is a $K$-invariant almost qs-H structure on $K/L$ together with $K$-invariant $\SO^*(2n)\Sp(1)$-connection $\nabla$, if and only if there is Lie group homomorphism $i: L\to \SO^*(2n)\Sp(1)$ and a linear map $\alpha: \fr{k}\to [\E\Hh]\oplus \so^*(2n)\oplus \sp(1)$ that restricts to a linear isomorphism $\fr{k}/\fr{l}\cong [\E\Hh]$ and satisfies
\[
\alpha(Y)=(\dd i)(Y)\,,\quad \alpha(\Ad(\ell)(X))=\Ad(i(\ell))(\alpha(X))\,,
\]
for all $X\in \fr{k}$, $Y\in \fr{l}$ and $\ell\in L$.
\end{prop}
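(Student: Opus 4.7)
The plan is to adapt the classical Wang correspondence for $K$-invariant principal connections over homogeneous spaces, enhanced so that the $G$-reduction itself (the $\SO^*(2n)$- or $\SO^*(2n)\Sp(1)$-structure) is encoded in the same linear map. I describe the argument for part (\textsf{1}); part (\textsf{2}) runs verbatim once $\SO^*(2n)$ is replaced by $\SO^*(2n)\Sp(1)$ and $\so^*(2n)$ by $\so^*(2n)\oplus\fr{sp}(1)$, using the fact that admissible bases of $Q$ are only defined up to the $\Sp(1)$-factor, so that $\SO^*(2n)\Sp(1)$ is the correct stabilizer on $\mc{Q}\to M$.

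First, I would encode the $G$-structure by algebraic data at the origin. A $K$-invariant $\SO^*(2n)$-reduction $\mc{P}\subset\mc{F}(M)$ is determined by its fibre $\mc{P}_o$ over $o=eL$, a single $\SO^*(2n)$-orbit of skew-Hermitian frames. Choosing $u_0\in\mc{P}_o$ identifies $T_oM\cong[\E\Hh]$; since $L$ preserves $\mc{P}_o$ and $\SO^*(2n)$ acts simply transitively on it, each $\ell\in L$ determines a unique $i(\ell)\in\SO^*(2n)$ with $\chi(\ell)\circ u_0 = u_0\circ i(\ell)$, giving a Lie group homomorphism $i:L\to\SO^*(2n)$. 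The frame $u_0$ further induces a linear isomorphism $\alpha_0:\fr{k}/\fr{l}\to[\E\Hh]$ intertwining $\chi$ with $\Ad\circ i$. Conversely, any pair $(i,\alpha_0)$ with this equivariance reconstructs the reduction as the associated bundle $K\times_{(L,i)}\SO^*(2n)\to K/L$.

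Second, I would invoke Wang's theorem for the reduced homogeneous principal bundle (see e.g.~\cite[Prop.~1.5.15]{CS}): $K$-invariant principal $\SO^*(2n)$-connections on $\mc{P}$ correspond bijectively to linear maps $\omega_0:\fr{k}\to\so^*(2n)$ whose restriction to $\fr{l}$ equals $\dd i$ and which are $L$-equivariant with respect to $\Ad_K$ on the source and $\Ad_{\SO^*(2n)}\circ i$ on the target. I then combine the two pieces: extend $\alpha_0$ trivially on $\fr{l}$ via the quotient $\fr{k}\to\fr{k}/\fr{l}$ to obtain $\tilde\alpha_0:\fr{k}\to[\E\Hh]$, and set $\alpha:=\tilde\alpha_0\oplus\omega_0$. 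The two requirements of the proposition translate directly: the condition $\alpha(Y)=\dd i(Y)$ for $Y\in\fr{l}$ is the simultaneous statement that $\tilde\alpha_0|_{\fr{l}}=0$ (automatic by construction) and $\omega_0|_{\fr{l}}=\dd i$, while the equivariance $\alpha(\Ad(\ell)X)=\Ad(i(\ell))(\alpha(X))$ decomposes component-wise into equivariance of $\alpha_0$ (with $\Ad(i(\ell))$ acting on $[\E\Hh]$ through the inclusion $\SO^*(2n)\hookrightarrow\Gl([\E\Hh])$) and equivariance of $\omega_0$ (with $\Ad(i(\ell))$ acting on $\so^*(2n)$ by the standard adjoint). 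The non-degeneracy statement that $\alpha|_{\fr{k}/\fr{l}}$ is a linear isomorphism to $[\E\Hh]$ is precisely non-degeneracy of $\alpha_0$. The inverse construction, $\alpha\mapsto(i,\alpha_0,\omega_0)$, is evident.

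The main technical point is bookkeeping rather than a genuine obstacle: since $K/L$ is not assumed reductive, one cannot split $\fr{k}=\fr{l}\oplus\fr{m}$ equivariantly, which is exactly why the soldering condition must be phrased via the quotient $\fr{k}/\fr{l}$ and not through a chosen complement. When such a complement exists, $\alpha$ decomposes into a Nomizu-type soldering form on $\fr{m}$ and a Nomizu map on $\fr{k}$, recovering the classical description alluded to in the introduction; without it, the combined packaging through $[\E\Hh]\oplus\so^*(2n)$ is essential. No ingredient beyond Wang's theorem and the identification of $K$-invariant $G$-structures with their fibre over the origin is needed.
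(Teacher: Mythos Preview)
Your proposal is correct and aligns with the paper's proof: both rely on \cite[Prop.~1.5.15]{CS} and package the soldering form and connection form into the single map $\alpha$. The only difference is presentational---the paper phrases everything at once as a Cartan connection of affine type $([\E\Hh]\rtimes\SO^*(2n),\SO^*(2n))$, whereas you first build the $G$-structure via $(i,\alpha_0)$ and then apply Wang's theorem to the reduced bundle to obtain $\omega_0$, combining them afterwards; these are two descriptions of the same correspondence.
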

\begin{proof}
These claims follow for example by arguments as in \cite{CS}. Indeed, the tautological 1-form on the geometric structure together with the connection 1-form of the $K$-invariant connection $\nabla$, form a Cartan connection of affine type $([\E\Hh]\rtimes \SO^*(2n), \SO^*(2n))$ and 
\[
([\E\Hh]\rtimes \SO^*(2n)\Sp(1),\SO^*(2n)\Sp(1))\,,
\]
respectively. Conversely, the projections to $\so^*(2n)$ or $\so^*(2n)\oplus \sp(1)$ (along $[\E\Hh]$) of the restriction of the map $\alpha$ to $\fr{k}/\fr{l}$ induces (together with the Maurer-Cartan form on $K$) the $K$-invariant $\SO^*(2n)$-connection or $\SO^*(2n)\Sp(1)$-connection, respectively. 
Note that $K\times_i \SO^*(2n)$ or $K\times_i \SO^*(2n)\Sp(1)$ are the underlying invariant $\SO^*(2n)$-type structures, where the tautological 1-form is induced by the projection to $[\E\Hh]$ of the restriction of the map $\alpha$ to $\fr{k}/\fr{l}$.
\end{proof}
Let $\fr{g}$ be one of the Lie algebras $\fr{so}^*(2n)$ or $\fr{so}^*(2n)\oplus\fr{sp}(1)$, and let $(\alpha,i)$ be a pair satisfying the conditions of Proposition \ref{extension}. 
The linear map $\alpha: \fr{k}\to [\E\Hh]\oplus\fr{g}$ splits into the maps 
\[
\alpha_{[\E\Hh]}: \fr{k}\to [\E\Hh]\,, \quad \alpha_{\fr{g}}: \fr{k}\to \fr{g}\,.
\]
The first provides the isomorphism $\fr{k}/\fr{l}\cong [\E\Hh]$, while the second one is the so-called \textsf{Nomizu map}. Note that $\alpha_{[\E\Hh]}|_\fr{l}=0$ and $\alpha_{\fr{g}}|_\fr{l}=\dd i.$ Let $\nabla$ be a $K$-invariant connection on $K/L$ corresponding to the pair $(\alpha,i)$. The torsion $T^{\nabla}_{o}\in \Lambda^{2}[\E\Hh]^*\otimes[\E\Hh]$ of $\nabla$ is given by 
\[
T^{\nabla}_{o}(x, y)=\alpha_{\fr{g}}(\alpha_{[\E\Hh]}^{-1}(x))y-\alpha_{\fr{g}}(\alpha_{[\E\Hh]}^{-1}(y))x-\alpha_{[\E\Hh]}([\alpha_{[\E\Hh]}^{-1}(x), \alpha_{[\E\Hh]}^{-1}(y)]_{\fr{k}})\,,
\]
for any $x,y \in [\E\Hh]$.\\
Similarly, the curvature $R^{\nabla}_{o}\in \Lambda^2 [\E\Hh]^*\otimes \fr{g}$ of $\nabla$ 
has the form
\[
R^{\nabla}_{o}(x,y)=[\alpha_{\fr{g}}(\alpha_{[\E\Hh]}^{-1}(x)),\alpha_{\fr{g}}(\alpha_{[\E\Hh]}^{-1}(y))]_{\fr{g}}-\alpha_{\fr{g}}([\alpha_{[\E\Hh]}^{-1}(x), \alpha_{[\E\Hh]}^{-1}(y)]_{\fr{k}})\,,
\]
for any $x,y\in [\E\Hh]$.

On a homogeneous space $K/L$ with a reductive decomposition $\fr{k}=\fr{m}\oplus \fr{l}$, the description of $K$-invariant $\SO^*(2n)$- and $\SO^*(2n)$-structures in terms of Proposition \ref{extension} is simplified. In this case we use $\alpha_{[\E\Hh]}$ to identify $\fr{m}$ with $[\E\Hh]$ and thus restrict $\alpha_{\fr{g}}$ to a map $\alpha_{\nabla}: [\E\Hh]\to \fr{g}$. In particular,
\begin{eqnarray*}
T^{\nabla}_{o}(x, y)&=&\alpha_{\nabla}(x)y-\alpha_{\nabla}(y)x-[x,y]_{\fr{m}}\,,\\
R^{\nabla}_{o}(x,y)&=&[\alpha_{\nabla}(x),\alpha_{\nabla}(y)]_{\fr{g}}-\alpha_{\nabla}([x,y]_{\fr{m}})-(\dd i)([x,y]_{\fr{l}})\,,
\end{eqnarray*}
for any $x,y \in [\E\Hh]\cong\fr{m}$, where we split $[x,y]_{\fr{k}}=[x,y]_{\fr{m}}+ [x,y]_{\fr{l}}$ according to the reductive decomposition. 
\begin{rem}\textnormal{
Recall that there is a $K$-invariant $G$-connection $\nabla^u$ with $\alpha_{\nabla}=0$, called the \textsf{canonical connection} on $K/L$, which depends on the reductive complement $\fr{m}$. The torsion and curvature of $\nabla^u$ are given by (see \cite{KoNo})
\[
T^{\nabla^u}_{o}(x, y)=-[x,y]_{\fr{m}}\,,\quad\quad R^{\nabla^u}_{o}(x,y)=-(\dd i)([x,y]_{\fr{l}})
\]
respectively, for any $x,y\in \fr{m}\cong[\E\Hh]$. The canonical connection is the closest analogue of the connections $\nabla^u$ discussed in Section \ref{examples}, when we interpret $u$ as a local frame of 
\[
T\cc\exp(\fr{m})L\rr\subset T(K/L)
\]
induced by left invariant vector fields corresponding to a skew-Hermitian basis of $\fr{m}$. Indeed, the intrinsic torsion of such $\SO^*(2n)$- or $\SO^*(2n)\Sp(1)$-geometries is given by 
\[
p(T^{\nabla^u}_{o})=p(-[\cdot,\cdot]_{\fr{m}})\,.
\]
} 
\end{rem}

\begin{example}
Consider the homogeneous space $\Sl(4,\R)/\Sl(2,\R)$. We may identify the Lie algebra $\fr{sl}(2, \R)$ of $L=\Sl(2, \R)$ with the following matrices
\[
\fr{l}=\left\{\left( \begin {array}{cccc} { 0}&{ 0}&0&0
\\ \noalign{\medskip}{0}& 0&0&0
\\ \noalign{\medskip}0&0&{ l_1}&{ 
l_2}\\ \noalign{\medskip}0&0&{ l_3}&-{ l_1}\end {array} \right) : l_1,l_2,l_3\in \R\right\}\,.
\]
\noindent A reductive complement $\fr{m}$ of $\fr{l}$ in $\fr{k}$ is given by
\[
\fr{m}= \left\{\left( \begin {array}{cccc} { a_3}&{ a_9}&{ a_1}+{ 
a_5}+{ a_7}+ a_{11}&-{ a_1}-{ a_5}
\\ \noalign{\medskip}{ a_6}& a_{12}&-2\,{ a_1}-2\,
a_{11}&{ a_1}+{ a_5}-{ a_7}+a_{11}
\\ \noalign{\medskip}{ a_2}-{ a_4}-{ a_8}-a_{10}&{
a_8}+ a_{10}&-\frac12\,{ a_3}-\frac12\, a_{12}&{ 0
}\\ \noalign{\medskip}2\,{ a_2}-2\, a_{10}&-{ a_2}-{
a_4}+{ a_8}+ a_{10}&{0}&-\frac12\,{ a_3}-
\frac12\, a_{12}\end {array} \right)\right\}\,,
\]
with $a_1,\dots,a_{12}\in \R$. 
Define the map 
\[
\alpha_{[\E\Hh]}(A):=(a_1,\dots,a_{12})\,,
\]
where the right hand side are coordinates in an skew-Hermitian basis $\mc{B}$ of $[\E\Hh]$. It is not hard to check that $\fr{l}$ is mapped into $\fr{so}^*(6)$ via the map induced on endomorphisms by the map $\alpha_{[\E\Hh]} : \fr{sl}(4; \R)\to [\E\Hh]$. Moreover, it integrates to a Lie algebra homomorphism 
\[
i: \Sl(2,\R)\to \SO^*(6)\,.
\]
The explicit form of $i$ is given by a $12\times 12$ matrix, which we avoid to present. 
Clearly, the map $\alpha$ with components $\alpha|_{\fr{l}}=\dd i$ and $\alpha|_{\fr{m}}=\alpha_{[\E\Hh]}$, i.e.,
\[
\al=\alpha|_{\fr{l}}+\alpha|_{\fr{m}}=\dd i + \alpha_{[\E\Hh]}\,,
\]
together with the map $i$, satisfy the conditions of Proposition \ref{extension}. Therefore 
\begin{prop}
The homogeneous space $\Sl(4,\R)/\Sl(2,\R)$ carries a $\Sl(4,\R)$-invariant $\SO^*(6)$-structure. In particular, after the identification $\fr{m}\cong[\E\Hh]$ we have $H_{o}=H_0,\omega_{o}=\omega_0$ for the standard hypercomplex structure $H_0$ and standard symplectic form $\omega_0$ on $[\E\Hh]$. 
\end{prop}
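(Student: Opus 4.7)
The plan is to invoke Proposition \ref{extension} with the data already assembled just above the statement. Concretely, we have the reductive decomposition $\fr{k}=\fr{l}\oplus\fr{m}$ with $\dim\fr{m}=12=\dim[\E\Hh]$, a candidate linear isomorphism $\alpha_{[\E\Hh]}:\fr{m}\to[\E\Hh]$ sending the given basis of $\fr{m}$ to the skew-Hermitian basis $\mc{B}$, and a Lie algebra homomorphism $\dd i : \fr{l}\to\fr{so}^*(6)$ obtained from the action of $\fr{l}$ on $\fr{m}$ via $\ad$ transported through $\alpha_{[\E\Hh]}$.  Since $L=\Sl(2,\R)$ is connected and the target group $\SO^*(6)$ is a Lie subgroup of $\Gl([\E\Hh])$, the map $\dd i$ integrates to a unique Lie group homomorphism $i:\Sl(2,\R)\to\SO^*(6)$. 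Thus, setting $\alpha:=\alpha_{[\E\Hh]}\oplus(\dd i)$, we obtain a candidate pair $(\alpha,i)$ as in Proposition \ref{extension}.

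The verification then splits into three steps. First, $\alpha$ automatically restricts to a linear isomorphism $\fr{k}/\fr{l}\cong[\E\Hh]$ by the definition of $\alpha_{[\E\Hh]}$ on $\fr{m}$ and the fact that $\alpha|_\fr{l}=\dd i$. Second, the compatibility $\alpha(Y)=(\dd i)(Y)$ for $Y\in\fr{l}$ is tautological. Third, and this is the only nontrivial point, the equivariance
\[
\alpha(\Ad(\ell)(X))=\Ad(i(\ell))(\alpha(X))\,,\qquad X\in\fr{k}\,,\ \ell\in L\,,
\]
must be checked. Because $L$ is connected, it suffices to verify the infinitesimal version
\[
\alpha([Y,X]_\fr{k})=[\,(\dd i)(Y),\alpha(X)\,]\,,\qquad Y\in\fr{l}\,,\ X\in\fr{k}\,,
\]
which, by reductivity, further splits into the case $X\in\fr{l}$ (automatic since $\dd i$ is a Lie algebra homomorphism) and the case $X\in\fr{m}$, where the identity reduces to saying that the operator $\ad(Y)|_\fr{m}$, read off in the coordinates $(a_1,\dots,a_{12})$, coincides with the matrix $(\dd i)(Y)\in\fr{so}^*(6)\subset\fr{gl}([\E\Hh])$ acting on $[\E\Hh]$.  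This is a direct matrix computation: one picks the three generators of $\fr{l}$ (for $l_1,l_2,l_3$), brackets each against the general element of $\fr{m}$, rewrites the result in the $(a_1,\dots,a_{12})$-coordinates of $\fr{m}$, and confirms that the resulting three $12\times 12$ matrices are all skew-Hermitian in the standard model $([\E\Hh],H_0,\omega_0)$, that is, they lie in $\fr{so}^*(6)=\fr{sp}(\omega_0)\cap\fr{gl}(3,\Hn)$.

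The main obstacle of the proof is precisely this last matrix check, i.e.~verifying that the $\ad$-action of $\fr{sl}(2,\R)$ on the reductive complement $\fr{m}$ lies in $\fr{so}^*(6)$ after transport by $\alpha_{[\E\Hh]}$. The computation is routine but tedious, and crucially depends on the specific choice of $\fr{m}$ made above; it is essentially dictated by the structure constants of the explicit embedding $\fr{sl}(2,\R)\hookrightarrow\fr{sl}(4,\R)$ chosen for $\fr{l}$. Once this is established, Proposition \ref{extension} immediately produces the desired $\Sl(4,\R)$-invariant $\SO^*(6)$-structure on $\Sl(4,\R)/\Sl(2,\R)$.

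Finally, the ``in particular'' statement follows at no extra cost. By construction, the identification $\alpha_{[\E\Hh]}:\fr{m}\xrightarrow{\sim}[\E\Hh]$ sends the chosen basis of $\fr{m}$ to the skew-Hermitian basis $\mc{B}$ of $[\E\Hh]$, so the $\SO^*(6)$-structure at the origin $o=eL$ is precisely the one associated to $\mc{B}$. Since $\mc{B}$ is by definition adapted to the pair $(H_0,\omega_0)$ in the sense of \cite[\basSKEW]{CGWPartI}, the induced tensors at $T_oM\cong\fr{m}\cong[\E\Hh]$ are exactly $H_o=H_0$ and $\omega_o=\omega_0$, as claimed.
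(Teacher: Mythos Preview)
Your proposal is correct and follows essentially the same route as the paper: both assemble the pair $(\alpha,i)$ from the explicit reductive complement $\fr{m}$ and the map $\alpha_{[\E\Hh]}$, verify that the isotropy action of $\fr{l}$ on $\fr{m}$ (transported through $\alpha_{[\E\Hh]}$) lands in $\fr{so}^*(6)$, and then invoke Proposition~\ref{extension}.

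One minor wording issue worth tightening: your sentence ``Since $L=\Sl(2,\R)$ is connected \ldots\ the map $\dd i$ integrates to a unique Lie group homomorphism'' is not quite the right justification, because $\Sl(2,\R)$ is not simply connected. The point is rather that the isotropy representation $\chi:L\to\Gl(\fr{m})\cong\Gl([\E\Hh])$ is already a Lie group homomorphism; what remains is to check that its image lies in $\SO^*(6)$, and for this connectedness of $L$ reduces the question to the Lie algebra level, which is your matrix check. The paper uses the same shortcut phrasing (``it integrates''), so this is a cosmetic fix rather than a gap.
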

Let $\nabla$ be the $K$-invariant connection on $K/L=\Sl(4,\R)/\Sl(2,\R)$ induced by the pair $(\alpha,i)$. Since $\alpha_{\nabla}=0$, the $K$-invariant $\SO^*(6)$-connection corresponding to the pair $(\alpha,i)$ is again the canonical connection $\nabla^u$. A direct computation of the torsion of $\nabla^{u}$ shows that the discussed $\SO^*(6)$-structure on $\Sl(4,\R)/\Sl(2,\R)$ is of generic type $\mc{X}_{1234567}$. 
\end{example}

%%%%%%%%%%%%%%%%%%%%%%%%%%%%%%%%%%%%%%%%%%%%%%%%%%%%%
%%%%%%%%%%%%%%%%%%%%%%%%%%%%%%%%%%%%%%%%%%%%%%%%%%%%%
%%%%%%%%%%%%%%%%%%%%%%%%%%%%%%%%%%%%%%%%%%%%%%%%%%%%%
%%%%%%%%%%%%%%%%%%%%%%%%%%%%%%%%%%%%%%%%%%%%%%%%%%%%%
%%%%%%%%%%%%%%%%%%section5%%%%%%%%%%%%%%%%%%%%%%%%%
\section{$\SO^*(2n)\Sp(1)$-structures and the bundle of Weyl structures }\label{Weylss}
The final section is about constructions providing examples of $\SO^*(2n)\Sp(1)$-manifolds, which arise within the context of \textsf{cotangent bundles} and \textsf{Weyl structures}. 
We begin with the following observation in the linear setting, which provides the main motivation for what it follows.

\begin{prop}\label{quat-doubledim}
	Let $(U, Q)$ be a quaternionic vector space. Then the contangent space $T^*U\cong U\times U^*$ admits a canonical linear qs-H structure. The quaternionic structure is the one induced by the natural action of $Q$, and the scalar 2-form $\omega$ is the canonical symplectic form given by the natural pairing $U\times U^*\to \R$.
	Under the identification of $U$ with the right quaternionic vector space $\Hn^n$, the map 
	\[
	\rho : \Hn^n\oplus (\Hn^n)^* \ni (u, \xi)\longmapsto (u, \bar{\xi}^{t})\in \Hn^{2n} 
	\]
	provides a natural quaternionic Darboux basis, see \cite[\darbas]{CGWPartI}.
\end{prop}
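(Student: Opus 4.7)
The plan is to transport $Q$ to a diagonal quaternionic structure on $T^*U = U \oplus U^*$, verify that the canonical symplectic form is skew-Hermitian for this action, and then realize the resulting pair as the standard data on $\Hn^{2n}$ via $\rho$.

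First I would define, for each $\J \in \Ss(Q)$, a complex structure $\widetilde{\J}$ on the real dual $U^*$ by $\widetilde{\J}\xi := -\xi\circ\J$. The minus sign is essential: the naive choice $\xi \mapsto \xi\circ\J$ would yield $I^t J^t = (JI)^t = -K^t$ (transpose is an anti-homomorphism), whereas $\widetilde{\J} = -\J^t$ satisfies $\widetilde{I}\widetilde{J}\xi = \xi\circ J\circ I = -\xi\circ K = \widetilde{K}\xi$, restoring the correct quaternion relations. The diagonal action $\widetilde{\J}(u\oplus\xi) := \J u \oplus \widetilde{\J}\xi$ then assembles into a linear quaternionic structure $\widetilde{Q}$ on $T^*U$, manifestly independent of the admissible basis used to describe $Q$.

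Next I would verify $\J$-Hermiticity of the canonical symplectic form $\omega(u\oplus\xi, v\oplus\eta) = \eta(u) - \xi(v)$ by the short computation
\[
\omega\bigl(\widetilde{\J}(u\oplus\xi), \widetilde{\J}(v\oplus\eta)\bigr) = (\widetilde{\J}\eta)(\J u) - (\widetilde{\J}\xi)(\J v) = -\eta(\J^2 u) + \xi(\J^2 v) = \eta(u) - \xi(v),
\]
valid for every $\J \in \Ss(\widetilde{Q})$. Non-degeneracy of $\omega$ is automatic, so $(\widetilde{Q}, \omega)$ is a linear qs-H structure on $T^*U$.

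For the third step I would unwind $\rho$ in coordinates. Taking $U = \Hn^n$, represent $\xi \in (\Hn^n)^*$ as a row $(\xi^1,\ldots,\xi^n)$ acting by $\xi(u) = \sum \xi^a u^a$, so that $\bar\xi^t$ is the column $(\bar\xi^1,\ldots,\bar\xi^n)^t \in \Hn^n$. A direct component-wise check shows that the identification $\xi \leftrightarrow \bar\xi^t$ intertwines $\widetilde{\J}$ on $U^*$ with right multiplication by the corresponding imaginary unit on $\Hn^n$: the conjugation in $\bar\xi^t$ converts the output-side right multiplication coming from $\xi \circ \J$ into a genuine right $\Hn$-action on the column, precisely cancelling the sign introduced in $\widetilde{\J} = -\J^t$. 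Hence $\rho$ sends $\widetilde{Q}$ onto $Q_0 \subset \Ed(\Hn^{2n})$, and a parallel matching of the canonical pairing $\eta(u) - \xi(v)$ with the standard scalar $2$-form $\omega_0$ shows that the image of the standard dual basis under $\rho$ is a skew-Hermitian basis in the sense of \basSKEW, i.e.\ a quaternionic Darboux basis in the sense of \darbas.

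The main obstacle is the consistent bookkeeping for left vs.\ right $\Hn$-module structures on the dual: naive dualization of the right $\Hn$-action on $\Hn^n$ produces the opposite right structure on $(\Hn^n)^*$, so both the sign in $\widetilde{\J} = -\J^t$ and the conjugation in $\bar\xi^t$ are required, and they must be mutually consistent. Once these conventions are pinned down, all remaining steps reduce to short algebraic identities and the explicit matching of two bilinear forms on $\Hn^{2n}$.
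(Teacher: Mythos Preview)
Your argument is correct and essentially self-contained: you build the quaternionic structure on $U^*$ by hand via $\widetilde{\J}=-\J^t$, verify the quaternion relations and the $\J$-Hermiticity of the canonical pairing directly, and then match everything with the standard data on $\Hn^{2n}$ through the coordinate description of $\rho$. The sign and conjugation bookkeeping you flag is exactly the crux, and you have handled it consistently.

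The paper proceeds differently. Rather than constructing $\widetilde{Q}$ explicitly and checking Hermiticity, it observes that $\rho$ induces a Lie group homomorphism $\Gl(n,\Hn)\Sp(1)\to\Gl(2n,\Hn)\Sp(1)$ whose differential lands in $\so^*(4n)_0\subset\so^*(4n)$, the reductive part of the $|1|$-grading of $\so^*(4n)$ established in \cite[\gradbas]{CGWPartI}. This single Lie-algebraic inclusion simultaneously certifies that $\rho$ is quaternionic-linear and preserves the standard scalar $2$-form, so the compatibility of $(\widetilde Q,\omega)$ is read off from the structure theory rather than verified entrywise. Your route is more elementary and requires no appeal to the grading results of Part~I; the paper's route is shorter once that machinery is in place and ties the proposition directly into the parabolic framework exploited later in Section~\ref{Weylss}.
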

\begin{proof}
	The linear map $\rho$ induces a Lie group homomorphism $\Gl(n, \Hn)\Sp(1)\to \Gl(2n, \Hn)\Sp(1)$. It is a simple observation that $\rho_{*}(\fr{gl}(n, \Hn))\subset\fr{so}^*(4n)_{0}\subset\fr{so}^*(4n)$, where $\fr{so}^*(4n)_{0}$ is the reductive part of the $|1|$-grading of $\fr{so}^*(4n)$ presented in \cite[\gradbas]{CGWPartI}. We observe that in the bases provided by $\rho$ the natural linear quaternionic structure on $U\times U^*$ coincides with the standard quaternionic structure induced by the corresponding quaternionic Darboux basis on $T^*U$. Similarly, the natural pairing 
\[
((\cdot, \cdot)) :\Hn^n\oplus (\Hn^n)^*\to\R,\quad ((u, \xi))=\xi(u)\,,\quad \forall\ u\in \Hn^n, \xi\in (\Hn^n)^*\,,
\]
corresponds to the standard symplectic form obtained by the quaternionic Darboux basis (and hence it is a scalar 2-form with respect the induced linear quaternionic structure on $U\times U^*$).
\end{proof}

Below we show that there are two possible ways to generalize this result to the manifold setting. One of them is based on the canonical almost symplectic structure $\omega_{\sf{W}}$ which exists on the \textsf{bundle of Weyl structures} over an almost quaternionic manifold found in \cite{CM}. The other one relies on the \textsf{canonical symplectic structure} $\omega_{\sf{C}}$ which exists on the cotangent bundle of any manifold. 
Next our aim is to discuss conditions for existence of almost quaternionic structures $Q$ for which $\omega_{\sf{C}}$ and $\omega_{\sf W}$ become scalar 2-forms with respect to $Q$. 
A further aim is to proceed by comparing these two $\SO^*(2n)\Sp(1)$-structures, a procedure which allows us to derive a theorem in terms of the \textsf{${\sf P}$-tensor}, appearing in parabolic geometries.

\subsection{Weyl structures}
One of the methods used in geometry and theory of PDEs is to introduce new "jet variables" which make the geometric objects or differential equations under examination, linear (in these new variables). When one works in a coordinate free setting, the basic tool to relate the new variables with the initial manifold, is encoded by the notion of connections. The use of connections with torsion makes the background theory of semiholonomic jets unnecessarily complicated in applications, and in favourable situations, one can encode such derivatives
via Cartan connections.

The basic object in the theory of Cartan geometries is the \textsf{Cartan bundle}. That is, a principal bundle $\mathcal{G}\to N$ over a smooth manifold $N$, with structure group $P$, which should be viewed as the bundle of new "jet coordinates". Such a $P$ should be considered as the Lie group of "jets of transition maps between the jet coordinates". The actual assignment of the coordinates to points $u\in \mathcal{G}$ is provided by a Cartan connection of type $(G,P)$, that is an isomorphism $T_u \mathcal{G}\cong \fr{g}$ to the Lie algebra $\fr{g}$ of $G$. 
Of course, it is natural to require that this map is equivariant with respect to the "change of coordinates" given by $P$ and reproduces the fundamental vector fields of the $P$-action on $\mathcal{G}$. In these coordinates, the tangent space is identified with $\mathcal{G}\times_{\chi} \fr{g}/\fr{p}$, where $\chi$ denotes the isotropy representation of $G/P$. Set $G_0:=P/\ker(\chi)$. Then, the quotient $\mathcal{G}_0:=\mathcal{G}/\ker(\chi)$ defines a $G_0$-structure on $N$.
\begin{defi}
The \textsf{Weyl structures} of the Cartan geometry are $G_0$-equivariant sections $\mathcal{G}_0\to \mathcal{G}$ for a splitting $G_0\to P$ of the projection $P\to P/\ker(\chi)$.
\end{defi}
The Weyl structures are in a bijective correspondence with smooth sections of the bundle $\mathcal{G}/G_0$, which is therefore called the \textsf{bundle of Weyl structures} associated to the Cartan geometry (see \cite{CS03, CS, CM} for Weyl structures related to parabolic geometries).

Let us assume that there is a $G_0$-invariant decomposition 
\[
\fr{g}=\fr{g}_-\oplus \fr{g}_0 \oplus \fr{p}_+\,,
\]
where $\fr{g}_-\cong \fr{g}/\fr{p}$ and $\fr{p}_+$ is the Lie algebra of $\ker(\chi)$. Then, the pullback on $\mathcal{G}_0$ of a Cartan connection by a section $\mathcal{G}_0\to \mathcal{G}$ (Weyl structure), decomposes according to its image to
\begin{itemize}
\item the component with values in $\fr{g}_-$, which is the tautological 1-form defining the underlying $G_0$-structure.
\item the component with values in $\fr{g}_0$, which is the connection 1-form of a $G_0$-connection.
\item the component with values in $\fr{p}_+$, which is usually called the ${\sf P}$-tensor, see \cite{CS03, CS}.
\end{itemize}
\begin{rem}
\textnormal{In the case of almost quaternionic geometries, $\mathcal{G}_0$ corresponds to the $\Gl(n,\Hn)\Sp(1)$-structure and a Weyl structure is equivalent to an Oproiu connection. This is because the bundle $\mathcal{G}$, viewed as a subbundle of the second order semiholonomic frame bundle, is spanned by all of the Oproiu connections. Moreover, the ${\sf P}$-tensor carries the remaining information about the Cartan connection (which is uniquely determined by the other data as we review later).}
\end{rem}

\subsection{The use of the bundle of Weyl structures}
Let $G={\sf P}\Gl(n+1,\Hn)$ be the projective linear group satisfying $\Sl(n+1, \Hn)/\Z_2\cong {\sf P}\Gl(n+1,\Hn)$, and $P$ be the parabolic subgroup stabilizing the quaternionic line spanned by the first basis vector in $\Hn^{n+1}$.
Recall that the quaternionic projective space is the homogeneous space $N=\Hn {\sf P}^n = G/P$, which serves as the flat model of the associated Cartan geometry.
A Levi subgroup of $P$ is given by $G_0=\Gl(n,\Hn)\Sp(1)$ and we have the following $G_0$-invariant decomposition into $\fr{g}_0$-modules: 
\[
\fr{g}=\sl(n+1,\Hn)=\g_-\oplus \fr{g}_0\oplus \p_+\,, \quad T_{eP}G/P\cong \fr{g}_{-}\,,\quad \fr{p}=\fr{g}_0\oplus \p_+\,.
\] 
The Lie group $G$ coincides with the total space of the Cartan bundle $\mc{G}\to N$ over $N$ and the Cartan connection is provided by the Maurer-Cartan form of $G$. That is, we have a canonical trivialization of the tangent bundle
	\[
	TG=G\times \sl(n+1,\Hn)\,.
	\]
Therefore, the homogeneous space $G/G_0$ coincides with the bundle of Weyl structures over $G/P=\Hn {\sf P}^n$ and in particular the pseudo-Wolf space 
\[
\widetilde{M}:=\Sl(n+1,\mathbb{H})/(\Gl(1,\mathbb{H})\Sl(n,\mathbb{H}))
\]
covers $M=G/G_0={\sf P}\Gl(n+1,\Hn)/\Gl(n,\Hn)\Sp(1)$. Note that locally, this is the analogue of the linear construction provided by Proposition \ref{quat-doubledim}. This is because $\exp(\fr{g}_{-})$ has an open orbit in $N$ which coincides with the quaternionic vector space $\fr{g}_{-}\cong T_x\Hn {\sf P}^n$.
Therefore, as a corollary of Proposition \ref{quat-doubledim} and \cite[\homogthem]{CGWPartI} we state the following
\begin{prop}\label{quatertangentmodel}
	The $8n$ dimensional homogeneous space $M=G/G_0$ has a natural torsion-free $G$-invariant qs-H structure. In particular, the invariant scalar 2-form $\omega_{\sf W}$ is induced by the pairing between $\g_-$ and $\p_+$, provided by the Killing form of $\sl(n+1,\Hn)$, that is $\g_-\cong [\E\Hh]$ and $\p_+\cong [\E\Hh]^*$.
\end{prop}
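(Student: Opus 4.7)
The plan is to reduce Proposition \ref{quatertangentmodel} to the linear model of Proposition \ref{quat-doubledim} combined with the homogeneous extension result \homogthem{} from Part I. First I would identify the tangent space at the origin $o=eG_0\in M$ with the reductive complement $\fr{m}:=\fr{g}_-\oplus\fr{p}_+$ of $\fr{g}_0$ in $\fr{g}=\sl(n+1,\Hn)$; this decomposition is $\fr{g}_0$-invariant (and in fact $G_0$-invariant) because it comes from the $|1|$-grading of $\sl(n+1,\Hn)$ induced by the parabolic $\fr{p}=\fr{g}_0\oplus\fr{p}_+$. Under this identification the isotropy representation $\chi : G_0\to\Aut(\fr{m})$ is the restriction of the adjoint action, which preserves the splitting. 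Moreover, as $G_0=\Gl(n,\Hn)\Sp(1)$-modules one has $\fr{g}_-\cong [\E\Hh]$ (the standard representation that defines the underlying quaternionic structure on $\Hn{\sf P}^n$), and the Killing form of $\sl(n+1,\Hn)$ restricts to a non-degenerate $\fr{g}_0$-equivariant pairing $\fr{g}_-\times\fr{p}_+\to\R$, yielding $\fr{p}_+\cong\fr{g}_-^*\cong[\E\Hh]^*$ as $G_0$-modules.

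Next I would invoke Proposition \ref{quat-doubledim} to equip $\fr{m}\cong[\E\Hh]\oplus[\E\Hh]^*$ with its canonical linear qs-H structure, whose scalar 2-form is the natural pairing. That proposition also shows the induced map $\Gl(n,\Hn)\Sp(1)\to\Gl(2n,\Hn)\Sp(1)$ factors through $\SO^*(4n)\Sp(1)$, so the linear qs-H structure is $G_0$-invariant on $\fr{m}$. Applying \homogthem{} then delivers a $G$-invariant qs-H structure $(Q,\omega_{\sf W})$ on $M=G/G_0$, with $\omega_{\sf W}$ at $o$ given precisely by the Killing pairing between $\fr{g}_-$ and $\fr{p}_+$ (up to a normalization constant that is fixed once one identifies $\fr{g}_-\cong[\E\Hh]$ canonically; the constant is determined by Schur's lemma since $\fr{g}_-$ is $\fr{g}_0$-irreducible).

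For the torsion-freeness I would use the canonical connection $\nabla^u$ of the reductive decomposition $\fr{g}=\fr{g}_0\oplus\fr{m}$, whose torsion at $o$ is given by $T^{\nabla^u}_o(x,y)=-[x,y]_{\fr{m}}$. The decisive structural input is that the $|1|$-grading forces both $\fr{g}_-$ and $\fr{p}_+$ to be abelian, while $[\fr{g}_-,\fr{p}_+]\subset\fr{g}_0$; hence $[\fr{m},\fr{m}]\subset\fr{g}_0$ and $T^{\nabla^u}\equiv 0$. Since $\chi$ factors through $\SO^*(4n)\Sp(1)$ and the canonical connection automatically preserves every $G$-invariant tensor on $M$, in particular $Q$ and $\omega_{\sf W}$, it is a torsion-free adapted connection for the qs-H structure, which gives the torsion-free statement. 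The main delicate point I anticipate is the bookkeeping of the several isomorphisms (adjoint action versus linear qs-H action, Killing pairing versus natural pairing), ensuring that what Proposition \ref{quat-doubledim} produces on the linear level really coincides with the $G_0$-module structure induced from $\fr{g}_-\oplus\fr{p}_+$; this reduces to a Schur-type uniqueness argument but requires fixing conventions carefully.
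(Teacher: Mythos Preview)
Your argument is correct, and the route you take for torsion-freeness is actually more direct than the paper's. Both you and the paper identify $T_oM\cong\fr{g}_-\oplus\fr{p}_+$ and appeal to Proposition~\ref{quat-doubledim} for the linear qs-H model. The difference is in how torsion-freeness is obtained: the paper transfers the torsion-free qs-H structure from the pseudo-Wolf space $\widetilde{M}=\Sl(n+1,\Hn)/(\Gl(1,\Hn)\Sl(n,\Hn))$ to $M$ via the covering $\widetilde{M}\to M$, invoking \homogthem{} for the structure on $\widetilde{M}$; you instead use the $|1|$-grading directly to see that $[\fr{m},\fr{m}]\subset\fr{g}_0$, so the canonical connection of the reductive decomposition is an adapted torsion-free connection. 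Your route is self-contained and exposes precisely why the symmetric nature of $M$ (as a locally symmetric space) forces the intrinsic torsion to vanish.

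One point of care: you cite \homogthem{} as the mechanism that promotes the $G_0$-invariant linear qs-H structure on $\fr{m}$ to a $G$-invariant structure on $M$. In this paper \homogthem{} is the reference to Theorem~5.2 of Part~I, which concerns the specific pseudo-Wolf space $\widetilde{M}$ rather than a general homogeneous extension principle. The step you actually need---that $G_0$-invariant tensors on $\fr{m}$ correspond to $G$-invariant tensors on $G/G_0$---is elementary homogeneous space theory (recalled in Section~\ref{homogeneousex}) and does not require \homogthem{}. With that citation adjusted, your proof stands on its own and arguably sheds more light on the mechanism than the paper's appeal to the covering.
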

\begin{proof}
The $G_0$-equivariant projection $\pi:TG\to TM$ yields the decomposition
	\[
		TM=G\times_{G_0} \cc\sl(n+1,\Hn)/\fr{g}_0\rr=G\times_{G_0} (\g_-\oplus \p_+)\,,
	\]
and so $TM$ inherits an almost quaternionic structure from the canonical linear quaternionic structures on $\g_-\cong [\E\Hh]$ and $\p_+\cong [\E\Hh]^*$. In fact, as follows from \cite[\homogthem]{CGWPartI} this is an invariant quaternionic structure, since the covering $\tilde{M}\to M$ is a quaternionic symplectomorphism. 
\end{proof}

Next we shall generalize the above result by providing a universal construction within the category of non-integrable $\SO^*(2n)\Sp(1)$-structures. 
This lets us work with a general almost quaternionic manifold $(N, Q)$, instead of the flat model $\Hn {\sf P}^n$.
For this goal it is useful to construct the Cartan connection from local data first, i.e., a (local) quaternionic co-frame 
\[
\nu: TN\to \g_-\cong [\E\Hh]\,.
\]
So, let us assign to $(N, Q)$ the Cartan bundle $\mathcal{G}\rightarrow N$ together with a normal Cartan connection. Note that locally, a choice of a co-frame $\nu$ as above provides the local trivialization 
\[
\mathcal{G}=N\times G_0\times \exp(\p_+)\,.
\]
The pullback on $N$ of the Cartan connection is the following matrix of 1-forms on $N$:
\[
\left(\begin{matrix}
\gamma_i\nu^i& {\mbox{\sf P}}_i\nu^i\\
\nu& \Gamma_i\nu^i
\end{matrix}\right)\in \left(\begin{matrix}
\sp(1)+\R & \p_+\\
\g_-& \gl(n,\Hn)
\end{matrix}\right)\,.
\]
Here, the sum $\gamma_i+ \Gamma_i: TN\to \fr{g}_0$ can be viewed as the connection 1-form of a Oproiu connection $\nabla^{Q}$ and ${\sf P}$ is the ${\sf P}$-tensor in $T^*N\otimes T^*N$. Up to change of Oproiu connection, these are uniquely determined by the following normalization condition::
\[
\partial^*\kappa:=\sum_i 2\{Z_i,\kappa(.,X_i)\}=0\,,
\]
where $\kappa: N\to \Lambda^2 \g_-^*\otimes \sl(n+1,\Hn)$ is the curvature of the Cartan connection, that is
\[
\kappa:=\dd\left(\begin{smallmatrix}
\gamma_i\nu^i& {\mbox{\sf P}}_i\nu^i\\
\nu& \Gamma_i\nu^i
\end{smallmatrix}\right)+\{\left(\begin{smallmatrix}
\gamma_i\nu^i& {\mbox{\sf P}}_i\nu^i\\
\nu& \Gamma_i\nu^i
\end{smallmatrix}\right),\left(\begin{smallmatrix}
\gamma_i\nu^i& {\mbox{\sf P}}_i\nu^i\\
\nu& \Gamma_i\nu^i
\end{smallmatrix}\right)\}\,.
\]
Here, $\{ \ \, \ \}$ is the Lie bracket in $\sl(n+1,\Hn)$ and $\{X_i\}, \{Z_i\}$ are dual bases of $\g_-,\p_+$, respectively. The requirement of $\gamma_i$ taking values in $\sp(1)$, together with the normalization condition $\partial^*\kappa=0$, assigns to the co-frame $\nu$ a {\it unique} unimodular Oproiu connection and a unique symmetric ${\sf P}$-tensor. This provides the construction of the Cartan connection starting with the co-frame $\nu$, since at the point 
\[
(x, g_0,\exp(Z))\in N\times G_0\times \exp(\p_+)=\mathcal{G}
\] the Cartan connection consists of two parts: namely of the $\Ad(g_{0}\exp(Z))^{-1}$-action on the above pullback, and of the Maurer-Cartan form on $P=G_0\times \exp(\p_+).$

Now, based on a local description one can observe that the choice of an Oproiu connection $\nabla^{Q}$ provides a global trivialization $\mathcal{G}= \mathcal{G}_0\times \exp(\p_+)$. Then the pullback on $\mc{G}_0$ decomposes as above, which means that the ${\sf P}$-tensor is globally defined. In particular,
\begin{corol}
The bundle of Weyl structures $\mathcal{G}/G_0$ over $N$ trivializes to $\mathcal{G}_0\times_{G_0} \exp(\p_+)$. 
\end{corol}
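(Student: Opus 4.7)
The plan is to use the standard correspondence between Oproiu connections on $(N,Q)$ and Weyl structures for the underlying normal parabolic geometry of type $(G,P)$, so that the chosen connection $\nabla^Q$ supplies a $G_0$-equivariant section $\sigma\colon \mathcal{G}_0\to\mathcal{G}$ of the quotient map $\mathcal{G}\to\mathcal{G}_0=\mathcal{G}/\exp(\p_+)$. Since $\mathcal{G}\to\mathcal{G}_0$ is a principal $\exp(\p_+)$-bundle, existence of a global section $\sigma$ yields a diffeomorphism
\[
\Psi\colon \mathcal{G}_0\times\exp(\p_+)\longrightarrow \mathcal{G},\qquad (u_0,\exp Z)\longmapsto \sigma(u_0)\cdot\exp Z,
\]
which is precisely the trivialization $\mathcal{G}=\mathcal{G}_0\times\exp(\p_+)$ mentioned just before the corollary.

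Next I would compute the residual $G_0$-action in this trivialization. Using the Levi decomposition $P\cong G_0\ltimes\exp(\p_+)$, the identity $\exp(Z)\cdot g_0=g_0\cdot\Ad(g_0^{-1})\exp(Z)$ holds in $P$, and combined with the $G_0$-equivariance $\sigma(u_0\cdot g_0)=\sigma(u_0)\cdot g_0$ of the Weyl structure this gives
\[
\Psi(u_0,\exp Z)\cdot g_0 \;=\; \sigma(u_0)\,\exp(Z)\,g_0 \;=\; \sigma(u_0\cdot g_0)\cdot\Ad(g_0^{-1})\exp(Z) \;=\; \Psi\bigl(u_0\cdot g_0,\;\Ad(g_0^{-1})\exp(Z)\bigr).
\]
Hence, under $\Psi$, the right $G_0$-action on $\mathcal{G}$ corresponds to the diagonal action on $\mathcal{G}_0\times\exp(\p_+)$, namely the principal action on the first factor and the adjoint action on the second.

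Taking the quotient by $G_0$ on both sides of $\Psi$ then produces
\[
\mathcal{G}/G_0 \;\cong\; \bigl(\mathcal{G}_0\times\exp(\p_+)\bigr)\big/G_0 \;=\; \mathcal{G}_0\times_{G_0}\exp(\p_+),
\]
which is the stated trivialization of the Weyl bundle. The main point that requires care is confirming the semidirect structure $P\cong G_0\ltimes\exp(\p_+)$ and verifying that the conjugation identity used above behaves smoothly in $\exp(\p_+)$; both are standard for $|1|$-graded parabolics and, in our case, follow from the decomposition $\fr{g}=\fr{g}_-\oplus\fr{g}_0\oplus\p_+$ recalled before the corollary, with $\exp(\p_+)$ identified as the unipotent radical of $P$ normalized by $G_0$.
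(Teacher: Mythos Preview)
Your proposal is correct and follows essentially the same approach as the paper: the paper simply observes (in the sentence immediately preceding the corollary) that a choice of Oproiu connection $\nabla^Q$ provides a global trivialization $\mathcal{G}=\mathcal{G}_0\times\exp(\p_+)$, and the corollary is then stated as an immediate consequence of passing to the $G_0$-quotient. Your argument spells out the details the paper leaves implicit---the $G_0$-equivariant section $\sigma$, the explicit diffeomorphism $\Psi$, and the verification of the diagonal $G_0$-action---but the underlying idea is identical.
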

Clearly, sections of this bundle are in bijective correspondence with Oproiu connections on $N$. The formulas for how these trivializations (and the corresponding pullbacks) change with the change of the Oproiu connection, can be found in \cite{CS}. However we will not need them explicitly.

The main advantage of the use of the Cartan connections for the description of almost quaternionic structures is that it immediately gives rise to the following result.

\begin{theorem}\label{weylstr}
The construction from Proposition \ref{quatertangentmodel} extends to a functor from the category of almost quaternionic manifolds to the category of almost qs-H manifolds. This functor restricts to a functor from the category of quaternionic manifolds to the category of almost qs-H manifolds with symplectic scalar 2-form $\omega_{\sf{W}}$.
\end{theorem}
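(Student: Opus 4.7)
The plan is to construct the functor by assembling the ingredients developed earlier in the section. For an almost quaternionic manifold $(N,Q)$ of dimension $4n$ ($n>1$), invoke the general correspondence between almost quaternionic structures and normal parabolic geometries of type $(G,P)=({\sf P}\Gl(n+1,\Hn),P)$ to obtain the associated Cartan bundle $\mathcal{G}\to N$ with its normal Cartan connection $\omega^{\mathcal{G}}\in\Omega^1(\mathcal{G},\sl(n+1,\Hn))$. Then set
\[
F(N,Q):=M:=\mathcal{G}/G_0,
\]
which is the bundle of Weyl structures over $N$, of dimension $8n$. The Cartan connection furnishes the canonical identification $TM\cong \mathcal{G}\times_{G_0}(\g_-\oplus \p_+)$, so the fiber of $TM$ at each point is modelled on $\g_-\oplus \p_+\cong [\E\Hh]\oplus [\E\Hh]^*$.

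Next, equip $M$ with the almost qs-H structure. By Proposition \ref{quat-doubledim}, $[\E\Hh]\oplus [\E\Hh]^*$ carries a canonical linear quaternionic structure together with a scalar 2-form given by the natural pairing, both stabilized by $\Gl(n,\Hn)\Sp(1)$ via the embedding
\[
\fr{g}_0=\fr{gl}(n,\Hn)\oplus \fr{sp}(1)\hookrightarrow \fr{so}^*(4n)\oplus \fr{sp}(1).
\]
Since the adjoint action of $G_0$ on $\g_-\oplus \p_+$ is exactly this embedding, the linear data descend to well-defined tensor fields $Q_M\subset \Ed(TM)$ and $\omega_{\sf W}\in \Omega^2(M)$ on $M$, forming an almost qs-H structure; that $\omega_{\sf W}$ coincides with the form induced by the Killing form follows from Proposition \ref{quatertangentmodel} applied fiberwise.

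For functoriality, a morphism $\phi:(N,Q)\to (N',Q')$ of almost quaternionic manifolds lifts, by uniqueness of the normal Cartan connection, to a $P$-equivariant bundle morphism $\tilde\phi:\mathcal{G}\to\mathcal{G}'$ intertwining $\omega^{\mathcal{G}}$ and $\omega^{\mathcal{G}'}$. In particular $\tilde\phi$ is $G_0$-equivariant and descends to a smooth map $F(\phi):M\to M'$; equivariant intertwining of the Cartan connections then forces $F(\phi)_*$ to respect the identifications of $TM,TM'$ with $\mathcal{G}\times_{G_0}(\g_-\oplus\p_+)$ and $\mathcal{G}'\times_{G_0}(\g_-\oplus\p_+)$, whence $F(\phi)^*Q_{M'}=Q_M$ and $F(\phi)^*\omega_{\sf W'}=\omega_{\sf W}$. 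Composition and identity laws follow from the corresponding uniqueness of Cartan lifts.

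Finally, for the restriction to quaternionic manifolds, note that $(N,Q)$ being $1$-integrable is equivalent to the vanishing of the harmonic torsion component of the Cartan curvature $\kappa$ (equivalently, the torsion of the Oproiu connections). The plan is to compute $\dd\omega_{\sf W}$ via the structure equations satisfied by $\omega^{\mathcal{G}}$ and express it as a contraction of the $\p_+$-valued and $\g_-$-valued components of $\kappa$; the $\g_-$-component is precisely the harmonic torsion, and the $\p_+$-component contributes trivially because of the symmetry of the ${\sf P}$-tensor enforced by the normalization $\partial^*\kappa=0$. This yields $\dd\omega_{\sf W}=0$ in the quaternionic case, recovering the result of \cite{CM}. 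The main obstacle is this last step: one must carry out the structure-equation calculation carefully, keeping track of Lie brackets $\{\cdot,\cdot\}$ in $\sl(n+1,\Hn)$ between $\g_-$ and $\p_+$, and verify that all potential non-vanishing terms are absorbed either by the normalization of ${\sf P}$ or by the vanishing of harmonic torsion.
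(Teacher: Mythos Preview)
Your construction of the functor --- invoking the canonical normal Cartan geometry, setting $M=\mathcal{G}/G_0$, identifying $TM\cong\mathcal{G}\times_{G_0}(\g_-\oplus\p_+)$, and descending the $G_0$-invariant linear qs-H structure from Proposition~\ref{quat-doubledim} --- is exactly the paper's approach; you simply spell out the functoriality argument (via uniqueness of the Cartan lift of quaternionic morphisms) in more detail than the paper, which merely observes that the Cartan connection and trivialization are invariant under quaternionic automorphisms.

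The one divergence is in the symplectic claim. The paper does not compute $\dd\omega_{\sf W}$ at all: it invokes \cite[Theorem~3.1]{CM} directly. Your proposed structure-equation computation is in principle a legitimate alternative, but your sketch contains a confusion that would need to be repaired before it can be completed. You write that the $\p_+$-component of $\kappa$ ``contributes trivially because of the symmetry of the ${\sf P}$-tensor enforced by the normalization $\partial^*\kappa=0$.'' These are different objects: the ${\sf P}$-tensor is a component of the Cartan connection relative to a \emph{chosen} Weyl structure (a section of $T^*N\otimes T^*N$), and its symmetry is equivalent to unimodularity of the associated Oproiu connection (cf.\ Theorem~\ref{cotangent-example}(1)), not to the normalization. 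The $\p_+$-valued part of the curvature $\kappa$ is a Cotton--York type tensor, a 2-form on $N$ with values in $\p_+$. To make your argument work you would have to express $\dd\omega_{\sf W}$ on $M$ in terms of $\kappa$ (pulled back along the projection $\mathcal{G}\to M$), identify precisely which homogeneity components of $\kappa$ enter, and then argue from $\partial^*\kappa=0$ together with torsion-freeness that the relevant pieces vanish. This is essentially what \cite{CM} does, and is more delicate than your sketch suggests; the paper's choice to cite rather than reprove is the efficient route.
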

\begin{proof}
	Let $(N,Q)$ be an almost quaternionic manifold. This admits a canonical Cartan connection on the $P$-bundle $\mathcal{G}\rightarrow N$, which provides the trivialization
\[
T\mathcal{G}=\mathcal{G}\times \sl(n+1,\Hn)\,.
\]
We have $M=\mathcal{G}/G_0$ and
\[
	TM=\mathcal{G}\times_{G_0} \sl(n+1,\Hn)/\fr{g}_0=\mathcal{G} \times_{G_0} (\g_-\oplus \p_+)\,,
\]
where the equality does not depend on the Levi factor $G_0$. 
Therefore, as in Proposition \ref{quatertangentmodel}, we obtain an almost qs-H structure on $M$. The Cartan connection and the corresponding trivialization is preserved by quaternionic automorphisms. Therefore, this construction is a functor from the category of almost quaternionic manifolds to the category of almost qs-H manifolds. Then, the second claim follows from the first one and \cite[Theorem 3.1]{CM}.
\end{proof}

Let us now illustrate Theorem \ref{weylstr} via an explicit example, by using an almost quaternionic manifold.

\begin{example}\label{jan1}
Consider $N=\R^8=(x_1,\dots,x_8)$ endowed with the quaternionic co-frame
\[
\nu=(\dd x_1,\dd x_2,\dd x_3,\dd x_4,\dd x_5+x_1 \dd x_2,\dd x_6,\dd x_7,\dd x_8)
\]
providing an isomorphism $T_{x}\R^8\to[\E\Hh]$, and let us denote by $Q$ the induced almost quaternionic structure on $N$.
Following the above construction, we obtain the following matrix of 1-forms on $N$ for the pullback of the Cartan connection along $\nu$:
\[
C:=\begin{pmatrix}
0&0& 0\\
\dd x_1+\dd x_2 i+\dd x_3 j+\dd x_4 k&0 &0 \\
\dd x_5+x_1 \dd x_2+\dd x_6 i+\dd x_7 j+\dd x_8 k& \frac16(2\dd x_2+ 2\dd x_1 i -\dd x_4 j+\dd x_3 k) & 0
\end{pmatrix}\,.
\]
Therefore, on the trivialization $N\times \exp(\fr{p}_+)=\R^8\times \fr{p}_+=(x_1,\dots,x_8,p_1,\dots,p_8)$, there is the following co-frame provided by the pullback of the Cartan connection along $\nu$ and the action of $\exp(\fr{p}_+)$:
{\tiny
\begin{gather*}
\begin{pmatrix}
1&-\bar{p}&-\bar{q}\\
0&1 &0 \\
0& 0 & 1
\end{pmatrix}(C+\dd \exp(\fr{p}_+))\begin{pmatrix}
1&\bar{p} &\bar{q}\\
0&1 &0 \\
0& 0 & 1
\end{pmatrix}
= 
\begin{pmatrix}
*&-(\bar{p}C_{21}+\bar{q}C_{31})\bar{p}-\bar{q}C_{32}+\dd \bar{p}& -(\bar{p}C_{21}+\bar{q}C_{31})\bar{q}+\dd\bar{q}\\
C_{21}&* &* \\
C_{31}& * & *
\end{pmatrix}
\end{gather*}}
where $\bar{p}=p_1-p_2 i-p_3 j-p_4 k$ and $\bar{q}= p_5-p_6 i-p_7 j-p_8 k.$

By using the above expressions, we can now provide the quaternionic Darboux basis (see \cite[\darbas]{CGWPartI}) via the isomorphism 
{\small\[
T(N\times \exp(\fr{p}_+))\to \Hn^4: (C_{21},C_{31},\dd p-(p\bar{C}_{21}+q\bar{C}_{31})p-q\bar{C}_{32}, \dd q -(p\bar{C}_{21}+q\bar{C}_{31})q)\,.
\]}
In the quaternionic Darboux basis the almost quaternionic structure $Q$ corresponds to right multiplication by $-i,-j,-k$, and the scalar 2-form is expressed by
\begin{eqnarray*}
\omega&=&-2p_5\frac13 \dd x_1\wedge \dd x_2+\frac16p_8 x_1\wedge \dd x_3 -\frac16p_7 x_1\wedge \dd x_4 +x_1\wedge \dd p_1+\frac16p_7 \dd x_2\wedge \dd x_3\\
&&+\frac16p_8 \dd x_2\wedge \dd x_4+\dd x_2\wedge \dd p_2+x_1 \dd x_2\wedge \dd p_5-\frac13p_5\dd x_3\wedge \dd x_4+\dd x_3\wedge \dd p_3\\
&&+\dd x_4\wedge \dd p_4+\dd x_5\wedge \dd p_5+\dd x_6\wedge \dd p_6+\dd x_7\wedge \dd p_7+\dd x_8\wedge \dd p_8\,.
\end{eqnarray*}
Let us point out that $\dd \omega\neq 0$ and that the quaternionic structure is not torsion-free, i.e., $T^{Q}\neq 0$. However, the (intrinsic) torsion of this example is too complicated to be discussed in full details here.
\end{example}

\subsection{The use of the cotangent bundle}
Let us now proceed with the second approach based on the cotangent bundle, and the observation that 
\[
\mathcal{G}_0\times_{G_0} \exp(\p_+)
\]
is naturally diffeomorphic to $T^*N$. Locally, this diffeomorphism is given by the map 
\[
{\sf t}: N\times \exp(\p_+)\longrightarrow T^*N\,,\quad {\sf t}(x, \exp(Z))= (x,\nu^i(x)Z_i)\,,
\]
for the local co-frame $\nu$. Therefore, 
\[
\mathcal{G} \times_{G_0} (\g_-\oplus \p_+)=TM\cong TT^*N\,,
\]
where the second isomorphism above depends only on the Oproiu connection $\nabla$, corresponding to the almost quaternionic structure $Q$ induced by $\nu$. In other words, we have two (almost) symplectic forms on $TT^*N$:
\begin{enumerate}
\item the canonical symplectic form $\omega_{\sf C}$ on $T^*N$,
\item the pullback $\omega_{\sf W, \nabla}={\sf t}^*\omega_{\sf{W}}$ on $T^*N$ of the almost symplectic form $\omega_{\sf{W}}$ on $M$ provided by $\nabla$.
\end{enumerate}
Moreover, we have two (almost) quaternionic structures:
\begin{enumerate}
\item[(a)] the quaternionic structure $Q_{\hor}$ induced by the horizontal distribution of $\nabla$ on $T^*N$,
\item[(b)] the pullback $Q_{\pul}={\sf t}^*Q_{\sf W}$ on $T^*N$ of the almost quaternionic structure $Q_{\sf W}$ on $M$ provided by $\nabla$.
\end{enumerate}
We obtain the following compatibility statements.

\begin{theorem}\label{cotangent-example}
Under the above assumptions the following claims hold:
\begin{enumerate}
\item[${\sf 1)}$] The (almost) symplectic forms $\omega_{\sf C}$ and $\omega_{{\sf W},\nabla}$ on $T^*N$ coincide, if and only if $(N,Q)$ is quaternionic and the Oproiu connection $\nabla$ is unimodular. The unimodularity condition is equivalent to say that $\nabla$ preserves some volume form and the corresponding ${\sf P}$-tensor is symmetric. 
\item[${\sf 2)}$] The (almost) quaternionic structures $Q_{\hor}$ and $Q_{\pul}$ on $T^*N$ coincide, if and only if ${\mbox{\sf P}}$ has values in $[\E\E]^*.$
\item[${\sf 3)}$] The pair $(Q_{\pul},\omega_{W,\nabla})$ defines an almost quaternionic skew-Hermitian structure on $T^*N.$
\item[${\sf 4)}$] The pair $(Q_{\hor},\omega_{W,\nabla})$ defines an almost quaternionic skew-Hermitian structure on $T^*N$, if and only if
\[
0={\mbox{\sf P}}(X,Y)-{\mbox{\sf P}}(Y,X) +{\mbox{\sf P}}(\J Y, \J X) -{\mbox{\sf P}}(\J X,\J Y)\,, 
\]
for all $X,Y\in \Gamma(TN)$ and $\J\in \Gamma(Z)$. This is equivalent to say that either $\nabla$ is unimodular, or ${{\sf P}}$ has values in $[\E\E]^*.$
\item[${\sf 5)}$] The pair $(Q_{\pul},\omega_{C})$ defines a smooth almost quaternionic skew-Hermitian structure on $T^*N$, if and only if $(N,Q)$ is quaternionic and either $Q_{\pul}=Q_{\hor}$, or $\omega_{C}=\omega_{W,\nabla}$.
\item[${\sf 6)}$] The pair $(Q_{\hor},\omega_{C})$ defines a smooth almost quaternionic skew-Hermitian structure on $T^*N$, if and only if $(N,Q)$ is a quaternionic manifold.
\end{enumerate}
\end{theorem}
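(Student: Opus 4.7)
The plan is to work in the local trivialization $T^*N\cong N\times\exp(\fr{p}_+)$ pulled back along a local quaternionic co-frame $\nu$, and to express all four objects---$\omega_{\sf C}$, $\omega_{W,\nabla}$, $Q_{\pul}$, $Q_{\hor}$---in terms of the local components $(\nu,\Gamma,\gamma,{\sf P})$ of the normal Cartan connection attached to $(N,Q)$, following the computation carried out in Example \ref{jan1}. Writing $\{X_i\},\{Z^i\}$ for dual bases of $\fr{g}_-$ and $\fr{p}_+$ and $(x^i,p_i)$ for the associated coordinates on $T^*N$, the canonical 1-form is $p_i\nu^i$, whence
\[
\omega_{\sf C}=\dd p_i\wedge \nu^i+p_i\,\dd\nu^i,
\]
while pairing the $\fr{g}_-$- and $\fr{p}_+$-components of the Cartan connection at the point $(x,\exp(p_iZ^i))$ yields a formula of the shape
\[
\omega_{W,\nabla}=\dd p_i\wedge \nu^i+p_i\cdot(\Gamma\text{-terms})+{\sf P}_{ij}\,\nu^i\wedge \nu^j,
\]
whose two correction terms encode, respectively, the torsion of $\nabla$ and the ${\sf P}$-tensor.

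Claim (1) then reduces to cancelling both corrections separately: the skew part of ${\sf P}$, which by the unimodularity discussion preceding the theorem is the obstruction to $\nabla$ preserving a volume form, kills the $\nu^i\wedge\nu^j$ summand, and the vanishing of the quaternionic torsion $T^Q$ kills the $\Gamma$-part. For claim (2), $Q_{\pul}$ acts block-diagonally on $\fr{g}_-\oplus\fr{p}_+$ via $\fr{g}_0$ and is then pulled back by ${\sf t}$, whereas $Q_{\hor}$ is defined through the horizontal lift of $Q$ along $\nabla$; the two lifts agree on the horizontal part by construction, and on the vertical part they differ by a tensor built from ${\sf P}(X,\J\cdot)-{\sf P}(\J X,\cdot)$, which vanishes identically if and only if ${\sf P}$ takes values in the $\fr{sp}(1)$-trivial submodule $[\E\E]^*\subset T^*N\otimes T^*N$.

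Claim (3) follows at once by pulling back, along ${\sf t}$, the qs-H structure $(Q_{\sf W},\omega_{\sf W})$ on $M=\mc{G}/G_0$ provided by Theorem \ref{weylstr}. For claim (4), demanding $\omega_{W,\nabla}(\J X,\J Y)=\omega_{W,\nabla}(X,Y)$ for every section $\J$ of $Q_{\hor}$ and using (2) to measure the defect between $Q_{\hor}$ and $Q_{\pul}$ converts the compatibility question into precisely the identity on ${\sf P}$ stated in the theorem; branching ${\sf P}\in S^2[\E\Hh]^*\oplus\Lambda^2[\E\Hh]^*$ into its $\fr{sp}(1)$-isotypical pieces then shows that this identity holds iff either ${\sf P}$ is symmetric ($\nabla$ unimodular) or ${\sf P}\in[\E\E]^*$. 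Claims (5) and (6) follow by combining (1)--(4): for (5), compatibility of $Q_{\pul}$ with $\omega_{\sf C}$ forces the difference $\omega_{\sf C}-\omega_{W,\nabla}$ to lie in the $Q_{\pul}$-invariant part, which by (1) together with (2)--(3) is possible only when either $\omega_{\sf C}=\omega_{W,\nabla}$ (unimodularity of $\nabla$) or $Q_{\pul}=Q_{\hor}$ (${\sf P}\in[\E\E]^*$), and in either case $(N,Q)$ is forced to be quaternionic; for (6), $\omega_{\sf C}$ is automatically compatible with the horizontal lift of any genuine quaternionic structure, and $Q_{\hor}$ is an almost quaternionic structure on $T^*N$ precisely when $T^Q=0$.

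The main obstacle will be tracking the ${\sf P}$-dependent corrections carefully enough to separate the symmetric and antisymmetric parts of ${\sf P}$ and to identify its $\fr{sp}(1)$-isotypical components on the nose; in particular, the representation-theoretic step in claim (4)---rewriting the compatibility identity as the union of the unimodular and the $[\E\E]^*$-valued loci---is the delicate calculation on which (5) and (6) ultimately depend.
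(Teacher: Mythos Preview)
Your strategy---work in local coordinates on $T^*N\cong N\times\exp(\fr{p}_+)$ and compare the two horizontal distributions---is exactly the paper's. The paper computes the pushforward of $\mc{G}\times_{G_0}\fr{g}_-$ as
\[
\partial_{x^i}+{\sf P}_{ij}\,\partial_{p_j}+\theta\bigl(\nabla^{\frac12 p}_{\partial_{x^i}}\partial_{x^k}\bigr)\partial_{p_k},
\]
where $\nabla^{\frac12 p}$ is the Oproiu connection shifted by the one-form $\tfrac12 p_i\dd x^i$. Your schematic formula for $\omega_{W,\nabla}$ drops the quadratic-in-$p$ contribution coming from the $Z_k\nu^k Z_j$ piece of $\Ad(\exp Z)^{-1}$; the paper absorbs it into $\nabla^{\frac12 p}$, and since all Oproiu connections share the same torsion this does not affect (1). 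Claims (2)--(4) in your sketch are then essentially the paper's argument.

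There are, however, two genuine gaps. In (5), the difference $\omega_{\sf C}-\omega_{W,\nabla}$ on horizontal vectors is $\theta(T(X,Y))+{\sf P}(X,Y)-{\sf P}(Y,X)$, and demanding that it be $Q_{\pul}$-Hermitian forces in particular $\theta(T(\J X,\J Y))=\theta(T(X,Y))$. The key step---missing from your outline---is the module-theoretic fact that the quaternionic intrinsic-torsion module contains \emph{no nonzero element} that is $Q$-Hermitian in its two skew entries, so $T^Q=0$ is forced \emph{independently} of any condition on ${\sf P}$. Your sentence ``in either case $(N,Q)$ is forced to be quaternionic'' fails on the branch $Q_{\pul}=Q_{\hor}$, since ${\sf P}\in[\E\E]^*$ alone says nothing about the torsion. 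In (6), the assertion ``$Q_{\hor}$ is an almost quaternionic structure on $T^*N$ precisely when $T^Q=0$'' is simply false: $Q_{\hor}$ is the block-diagonal lift of $Q$ along the $\nabla$-splitting $TT^*N\cong H\oplus V$ and is \emph{always} an almost quaternionic structure, regardless of $T^Q$. What is at stake in (6) is whether $\omega_{\sf C}$ is $Q_{\hor}$-Hermitian. The paper's argument is that the ${\sf P}$-defect from (4) and the ${\sf P}$-part of $\omega_{\sf C}-\omega_{W,\nabla}$ cancel exactly, leaving only the torsion term $\theta(T(X,Y))$, which again forces $T^Q=0$ by the same module fact; your proposal does not carry out this cancellation.
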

\begin{proof}
Let us choose a local quaternionic frame $\nu$. Then, the isomorphism $T_{(x,\exp(Z))}M\cong\g_-\oplus \p_+$ has the form 
\[
\Ad(\exp(Z))^{-1}\left(\begin{smallmatrix}
\gamma_i\nu^i& {\mbox{\sf P}}_i\nu^i\\
\nu& \Gamma_i\nu^i
\end{smallmatrix}\right)=\left(\begin{smallmatrix}
\gamma_i\nu^i-Z_i\nu^i& {\mbox{\sf P}}_i\nu^i+\gamma_i\nu^iZ-Z_j \Gamma^j_i\nu^i-Z_i\nu^i Z \\
\nu& \Gamma_i\nu^i+\nu Z
\end{smallmatrix}\right)\,.
\] 
Let $(x^i)$ be local coordinates on $N$ and $(x^i,p_i)$ be the induced coordinates on $T^\ast N$. Then $\nu^i=\nu^i_j\dd x^j$ and thus thus we deduce that $N\times \exp(\p_+)\longrightarrow T^*N$ has the form
\[
(x^i,Z_i)\longmapsto (x^i,\nu_i^jZ_j)\,.
\]
Thus, 
\[
\partial_{x_i}-\sum_j({\mbox{\sf P}}_{kj}\nu^k+\gamma_k\nu^kZ_j-Z_l \Gamma^l_{kj}\nu^k-Z_k\nu^k Z_j)\partial_{Z_j}
\]
are the generators of $\mathcal{G} \times_{G_0} \g_-$ which we want to push forward to $T^*N$. The pushforward of $\partial_{x^i}$ is given by $\partial_{x^i}+\sum_j\frac{\partial \nu^k_j}{\partial x^i}Z_k\partial_{p_j}$ and the pushforward of $\partial_{Z_i}$ is equal to $\sum_j\nu^i_j\partial_{p_j}$. Note also that $Z_i=(\nu^{-1})_i^jp_j$. Altogether, for the pushforward of $\mathcal{G} \times_H \g_-$ we compute
\[
\partial_{x^i}+\sum_j\frac{\partial \nu^k_j}{\partial x^i}(\nu^{-1})_k^lp_l\partial_{p_j}-\sum_l({\mbox{\sf P}}_{kj}\nu^k_i+\gamma_k\nu^k_i(\nu^{-1})_j^lp_l-(\nu^{-1})_l^ap_a \Gamma^l_{kj}\nu^k_i-(\nu^{-1})_l^ap_a\nu^l_i (\nu^{-1})_j^kp_k)\nu^j_l\partial_{p_l}\,.
\]
Let us now use the same notation for the ${{\sf P}}$-tensor in the $(x^i,p_i)$ coordinates. Then, the above expression can be rewritten as 
\[
\partial_{x^i}-\sum_j{\mbox{\sf P}}_{ij}\partial_{p_j}+\theta(\nabla^{\frac12p_i}_{\partial_{x^i}} \partial_{x^k})\partial_{p_k}\,,
\]
where $\nabla^{\frac12p_i}$ is the Oproiu connection differing by $\frac12p_i$ (viewed as 1-form) from $\nabla$, and $\theta = p_i \dd x^i$ is the Liouville form on $TT^*N$. The precise formula for the change of Oproiu connection by a 1-form follows the conventions of \cite[Section 5.1.6]{CS}.

\noindent Let us also mention that the horizontal subspace corresponding to $\nabla$ is spanned by vector fields of the form
\[
\partial_{x^i}+\theta(\nabla_{\partial_{x^i}} \partial_{x^k})\partial_{p_k}.
\] 
We are ready now to prove the claims of the theorem. For the first claim, we compare the horizontal subspaces with the canonical symplectic form 
\[
\omega_{\sf{C}} = -\dd\theta = \dd x^i \wedge \dd p_i\,.
\]
We conclude that $\omega_{\sf{C}}=\omega_{{\sf W},\nabla}$, if and only both $\nabla$ is torsion-free (and thus also $\nabla^{\frac12p_i}$) and moreover the ${{\sf P}}$-tensor is symmetric, that is $(N,Q)$ is a unimodular quaternionic manifold. \\
For the second claim, one should compare the horizontal subspaces one to each other, and conclude that the ${{\sf P}}$-tensor provides all the difference between the quaternionic structures $Q_{\hor}$ and $Q_{\pul}$. Consequently, the second claim follows because elements of $[\E\E]^*$ are quaternionic linear. Now, the third claim clearly holds. 

For the fourth claim, set
\[
\tilde \J(X):=\J(X)+{\mbox{\sf P}}(\J X)-\J {\mbox{\sf P}}(X)\,,
\]
where $X\in \Gamma(TM)$ and $\J\in \Gamma(Q_{\pul})$ satisfies $\J^2=-\id_{TM}$. Then, $\tilde \J$
is the corresponding section of the quaternionic structure $Q_{\hor}$. To check that the almost symplectic form $\omega_{\sf{W}}$ is scalar, it suffices to check that 
\[
\omega_{\sf W}(\tilde \J (X),\tilde \J (Y))=0\,,
\]
for all $X,Y\in \Gamma(TN)$ and $\J\in \Gamma(Q_{\pul}),\J^2=-\id$. It is simple computation that this is equivalent to the claimed condition, because we know the possible irreducible submodules of $[\E\Hh\E\Hh]^*$ (this argument follows by \cite[\modules]{CGWPartI}). \\
The fifth claim occurs directly by the difference $\theta(T(X, Y))+{{\sf P}}(X, Y)-{{\sf P}}(Y, X)$ between the (almost) symplectic structures $\omega_{\sf C}$ and $\omega_{{\sf W},\nabla}$, where $T$ denotes the torsion of $\nabla$. This is because the relation
\[
\theta(T(X, Y))=\theta(T(\J X, \J Y))
\]
can be satisfied only if the torsion vanishes, i.e., $T=0$ (since this component of torsion is not $Q$-Hermitian in these two entries).
Finally, combining the fourth and the fifth claim, the ${{\sf P}}$-parts cancel each other and the sixth claim follows. This completes our proof.
\end{proof}
Let us now illustrate the situation of the above theorem via an example. 
\begin{example}\label{jan2}
We begin with $N=\R^8=(x_1,\dots,x_8)$ and the quaternionic co-frame 
\[
\nu:=(\dd x_1,\dd x_2,\dd x_3,\dd x_4,\dd x_5,\dd x_6,\dd x_7,\dd x_8)\,.
\]
This induces a flat quaternionic structure, and has the following matrix of 1-forms on $N$ as the pullback of the Cartan connection along $\nu$:
\[
C:=\begin{pmatrix}
0&0& 0\\
\dd x_1+\dd x_2 i+\dd x_3 j+\dd x_4 k&0 &0 \\
\dd x_5+\dd x_6 i+\dd x_7 j+\dd x_8 k& 0 & 0
\end{pmatrix}\,.
\]
This selects a flat Oproiu connection $\nabla$. If $\psi: N\to \exp(\fr{p}_+)$ is the function corresponding to the change to another Oproiu connection $\nabla^{\psi}$, then the pullback of the Cartan connection along $\nu\cdot \psi$ becomes
\[
C_\psi:=\Ad(\psi)^{-1}C+\psi^*\mu\,.
\]
Here, $\Ad : G\to\Aut(\fr{g})$ is the adjoint representation of $G={\sf P}\Gl(3,\Hn)$, and $\psi^*\mu$ is the left logarithmic derivative of $\psi: N\to \exp(\fr{p}_+)$, i.e., the pullback of the Maurer-Cartan form
\[
\mu:=(\dd p_1-\dd p_2 i-\dd p_3 j-\dd p_4 k,\dd p_5-\dd p_6 i-\dd p_7 j-\dd p_8 k)
\] to $N$.
Nevertheless, in such a trivialization we obtain
\[
N\times \exp(\fr{p}_+)=\R^8\times \fr{p}_+=(x_1,\dots,x_8,p_1,\dots,p_8)\,,
\] 
and in this case the coordinates $(x_1,\dots,x_8,p_1,\dots,p_8)$ coincide with the coordinates on $T^*N$.
Therefore, similarly to Example \ref{jan1}, we introduce the variables $p,q$, and work with
{\tiny
\begin{gather*}
\begin{pmatrix}
1&-\bar{p}&-\bar{q}\\
0&1 &0 \\
0& 0 & 1
\end{pmatrix}(C_\psi+\dd \exp(\fr{p}_+))\begin{pmatrix}
1&\bar{p} &\bar{q}\\
0&1 &0 \\
0& 0 & 1
\end{pmatrix}
= 
\begin{pmatrix}
*&\dd \bar{r}-(\bar{r}C_{21}+\bar{s}C_{31})\bar{r}& \dd \bar{s}-(\bar{r}C_{21}+\bar{s}C_{31})\bar{s}\\
C_{21}&* &* \\
C_{31}&* &* 
\end{pmatrix}\,,
\end{gather*}}
where $\bar{r}=\bar{p}+\psi_1-\psi_2 i-\psi_3 j-\psi_4 k$, $\bar{s}=\bar{q}+\psi_5-\psi_6 i-\psi_7 j-\psi_8 k$.

Then, according to Theorem \ref{cotangent-example}, the pair $(Q_{\pul},\omega_{W,\nabla})$ provides an almost qs-H structure on $T^*N$, which is induced by the quaternionic Darboux basis by using the following isomorphism:
\[
T(N\times \exp(\fr{p}_+))\to \Hn^4: (C_{21},C_{31},\dd r-(r\bar{C}_{21}+s\bar{C}_{31})r, \dd s-(r\bar{C}_{21}+s\bar{C}_{31})s)\,.
\]
In this basis the quaternionic structure $Q$ corresponds to right multiplication by $-i,-j,-k$, and the scalar 2-form takes the standard form.
Since this almost qs-H structure is locally isomorphic to the qs-H structure from Proposition \ref{quatertangentmodel}, we can conclude that this is a torsion-free qs-H structure.
Moreover, and according to Theorem \ref{cotangent-example}, the pair $(Q_{\hor},\omega_{C})$ is an almost qs-H structure on $T^*N$, which is induced by the quaternionic Darboux basis via the isomorphism 
\[
T(N\times \exp(\fr{p}_+))\to \Hn^4: (C_{21},C_{31},\dd p-2p(r-p)\bar{C}_{21}-(qr+ps-2pq)\bar{C}_{31},\dd q-(qr+ps-2pq)\bar{C}_{21}-2q(s-q)\bar{C}_{31})\,.
\]
Finally, note that ${{\sf P}}_1=\dd(\bar{r}-\bar{p})-((\bar{r}-\bar{p})C_{21}+(\bar{s}-\bar{q})C_{31})(\bar{r}-\bar{p})$ and ${{\sf P}}_2=\dd(\bar{s}-\bar{q})-((\bar{r}-\bar{p})C_{21}+(\bar{s}-\bar{q})C_{31})(\bar{s}-\bar{q})$. Hence, if the tensors ${\sf P}_i$ $(i=1, 2)$ satisfy the conditions of the Theorem \ref{cotangent-example}, then the pair $(Q_{\hor},\omega_{W,\nabla})$ should be an almost qs-H structure on $T^*N$. We avoid presenting the corresponding quaternionic Darboux basis, since it has a long and complicated expression.
\end{example}

%%%%%%%%%%%%%%%%%%%%%%%%%%%%%%%%%%%%%%%%%%%%%%%%%%%%%
%%%%%%%%%%%%%%%%%%%%%%%%%%%%%%%%%%%%%%%%%%%%%%%%%%%%%
%%%%%%%%%%%%%%%%%%%%%%%%%%%%%%%%%%%%%%%%%%%%%%%%%%%%%
%%%%%%%%%%%%%%%%%%%%%%%%%%%%%%%%%%%%%%%%%%%%%%%%%%%%%
%%%%%%%%%%%%%%%%%%section6%%%%%%%%%%%%%%%%%%%%%%%%%

\section{Further directions and open problems}\label{opentasks}
In this final section we pose some questions and open problems related to the geometry of $\SO^*(2n)$- and $\SO^*(2n)\Sp(1)$-structures. Some of them are studied in the third part of this series of works. 
\begin{itemize}
\item[${\sf I)}$] Examine curvature invariants, Bianchi identities, Ricci-type and other curvature types of $\SO^*(2n)$- and $\SO^*(2n)\Sp(1)$-structures. 
\item[${\sf II)}$] Realization of the pure algebraic types $\mc{X}_{1}, \ldots, \mc{X}_5$ of $\SO^*(2n)\Sp(1)$-structures, as well of the $\Sp(1)$-invariant types $\mc{X}_{1}, \ldots, \mc{X}_7$ of $\SO^*(2n)$-structures, specified in \cite{CGWPartI}. Are there any empty classes? Provide a classification of manifolds having some certain algebraic type (e.g. skew-torsion, vectorial type), or certain holonomy with respect to $\nabla^{H, \omega}$ or $\nabla^{Q, \omega}$, respectively. 
\item[${\sf III)}$] Provide a metric view point of $\SO^*(2n)$-structures: Define such structures in terms of the three metrics of signature $(2n, 2n)$, and application of the approach discussed in Section \ref{sec2II} for the Levi-Civita connection corresponding to one of these metrics. Study the associated pseudo-Riemannian Dirac operators, see also problem ${\sf X)}$.
\item[${\sf IV)}$] Description of adapted twistor constructions (as an analogue of the twistor constructions in almost hH/qH geometries, see \cite{Salamon82, Swann91}). Relate the given integrability conditions to such a description. 
\item[${\sf V)}$] Provide the characterization of the quaternionic skew-Hermitian geometries in the image of the functor from Theorem \ref{weylstr}. 
\item[${\sf VI)}$] From the parabolic view point (see \cite{Cap14}) it makes sense to derive the explicit relations with quaternionic geometries. That is, analyze the $\SO^*(2n+2)$-orbits on the quaternionic projective space $\Hn{\sf P}^{n}=\Gl(n+1, \Hn)/P$ (note that the open orbit is the symmetric space $\SO^*(2n+2)/\SO^*(2n)\U(1)$ discussed in \cite[Section 5]{CGWPartI}), and interpret the parallel tractor scalar 2-forms. Investigate the corresponding first BGG operators and interpret the normal solutions in terms of a generalized Einstein condition.
\item[${\sf VII)}$] Quaternionic compactification of $\SO^*(2n)\U(1)$-structures and examination of the geometry on the boundary. In particular, what is the type of the parabolic geometry on the boundary?
\item[${\sf VIII)}$] Examine almost quaternion skew-Hermitian manifolds with large automorphism group, and
classify those for which the dimension of the group is close to the maximal dimension. 
\item[${\sf IX)}$] Classify non-symmetric homogeneous spaces $G/L$ admitting an invariant almost hypercomplex/quaternionic skew-Hermitian structure, with $G$ semisimple and $L$ reductive.
\item[${\sf X)}$] Provide a spinorial interpretation of $\SO^*(2n)$-structures and $\SO^*(2n)\Sp(1)$-structures, and introduce the associated spinorial calculus and adapted Dirac operators. Study metaplectic geometry and symplectic Dirac operators related with metaplectic structures on $8n$-dimensional quaternionic skew-Hermitian manifolds.
\end{itemize}

%%%%%%%%%%%%%%%%%%%%%%%%%%%%%%%%%%%%%%%%%%%%%%%%%%%%%
%%%%%%%%%%%%%%%%%%%%%%%%%%%%%%%%%%%%%%%%%%%%%%%%%%%%%
%%%%%%%%%%%%%%%%%%%%%%%%%%%%%%%%%%%%%%%%%%%%%%%%%%%%%
%%%%%%%%%%%%%%%%%%%%%%%%%%%%%%%%%%%%%%%%%%%%%%%%%%%%%
%%%%%%%%%%%%%%%%%%APPENDIX%%%%%%%%%%%%%%%%%%%%%%%%%
\appendix

\section{Topology of $\SO^*(2n)$- and $\SO^*(2n)\Sp(1)$-structures}\label{appendix2}
In this appendix we study some basic topological features of $\SO^*(2n)$- and $\SO^*(2n)\Sp(1)$-structures. To do so, we should recall first some covering theory related to $\SO^*(2n)$.
There are several distinguished Lie groups with Lie algebra $\so^*(2n)$, apart from $\SO^{*}(2n)$. Firstly, there is \textsf{the universal covering} $\widetilde{\SO}^{*}(2n)$ of $\SO^{*}(2n)$. This gives rise to the following short exact sequence
\[
0\longrightarrow \Z\longrightarrow\widetilde{\SO}^*(2n)\longrightarrow\SO^*(2n)\longrightarrow 0\,.
\]
Note that any representation of $\so^*(2n)$ integrates to representation of $\widetilde{\SO}^{*}(2n)$, which however for the finite dimensional case is {\it not} faithful. In other words, $\widetilde{\SO}^{*}(2n)$ has no finite-dimensional faithful representations. 
At this point we should however recall that there is a \textsf{maximal linear group} attached to the Lie algebra $\fr{so}^{*}(2n)$, also called the \textsf{linearizer} of $\widetilde{\SO}^{*}(2n)$ and denoted by $\Spin^{\ast}(2n)$ (see e.g. \cite[pp.~264, 320]{OnV} for this notion).\footnote{In \cite{OnV} the group $\SO^*(2n)$ is denoted by $\U^*_{n}(\Hn)$, see page 226.} This is characterized by the following universal property: 
\par {\it Any finite dimensional linear representation $\rho : \widetilde{\SO}^*(2n)\to\Aut(W)$ of $\widetilde{\SO}^*(2n)$ factors as $\rho = \rho_0 \circ \psi$, where $\psi :\widetilde{\SO}^*(2n)\to \Spin^{\ast}(2n)$ is a (covering) homomorphism and $\rho_0 : \Spin^{\ast}(2n)\to \Aut(W)$ is a linear representation of $\Spin^{\ast}(2n)$. That is, the following diagram commutes:}
\[
\xymatrix{
\widetilde{\SO}^*(2n) \ \ar[r]^{\psi} \ar[dr]_{\rho} & \Spin^*(2n) \ar[d]^{\rho_0} \\
& \Aut(W)\,.
}
\]
Based on the embedding $\SO^*(2n)\subset\SO(2n, 2n)$ one should view $\Spin^*(2n)$ as a subgroup of $\Spin(2n, 2n)$. In particular, 
the faithful representation of $\Spin^{*}(2n)$ is given by the direct sum 
\[
R(\pi_{n-1})\oplus R(\pi_{n}) 
\]
of the (finite-dimensional) half-spin representations of $\fr{so}^*(2n)$, which is the reason behind our notational convention for $\Spin^*(2n)$. 
In particular, the group $\Spin^*(2n)$ is a (double) covering of $\SO^{*}(2n)$, which induced the following short exact sequence
\[
0\longrightarrow \Z_2\longrightarrow\Spin^*(2n)\overset{\lambda}{\longrightarrow}\SO^*(2n)\longrightarrow 0\,,
\]
such that $\lambda_{*}^{-1}(\fr{spin}^{*}(2n))=\fr{so}^*(2n)$. 

In small dimensions $n\leq 4$ one may describe further Lie algebras isomorphisms, which we list below and which can be easily interpreted in terms of Satake diagrams (see \cite{Hel, Harvey, OnV} for more details).
For $n=1$, $\SO^{\ast}(2)$ is isomorphic to $\SO(2)=\U(1)$, hence it is compact and non-simple.
For $n=2, 3, 4$, the corresponding Satake diagrams admit the following illustration (see \cite{Oni, CGWPartI})
\[
\begin{picture}(30,15)(-8,-12)
\put(0, 0){\circle*{4}}
\put(0,8){\makebox(0,0){{\tiny$\Lambda_1$}}}
\put(18, 0){\circle{4}}
\put(18.5,8){\makebox(0,0){{\tiny$\Lambda_2$}}}
\end{picture}
\quad
\begin{picture}(60, 20)(-20, -12)
\put(50, 1){\line(2,1){10.5}}
\put(50, -1){\line(2,-1){10.5}}
\put(48, 0){\circle*{4}}
\put(48,-7){\makebox(0,0){{\tiny$\Lambda_{1}$}}}
\put(62.5, -7){\circle{4}}
\put(70, 13){\makebox(0,0){{\tiny$\Lambda_{2}$}}}
\put(66.5, -10){{\tiny$\Lambda_{3}$}}
\put(62.5, 7){\circle{4}}
\put(63, -5){\vector(0,1){9.5}}
\put(63, -4){\vector(0, -1){1.5}}
\end{picture}\quad 
\begin{picture}(120,34)(-15,-12)
\put(82, 0){\circle*{4}}
\put(84, 0){\line(1,0){12}}
\put(100, 1){\line(2,1){10.5}}
\put(100, -1){\line(2,-1){11}}
\put(98, 0){\circle{4}}
\put(98,-7){\makebox(0,0){{\tiny$\Lambda_{2}$}}}
\put(85.5, 8){\makebox(0,0){{\tiny{$\Lambda_{1}$}}}}
\put(112.5, -7){\circle{4}}
\put(120, 13){\makebox(0,0){{\tiny$\Lambda_{3}$}}}
\put(111, -16){{\tiny$\Lambda_{4}$}}
\put(112.5, 7){\circle*{4}}
\end{picture} 
\]
Hence:
\begin{itemize}
\item For $n=2$ there is a Lie algebra isomorphism $\fr{so}^{\ast}(4)\cong\fr{su}(2)\oplus\fr{sl}(2, \R)$. Moreover, $\Spin^*(4)$ coincides with the Lie group $\SU(2) \times \Sl(2, \R)$ and we have a double covering
$
\SU(2) \times \Sl(2, \R) \to \SO^{\ast}(4).
$
Hence $\SO^*(4)$ is semisimple and non-simple.
\item For $n=3$, there is a Lie algebra isomorphism $\fr{so}^{\ast}(6)\cong\fr{su}(1,3)$. Since $Z(\SO^{\ast}(6))=\Z_{2}$ and $Z(\SU(1, 3))=\Z_{4}$, we get that $\Spin^*(6)$ coincides with the Lie group $\SU(1,3)$ and the map
$
\SU(1,3) \to \SO^{\ast}(6) 
$ defines a double covering.
\item For $n=4$, there is a Lie algebra isomorphism $\fr{so}^{\ast}(8)\cong\fr{so}(2,6)$. The half-spin groups associated to $\SO(2,6)$ are isomorphic to $\SO^\ast(8)$.
On the other side, $\Spin^*(8)$ and $\Spin(2, 6)$ are both maximal linear groups of $\fr{so}^*(8)$ and hence are identical. This means that there are covering homomorphisms $\Spin(2,6) \rightarrow \SO^\ast(8) \rightarrow P\SO(2,6)$, where the latter group is the one which acts faithfully on the projectivization of $\R^8$. In particular, $Z(\Spin^*(8))=\Z_2\times \Z_2$ (see \cite[p.~320]{OnV}). 
However, $\SO(2,6)$ is not isomorphic to $\SO^\ast(8)$ as Lie groups, since for example the have different maximal compact subgroups, i.e. $\SO(2)\SO(6)\ncong\U(4)$. Another argument is coming from topology: $\SO^*(8)$ is connected but $\SO(2,6)$ has two connected components. 
\end{itemize}

Next we discuss topological obstructions related to $\SO^*(2n)$-structures and $\SO^*(2n)\Sp(1)$-structures. 
We begin with the maximal compact subgroup $\U(n)$ of $\SO^*(2n)$, which can be viewed as the following block diagonal matrix
\[
\left\{\begin{pmatrix}
A & 0 \\
0 & A
\end{pmatrix} : A\in\U(n)\right\}\subset\U(2n)\,,
\]
where $\U(2n)$ is the maximal compact subgroup of $\Sp(4n,\R)$. This shows that $\U(n)$ does not act irreducibly on $\E=\C^{2n}$. However, $\Sp(1)$ coincides with the centralizer $C_{\U(2n)}(\U(n))$ and hence also lies inside $\U(2n)\subset\Sp(4n,\R)$. Thus, we we finally obtain the following necessary topological conditions, naturally arising within the theory of $\U(k)$-structures (see \cite[p.~200]{Liber} or \cite[p.~359]{CE14} and recall also that according to \cite[Problem 14-B]{Milnor}, the odd-degree Stiefel-Whitney classes of a complex vector bundle must vanish).
\begin{lem}
Let $M$ be a smooth $4n$-dimensional manifold admitting a $\SO^{\ast}(2n)\Sp(1)$-structure or a $\SO^{\ast}(2n)$-structure. Then all odd Stiefel-Whitney class $w_{2k+1}(M)\in H^{2k+1}(M; \Z_{2})$ must vanish, $w_{2k+1}(M)=0$, for any $k\in \mathbb{N}$, and the even one must have integral lifts.
\end{lem}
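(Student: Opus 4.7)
The plan is to reduce the structure group to a maximal compact subgroup sitting inside $\U(2n)$, and then invoke the classical relation between the Stiefel-Whitney classes of the underlying real bundle of a complex vector bundle and its Chern classes. First I would invoke the standard fact that any $G$-structure on $M$, for $G\subset \Gl(4n,\R)$ a closed Lie subgroup, reduces to its maximal compact subgroup $K\subset G$, because the quotient $G/K$ is diffeomorphic to a Euclidean space and so the associated fibre bundle $\mc{P}_G\times_G (G/K)\to M$ admits a global section. For $G=\SO^\ast(2n)$ the maximal compact is $K=\U(n)$ (embedded block-diagonally as in the paragraph preceding the lemma), while for $G=\SO^\ast(2n)\Sp(1)$ the maximal compact is $K=\U(n)\Sp(1)$, since $\Sp(1)$ centralizes $\U(n)$ inside $\U(2n)$.

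Second, I would use the two chains of inclusions
\[
\U(n)\subset \U(n)\Sp(1)\subset \U(2n)\subset \Sp(4n,\R)\subset \Gl(4n,\R)
\]
established in the paper to conclude that, in either case, the reduced structure group lies inside $\U(2n)$. Consequently the tangent bundle $TM$ acquires the structure of a complex vector bundle $E\to M$ of complex rank $2n$ whose underlying real bundle $E_{\R}$ coincides with $TM$.

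Third, I would apply the classical Whitney sum argument (Milnor--Stasheff, Problem~14-B): for any complex vector bundle $E$ of complex rank $r$ over $M$ one has
\[
w_{2k+1}(E_{\R})=0\,,\qquad w_{2k}(E_{\R}) = c_k(E) \bmod 2\,,
\]
for all $0\le k\le r$, where $c_k(E)\in H^{2k}(M;\Z)$ is the $k$-th Chern class. Specializing to $E_{\R}=TM$ produces exactly the two conclusions of the lemma: the odd Stiefel-Whitney classes vanish, and the even ones are the mod-$2$ reductions of the integral classes $c_k(TM)$, hence admit integral lifts.

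There is no real obstacle here; the only points requiring care are the identifications of the maximal compact subgroups of $\SO^\ast(2n)$ and $\SO^\ast(2n)\Sp(1)$ and the verification that their embeddings into $\Gl(4n,\R)$ factor through $\U(2n)$ compatibly with the complex structure used to define the Chern classes. Both are immediate from the explicit block-diagonal description recalled just before the lemma statement.
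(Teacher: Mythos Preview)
Your proposal is correct and follows essentially the same route as the paper: reduce to the maximal compact subgroup, observe that it sits inside $\U(2n)$ (the paper states this explicitly in the paragraph preceding the lemma), and then invoke the standard relation $w_{2k}(E_{\R})\equiv c_k(E)\bmod 2$ and $w_{2k+1}(E_{\R})=0$ from \cite[Problem 14-B]{Milnor}. The paper does not spell out the reduction-to-maximal-compact step or the formula for the even classes, so your write-up is in fact more detailed than the original, but the underlying argument is identical.
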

Thus, for instance, such manifolds should be oriented, that is $w_{1}(M)=0$, a fact which agrees with our observation in \cite{CGWPartI} and the orientation constructed via the volume form $\vol=\omega^{2n}$. Note however that the above condition is only necessary and not sufficient: For instance the tangent bundle $T\Ss^n$ of $\Ss^n$ always satisfies these conditions, but only $\Ss^2$ and $\Ss^6$ admit an almost complex structure, and in particular a $\U(n)$-structure. For the compact case one can pose further topological constraints related to the existence of an almost complex structure, which can be read for example in terms of the Euler characteristic of $M$ (see Theorem 3.4 in the appendix of \cite{Devito}).

\smallskip
Let us now fix an almost qs-H manifold $(M, Q, \omega)$ $(n>1)$, and denote by $\pi : \mc{Q}\to M$ the corresponding principal $\SO^*(2n)\Sp(1)$-bundle over $M$. Then, any point in $\mc{Q}$ provides an identification between $T_{x}M$ and $[\E\Hh]$, for any $x\in M$. Next we study the lifting problem of the $G=\SO^*(2n)\Sp(1)$-structure to an $\tilde{G}=\SO^*(2n)\times\Sp(1)$-structure that allows to associate global bundle analogies of the modules $\E$ and $\Hh$ over $M$. 

To do so, we view $\mc{Q}$ as an element of the first \v{C}ech cohomology group $H^{1}(M; G)$ with coefficients in the sheaf of smooth $G$-valued functions on $M$. Moreover, $\tilde{G}$ is the double cover of $G$ and so the short exact sequence 
\[
0\longrightarrow \Z_2 \longrightarrow \tilde{G}\longrightarrow G\to 0
\]
induces the coboundary homomorphism 
\[
\updelta : H^{1}(M; G)\to H^{2}(M; \Z_2)\,.
\]
By setting $\upepsilon:=\updelta(\mc{Q})\in H^{2}(M; \Z_2)$ 
we obtain a canonical cohomology class on $M$, which we will call the \textsf{Marchiafava-Romani class}. Then, as an analogue of the almost quaternionic-Hermitian case (see \cite{MR, Salamon82, Salamon86}), we deduce the following.
\begin{lem}\label{analog1}
The Marchiafava-Romani class $\upepsilon\in H^{2}(M; \Z_2)$ is precisely the obstruction to lifting $\mc{Q}$ to a principal $\tilde{G}$-bundle $\tilde{\mc{Q}}$ over $M$, or equivalently to the global existence of the vector bundles $E=\mc{Q}\times_{G}\E$ and $H=\mc{Q}\times_{G}\Hh$ over $M$. 
\end{lem}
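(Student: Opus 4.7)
The plan is to derive both equivalences from the non-abelian \v{C}ech cohomology long exact sequence attached to the central extension
\[
0\longrightarrow \Z_2 \longrightarrow \tilde{G}\overset{\pi}{\longrightarrow} G\longrightarrow 0\,,
\]
exactly in the spirit of the classical Marchiafava--Romani argument for $\Gl(n,\Hn)\Sp(1)$-structures in almost qH geometry.

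First I would exploit the fact that $\Z_2$ is central in $\tilde{G}$, which allows the nonabelian \v{C}ech sequence to be continued meaningfully to degree two, producing
\[
H^{1}(M;\Z_2)\longrightarrow H^{1}(M;\tilde{G})\overset{\pi_*}{\longrightarrow} H^{1}(M;G)\overset{\updelta}{\longrightarrow} H^{2}(M;\Z_2)\,.
\]
Exactness at $H^{1}(M;G)$ then gives immediately that the class of $\mc{Q}$ lies in the image of $\pi_*$ precisely when $\upepsilon=\updelta(\mc{Q})=0$; unwinding, this is the statement that $\mc{Q}$ admits a lift to a principal $\tilde{G}$-bundle $\tilde{\mc{Q}}$ whose associated $G$-bundle along the quotient $\pi$ recovers $\mc{Q}$.

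Next I would verify the equivalence with the global existence of $E$ and $H$. In the forward direction, given a lift $\tilde{\mc{Q}}$, composing the projections $\tilde{G}\to\SO^*(2n)$ and $\tilde{G}\to\Sp(1)$ with the standard representations yields well-defined smooth vector bundles $E=\tilde{\mc{Q}}\times_{\tilde{G}}\E$ and $H=\tilde{\mc{Q}}\times_{\tilde{G}}\Hh$; by construction $E\otimes H$ recovers $TM$ compatibly with the $\SO^*(2n)\Sp(1)$-structure. Conversely, if $E$ and $H$ exist globally with their required structure groups $\SO^*(2n)$ and $\Sp(1)$, then the fibre product $\mc{P}_E\times_M\mc{P}_H$ of their respective principal frame bundles carries a natural principal $\tilde{G}=\SO^*(2n)\times\Sp(1)$-action and projects to $\mc{Q}$ along the quotient by the central $\Z_2$, furnishing the required lift.

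The main obstacle, as I anticipate, will be the geometric identification of $\updelta(\mc{Q})$ with the cocycle measured by local frames. Concretely, choosing local admissible bases of $Q$ on an open cover $\{U_\al\}$ one obtains transition functions $g_{\al\be}\in G$, and any choice of local lifts $\tilde{g}_{\al\be}\in\tilde{G}$ on double overlaps produces a $\Z_2$-valued 2-cocycle $\tilde{g}_{\al\be}\tilde{g}_{\be\gamma}\tilde{g}_{\gamma\al}$ on triple overlaps; one must check that its class in $H^{2}(M;\Z_2)$ is independent of all choices and coincides with $\updelta(\mc{Q})$. Once this identification is in place, the geometric content --- that the $\Z_2$ ambiguity is precisely what obstructs choosing coherent local frames of $E$ and $H$ separately --- becomes transparent, and both stated equivalences follow without further work.
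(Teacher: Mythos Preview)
Your proposal is correct and follows exactly the classical Marchiafava--Romani argument that the paper invokes. In fact, the paper gives no explicit proof of this lemma at all: it simply states that ``as an analogue of the almost quaternionic-Hermitian case (see \cite{MR, Salamon82, Salamon86}), we deduce'' the result, leaving the details to the cited references. Your argument --- the non-abelian \v{C}ech sequence for the central $\Z_2$-extension, exactness at $H^1(M;G)$, and the equivalence with the global existence of $E$ and $H$ via the fibre product of frame bundles --- is precisely the standard proof behind those references, so you have supplied what the paper omits.
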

Thus, when $\upepsilon=0$, the bundles $E$ and $H$ are globally defined and thus
\[
T^{\C}M\cong E\otimes H\,.
\] 
Recall now that given a pair $(Q, \omega)$ as above, we may visualise the almost quaternionic structure $Q\subset\Ed(TM)$ via the coefficient bundle associated with the $\SO^*(2n)\Sp(1)$-representation $[S^2\Hh]^*$ (see for example \cite{Salamon86} or \cite[\invarena]{CGWPartI}). Thus, one can identify $\upepsilon$ with the second Stiefel-Whitney class of $Q$, i.e.
$\upepsilon=w_{2}(Q)$, see \cite{MR, Salamon82}. Moreover, in terms of the twistor bundle $Z=\mathbb{P}(\Hh)\to M$ which is naturally associated to the almost quaternionic structure $Q$, one obtains the following
\begin{prop}\label{saltop}
Let $(M, Q, \omega)$ be an almost qs-H manifold. Then, the second Stiefel-Whitney class $w_{2}(M)\in H^{2}(M; \Z_2)$ satisfies
\[
w_{2}(M)=
\left\{
\begin{tabular}{lc}
$\upepsilon$, & if \ $n=\text{odd}$,\\ 
$0$, & if \ $n=\text{even}$.
\end{tabular}\right.
\]
\end{prop}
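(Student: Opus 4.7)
My plan is to establish the universal identity $w_2^{\mathrm u} \equiv n\,\upepsilon^{\mathrm u}\pmod 2$ in $H^2(BG;\Z_2)$ with $G = \SO^*(2n)\Sp(1)$, and then pull back along the classifying map $f : M \to BG$ of the principal bundle $\mc{Q}$. Because $G$ has maximal compact $\U(n)\Sp(1) \subset \U(2n)\subset \Sp(4n,\R)$, the associated real tangent bundle of rank $4n$ acquires a canonical (up to homotopy) almost complex structure of complex rank $2n$, so that universally one has $w_2(TM) \equiv c_1(TM)\pmod 2$.

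I would then analyze the pullback to $B\tilde G$, where $\tilde G = \SO^*(2n)\times \Sp(1)$. The short exact sequence $1 \to \Z_2 \to \tilde G \to G \to 1$ induces a fibration $B\Z_2 \to B\tilde G \to BG$; since $G$ and $\tilde G$ are both connected, the low-degree Serre exact sequence reduces to
\[
0 \to \Z_2 \xrightarrow{\mathrm{tg}} H^2(BG;\Z_2) \longrightarrow H^2(B\tilde G;\Z_2),
\]
where $\upepsilon^{\mathrm u}$ is by construction the image of the generator under the transgression $\mathrm{tg}$. Hence $\ker\bigl(H^2(BG;\Z_2) \to H^2(B\tilde G;\Z_2)\bigr) = \langle \upepsilon^{\mathrm u}\rangle$. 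Over $B\tilde G\simeq B\U(n)\times B\Sp(1)$ the bundles $E,H$ from Lemma \ref{analog1} exist globally, and since $\U(n)\hookrightarrow \SO^*(2n)$ acts as $\mathrm{diag}(A,\bar A)$, one has $\E|_{\U(n)} = V_0 \oplus V_0^*$; a short tracking of the real structure on $\E\otimes\Hh$ then identifies $TM|_{B\tilde G} \cong (V_0\otimes \tilde H)_\R$, with $V_0$ and $\tilde H$ the standard $\U(n)$- and $\Sp(1)$-representations. Because $c_1(\tilde H)=0$ (since $\Sp(1)\subset \SL(2,\C)$), we compute
\[
c_1(TM)|_{B\tilde G} = 2\,c_1(V_0) + n\,c_1(\tilde H) = 2\,c_1(V_0)\equiv 0\pmod 2,
\]
so $w_2^{\mathrm u}|_{B\tilde G} = 0$ and therefore $w_2^{\mathrm u} = c\,\upepsilon^{\mathrm u}$ for some $c\in\{0,1\}$.

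To determine $c$, I would pull back via the inclusion $\U(2) \hookrightarrow G$ obtained by combining the central $\U(1) = \{zI_n\}\subset \U(n)\subset \SO^*(2n)$ with the $\Sp(1)$-factor; the element $(-1,-1)\in \U(1)\times\Sp(1)$ maps to the non-trivial central element $(-I,-1)\in \Z_2\subset \tilde G$, so that $(\U(1)\times\Sp(1))/\Z_2 = \U(2)$ genuinely sits inside $G$. Restricting $\E$ to the central $\U(1)$ yields $\C_1^{\oplus n}\oplus \C_{-1}^{\oplus n}$, and the same real-structure bookkeeping as above identifies $TM|_{\U(2)} \cong (\Theta^{\oplus n})_\R$, where $\Theta$ is the standard $\U(2)$-representation. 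Hence
\[
w_2(TM)|_{\U(2)} = c_1(\Theta^{\oplus n})\pmod 2 = n\,c_1(\Theta)\pmod 2.
\]
Meanwhile, $\upepsilon^{\mathrm u}|_{\U(2)}$ is the classical obstruction to lifting a $\U(2)$-structure to $\U(1)\times\SU(2)$—equivalently, to the existence of a square root of $\det \Theta$—so it equals $c_1(\Theta)\pmod 2$, which generates $H^2(B\U(2);\Z_2)\cong\Z_2$. Comparing forces $c\equiv n\pmod 2$, yielding $w_2^{\mathrm u} = n\,\upepsilon^{\mathrm u}$ universally; pulling back along $f$ gives the claim. The main obstacle I anticipate is the bookkeeping of the $\tilde G$-invariant real structure on $\E\otimes\Hh$ under the two restrictions to $\U(n)\times\Sp(1)$ and to $\U(2)$, which are direct but delicate linear-algebra computations using the explicit realization of $\U(n)$ as the maximal compact of $\SO^*(2n)$ acting as $\mathrm{diag}(A,\bar A)$ on $\E$.
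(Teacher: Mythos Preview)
Your approach is correct in outline, and the flagged ``delicate linear-algebra'' can indeed be carried out (your claim $\E|_{\U(n)}\cong V_0\oplus V_0^*$ is right, and is compatible with the paper's $A\mapsto\mathrm{diag}(A,A)$, which describes the action on the complex tangent space $\C^{2n}$ inside $\Sp(4n,\R)$ rather than on $\E$). It is, however, a genuinely different route from the paper's own argument.

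The paper's proof is a one-line reduction: the statement depends only on the underlying almost quaternionic structure $Q$, not on the scalar $2$-form $\omega$. Both $w_2(TM)$ and the Marchiafava--Romani class $\upepsilon=w_2(Q)$ are pulled back from the underlying $\Gl(n,\Hn)\Sp(1)$-structure (the double cover $\SO^*(2n)\times\Sp(1)\to\SO^*(2n)\Sp(1)$ being the restriction of $\Gl(n,\Hn)\times\Sp(1)\to\Gl(n,\Hn)\Sp(1)$). The result is therefore inherited verbatim from Salamon's computation in \cite{Salamon82} for quaternionic K\"ahler (more generally, almost quaternionic) manifolds, and the paper simply cites that.

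Your argument, by contrast, is a self-contained universal computation in $H^2\bigl(B(\SO^*(2n)\Sp(1));\Z_2\bigr)$: identify the kernel of restriction to $B(\SO^*(2n)\times\Sp(1))$ as $\langle\upepsilon^{\mathrm u}\rangle$ via the Serre five-term sequence, show $w_2^{\mathrm u}$ lies in it, and determine the coefficient by restricting to $\U(2)=(\U(1)\times\Sp(1))/\Z_2$. This buys independence from the literature and makes transparent why the parity of $n$ appears (as the multiplicity in $c_1(\Theta^{\oplus n})$). The paper's route buys brevity and the conceptual point that nothing new beyond the almost quaternionic case is needed.
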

\begin{proof}
The proof is the same with the one given in \cite{Salamon82}, for quaternionic K\"ahler manifolds, although the same result and proof applies for general almost quaternion Hermitian manifolds.
\end{proof}

As is well-known, on a $4n$-dimensional oriented manifold $M$ the vanishing of the second Stiefel-Whitney class $w_2$ guarantees the existence of \textsf{spin structures} (see for example \cite{LM}) and also of \textsf{metaplectic structures} (see for example \cite{Hab}).
	Thus, for $8n$-dimensional almost qs-H manifolds, Proposition \ref{saltop} certifies that such a manifold should admit these types of structures. In particular, by the inclusion $\SO^*(2n)\Sp(1)\subset\Sp(4n, \R)$ we conclude that the vanishing of $w_2(M)$ for $n=2m$, guarantees the reduction of the metaplectic structure to a certain 2-fold covering of $\SO^*(2n)\Sp(1)$. Such a (unique) 2-fold covering is given by $\Spin^*(2n)\Sp(1)$, and the corresponding lifts of the $G$-structure can been seen as a generalization of the so-called $\Spin^{q}$-structures, discussed in \cite{Nag95, Bar, Alben}.
We plan to examine $\Spin^*(2n)\Sp(1)$-structures on $8n$-dimensional almost qs-H manifolds $(M, Q, \omega)$ in a forthcoming paper in this series.

\end{document}